\newtheorem{thm}{Theorem}[section]
\newtheorem{lemma}[thm]{Lemma}
\newtheorem{cor}[thm]{Corollary}
\newtheorem{prop}[thm]{Proposition}
\newtheorem{obs}[thm]{Observation}
\newtheorem{conj}[thm]{Conjecture}
\newtheorem{qu}{Question}
\newtheorem{prob}{Problem}
\newtheorem*{Chernoff}{Chernoff's inequality}
\newtheorem*{Bey}{Bey's inequality}
\newtheorem*{RSWvac}{Vacant RSW Theorem}
\theoremstyle{definition}\newtheorem*{alg}{The Water Algorithm, $\A_W$}
\theoremstyle{definition}
\theoremstyle{definition}\newtheorem{defn}{Definition}
\newcommand{\ds}{\displaystyle}
\def\Ddots{\mathinner{\mkern1mu\raise\p@
\vbox{\kern7\p@\hbox{.}}\mkern2mu
\raise4\p@\hbox{.}\mkern2mu\raise7\p@\hbox{.}\mkern1mu}}
\def\eps{\Varepsilon}
\def\->{\rightarrow}
\newcommand{\1}{\mathbf{1}}
\newcommand{\E}{\mathbb{E}}
\newcommand{\N}{\mathbb{N}}
\newcommand{\ZZ}{\mathbb{Z}}
\def\A{\mathcal{A}}
\def\C{\mathcal{C}}
\def\F{\mathcal{F}}
\def\HH{\mathcal{H}}
\def\N{\mathcal{N}}
\def\P{\mathbb{P}}
\def\PP{\mathscr{P}}
\def\PB{\mathbf{P}}
\def\ExB{\mathbf{E}}
\def\VarB{\mathbf{Var}}
\def\AA{\mathbb{A}}
\def\Ex{\mathbb{E}}
\def\N{\mathbb{N}}
\def\Pr{\mathbb{P}}
\def\RR{\mathbb{R}}
\def\TT{\mathbb{T}}
\def\ZZ{\mathbb{Z}}
\def\le{\leqslant}
\def\ge{\geqslant}
\def\eps{\varepsilon}
\def\<{\langle}
\def\>{\rangle}
\def\Bin{\textup{Bin}}
\def\be{\begin{equation}}
\def\ee{\end{equation}}
\def\bea{\begin{equation*}}
\def\eea{\end{equation*}}
\def\Var{{\rm Var}}
\def\Cov{{\rm Cov}}
\def\Inf{{\rm Inf}}
\title{Noise Sensitivity in Continuum Percolation}
\author{Daniel Ahlberg}
\address{Department of Mathematical Sciences, University of Gothenburg, and Department of Mathematical Sciences, Chalmers University of Technology}\email{md1ahlda@chalmers.se}
\author{Erik Broman}
\address{Mathematical Sciences Chalmers University of Technology and Mathematical Sciences G\"oteborg University SE-41296 Gothenburg, Sweden} \email{broman@chalmers.se}
\author{Simon Griffiths}
\address{IMPA, Estrada Dona Castorina 110, Jardim Bot\^anico, Rio de Janeiro, RJ, Brasil} \email{sgriff@impa.br}
\author{Robert Morris}
\address{IMPA, Estrada Dona Castorina 110, Jardim Bot\^anico, Rio de Janeiro, RJ, Brasil} \email{rob@impa.br}
\thanks{Research supported in part by: (DA) The Royal Swedish Academy of Sciences; (EB) The G\"oran Gustafsson Foundation for Research in Natural Sciences and Medicine; (SG) CNPq bolsa PDJ; (RM) CNPq bolsa de Produtividade em Pesquisa.}
\keywords{Noise Sensitivity, Continuum Percolation, Gilbert model}
\begin{document}

\begin{abstract}
We prove that the Poisson Boolean model, also known as the Gilbert disc model, is noise sensitive at criticality. This is the first such result for a Continuum Percolation model, and the first which involves a percolation model with critical probability $p_c \ne 1/2$. Our proof uses a version of the Benjamini-Kalai-Schramm Theorem for biased product measures. A quantitative version of this result was recently proved by Keller and Kindler. We give a simple deduction of the non-quantitative result from the unbiased version. We also develop a quite general method of approximating Continuum Percolation models by discrete models with $p_c$ bounded away from zero; this method is based on an extremal result on non-uniform hypergraphs.
\end{abstract}

\maketitle

\section{Introduction}

The concept of noise sensitivity of a sequence of Boolean functions was introduced in 1999 by Benjamini, Kalai and Schramm~\cite{BKS}, and has since developed into one of the most exciting areas in Probability Theory, linking Percolation with Discrete Fourier Analysis and Combinatorics. 
So far, most attention has been focused on percolation crossings in two dimensions~\cite{BKS,GPS,SS},
either for bond percolation on the square lattice $\ZZ^2$, or for site percolation on the triangular lattice $\TT$. In this paper we study the corresponding questions in the setting of Continuum Percolation; in particular, we shall prove that the Poisson Boolean model, also known as the Gilbert disc model, is noise sensitive at criticality.

Roughly speaking, a sequence of Boolean functions $f_n \colon \{0,1\}^n \to \{0,1\}$ is said to be \emph{noise sensitive} if a slight perturbation of the state $\omega$ asymptotically causes all information about $f_n(\omega)$ to be lost. More precisely, let $\eps > 0$ and suppose that $\omega \in \{0,1\}^n$ is chosen uniformly at random. Define $\omega^\eps \in \{0,1\}^n$ to be the (random) state obtained by re-sampling each coordinate (independently and uniformly) with probability $\eps$, and note that $\omega^\eps$ is also a uniform element of $\{0,1\}^n$. Then the sequence $(f_n)_{n \ge 1}$ is said to be \emph{noise sensitive} (NS) if, for every $\eps > 0,$
\begin{equation}\label{def:NS}
\lim_{n \to \infty} \Ex\big[ f_{n}(\omega) f_{n}( \omega^{\eps}) \big] - \Ex\big[ f_{n}(\omega) \big]^2 \,=\, 0.
\end{equation}
For example, the Majority function ($f_n(\omega) = 1$ iff $\sum \omega_j > n/2$) and the Dictator function ($f_n(\omega) = 1$ iff $\omega_1 = 1$) are not noise sensitive, but the Parity function ($f_n(\omega) = 1$ iff $\sum \omega_j$ is even) is noise sensitive. One can easily see, using the Fourier representation of Section \ref{BKSsec}, that if~\eqref{def:NS} holds for some $\eps\in(0,1)$, then it holds for every $\eps \in (0,1)$.

Noise sensitivity was first defined by Benjamini, Kalai and Schramm~\cite{BKS}, who were partly motivated by the problem of exceptional times in dynamical percolation (see~\cite{Steif}). In this model, which was introduced independently by Benjamini (unpublished) and by H\"aggstr\"om, Peres and Steif~\cite{HPS1}, each bond in $\ZZ^2$ (or site in $\TT$) has a Poisson clock, and updates its state (again according to the uniform distribution) every time the clock rings. At any given time, the probability that there is an infinite component of open edges is zero (see~\cite{BR} or~\cite{Grim}, for example). However, there might still exist certain exceptional times at which such a component appears. Building on the work of~\cite{BKS}, Schramm and Steif~\cite{SS} were able to prove that, for the triangular lattice $\TT$, such exceptional times do exist, and moreover the Hausdorff dimension of the set of such times lies in $[1/6,31/36]$. Even stronger results were obtained by Garban, Pete and Schramm~\cite{GPS}, who were able to prove, via an extremely precise result on the Fourier spectrum of the `percolation crossing event', that the dimension of the exceptional set for $\TT$ is $31/36$, and that exceptional times also exist for bond percolation on $\ZZ^2$.

Following~\cite{BKS}, we shall study Boolean functions which encode `crossings' in percolation models. For example, consider bond percolation on $\ZZ^2$ at criticality (i.e., with $p = p_c = 1/2$), and let $f_N$ encode the event that there is a horizontal crossing of $R_N$, the $N \times N$ square centred at the origin, using only the open edges of the configuration. In other words, let $f_N \colon \{0,1\}^E \to \{0,1\}$, where $E$ is the set of edges of $\ZZ^2$ with an endpoint in $R_N$, be defined by $f_N(\omega) = 1$ if and only if there is such a crossing using only edges $e \in E$ with $\omega_e = 1$. In~\cite{BKS}, the authors proved that the sequence $(f_N)_{N \in \N}$ is noise sensitive.

Continuum Percolation describes the following family of random geometric graphs: first choose a countable random subset of $\RR^d$ according to some distribution, and then join two points with an edge in a deterministic way, based on their relative position. Two especially well-studied examples are Voronoi percolation (see~\cite{BS1,BRvor}), and the Poisson Boolean model, which was introduced by Gilbert in 1961~\cite{Gilbert}, and studied further in~\cite{Alex,BBW,BS2,MS,Roy}. In the latter model, a set $\eta$ of points in the plane $\RR^2$ are chosen according to a Poisson point process with intensity $\lambda$, and for each point $x \in \eta$, a closed disc of radius 1 is placed with its centre on $x$; let $D(\eta)$ denote the union of these discs. The model is said to \emph{percolate} if there exists an unbounded connected component in $D(\eta)$. It is well known that there exists a critical intensity $0 < \lambda_c < \infty$ such that if $\lambda < \lambda_c$ then the model a.s. does not percolate, while if $\lambda > \lambda_c$ it a.s. percolates. See the books~\cite{MR} and~\cite{BR} for a detailed introduction to Continuum Percolation.

We shall be interested in the problem of noise sensitivity of the Poisson Boolean model at criticality, that is, with $\lambda = \lambda_c$.  Let $f^G_N$ be the function, defined on countable, discrete subsets $\eta$ of the plane $\RR^2$, which encodes whether or not there is a horizontal crossing of $R_N$ using only points of $D(\eta) \cap R_N$. That is,
$$f^G_N(\eta) = 1 \quad \Leftrightarrow \quad H\big( \eta, R_N, \bullet \big) \textup{ occurs,}$$
where $H( \eta, R_N, \bullet)$ denotes the event that such a crossing exists in the `occupied space' $D(\eta)$.

Since $f^G_N$ is defined on a continuous state space, we shall need to modify the definition of noise sensitivity. Let $\eps > 0$ and~$\lambda > 0$, and let $\eta \subset \RR^2$ be chosen according to a Poisson point process of intensity~$\lambda$. We shall denote the measure associated to this Poisson process by $\PB_\lambda$, expectation with respect to this measure by $\ExB_\lambda$, and variance $\VarB_\lambda$. We define $\eta^\eps$ to be the random subset of $\RR^2$ obtained by deleting each element of $\eta$ independently with probability $\eps$, and then adding a independent Poisson point process of intensity $\eps \lambda$. It is clear that $\eta^\eps$ has the same distribution as $\eta$, and we will for that reason allow a minor abuse of notation letting $\PB_\lambda$ denote the measure by which the pair $(\eta,\eta^\eps)$ is chosen.

\begin{defn}[Noise sensitivity for Continuum Percolation] \label{def:NScts}
We say that the Poisson Boolean model is \emph{noise sensitive} at $\lambda$ if the sequence of functions $(f^G_N)_{N \ge 1}$ satisfies
\[
\lim_{N \to \infty} \ExB_\lambda\big[ f^G_N(\eta) f^G_{N}( \eta^{\eps}) \big] - \ExB_\lambda\big[ f^G_N(\eta) \big]^2 \, =\, 0 \qquad\qquad \textup{for every } \eps > 0.
\]
We shall say that the model is \emph{noise sensitive at criticality} if it is noise sensitive at $\lambda_c$.
\end{defn}

We remark that the Poisson Boolean model is trivially noise sensitive at every $\lambda \ne \lambda_c$. The reason is simply that when $\lambda > \lambda_c$ (or $\lambda<\lambda_c$), then $\lim_N f_N^G=1$ a.s.
(or $\lim_N f_N^G=0$ a.s.), as is well known.

The following theorem is our main result. It is the analogue for the Poisson Boolean model of the result from~\cite{BKS} mentioned above concerning bond percolation on $\ZZ^2$.

\begin{thm}\label{noise}
The Poisson Boolean model is noise sensitive at criticality.
\end{thm}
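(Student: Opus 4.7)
The plan is to reduce Theorem~\ref{noise} to a discrete noise sensitivity statement and then to apply a biased version of the Benjamini--Kalai--Schramm (BKS) theorem. Recall that the BKS criterion characterises noise sensitivity of a sequence $(g_N)$ of Boolean functions on a (possibly biased) product space in terms of the sum $\sum_c \Inf_c(g_N)^2$ of squared influences tending to zero. The scheme is therefore: (i) construct a discrete approximation $g_N$ of $f^G_N$ on a product probability space whose parameter $p$ stays bounded away from $0$ and $1$; (ii) bound the individual and summed influences of $g_N$ using RSW theory at $\lambda_c$, and apply the biased BKS theorem; and (iii) transfer noise sensitivity from $g_N$ back to $f^G_N$.

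The first step is the principal obstacle. A naive cell-based discretisation, tiling $R_N$ by cells of side $\delta$ and recording whether each cell contains a Poisson point, yields a Bernoulli product measure whose critical parameter satisfies $p \approx \lambda_c \delta^2$, which tends to $0$ as $\delta \to 0$; on such a measure the biased BKS theorem is powerless. I would therefore fix a mesoscopic cell size and let the coordinate at each cell encode a richer local event, designed so that (a) $g_N$ and $f^G_N$ agree with high $\PB_{\lambda_c}$-probability, (b) the marginal at each cell has a parameter in a compact subinterval of $(0,1)$ with respect to some reference Bernoulli measure, and (c) the continuous noise operator (delete each point with probability $\eps$, then add an independent Poisson process of rate $\eps \lambda_c$) projects onto independent Bernoulli noise on the discrete coordinates. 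This is where the extremal result on non-uniform hypergraphs advertised in the abstract should enter: by controlling the number of cells that a single Poisson point can simultaneously influence, one can maintain $p_c$ bounded away from $0$ while preserving the coupling between continuous and discrete noise.

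Granted such a discretisation, step (ii) follows the line of reasoning of~\cite{BKS}. At $\lambda = \lambda_c$, the Occupied and Vacant RSW Theorems imply that horizontal and vertical crossings of rectangles of bounded aspect ratio occur with probabilities bounded away from $0$ and $1$. Standard pivotality and four-arm arguments then yield an upper bound of the form $\Inf_c(g_N) \le N^{-\alpha}$ for each cell~$c$ and some fixed $\alpha>0$. Summing the squared influences over the $O(N^2)$ cells --- or, if this direct estimate is not quite enough, combining it with the BKS logarithmic reduction --- one concludes $\sum_c \Inf_c(g_N)^2 \to 0$, and invokes the biased BKS theorem (whose non-quantitative form the paper deduces from the unbiased version) to obtain noise sensitivity of $g_N$. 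Finally, for step (iii), properties (a) and (c) of the construction give $\PB_{\lambda_c}(f^G_N \ne g_N) \to 0$ together with a clean coupling of $(\eta,\eta^\eps)$ with a pair of independently noised discrete configurations; a triangle inequality in $L^2$ then transfers the vanishing of $\ExB_{\lambda_c}[g_N(\omega)g_N(\omega^\eps)] - \ExB_{\lambda_c}[g_N(\omega)]^2$ to the analogous quantity for $f^G_N$, which is exactly the conclusion required by Definition~\ref{def:NScts}.
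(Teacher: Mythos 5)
Your high-level architecture (reduce to a discrete product measure whose parameter is bounded away from $0$ and $1$, apply the biased BKS theorem, transfer back) matches the paper's in spirit, but the crucial step (i) is left as a wish-list, and the mechanism you sketch for it would fail. If each coordinate of $g_N$ is a function of the Poisson points in a fixed mesoscopic cell, then the continuum noise (delete each point with probability $\eps$, add an independent Poisson process of rate $\eps\lambda_c$) does \emph{not} project onto independent Bernoulli resampling of the coordinates: the chance that a cell's value gets re-randomized depends on how many points the cell contains, and enriching the per-cell alphabet only aggravates this. Your property (c) is therefore unattainable by any cell-based encoding, and without it step (iii) collapses. The paper's resolution is different in kind: it does not discretize space for the main argument, but splits the \emph{randomness} into two stages, first sampling a denser Poisson process $B$ of intensity $\lambda_c/p$ and then taking $\eta$ to be a $p$-subset of $B$. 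The noise acts only on the second stage, which genuinely is an i.i.d.\ product measure of fixed density $p$ (independent of $N$) on the quenched vertex set $B$, and one has the \emph{exact} identity
\[
\ExB_{\lambda_c}\big[f_N^G(\eta)f_N^G(\eta^{\eps})\big]-\ExB_{\lambda_c}\big[f_N^G(\eta)\big]^2=\ExB_{\lambda_c/p}\Big[\Ex^B_p\big[f^B_N(\eta)f^B_N(\eta^{\eps'})\big]-\Ex^B_p\big[f^B_N(\eta)\big]^2\Big]+\VarB_{\lambda_c/p}\Big(\Ex^B_p\big[f^B_N(\eta)\big]\Big).
\]
The first term is handled by showing $\PP^B_p$ is NS$_p$ for $\PB_{\lambda_c/p}$-almost every $B$ (Theorem~\ref{BpNS}); the second by the hypergraph variance bound, whose role you have misidentified: Theorem~\ref{varB} is not about how many cells one point can influence, but is a concentration statement, $\Var_q(r_\HH(B,p))=O(p\log^2(1/p))$ uniformly over all events $\HH$, which both kills the variance term as $p\to0$ and supplies the quenched non-degeneracy of crossing probabilities (Proposition~\ref{Bprop}) needed downstream.

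Step (ii) also has a gap. A pivotality bound $\Inf_c(g_N)\le N^{-\alpha}$ with the $\alpha>0$ that RSW yields gives only $\sum_c\Inf_c(g_N)^2\le N^{2-2\alpha}$, which does not tend to zero; this is exactly why Benjamini, Kalai and Schramm introduced the algorithm method, and why the paper runs the Water Algorithm, bounds its revealment by a one-arm probability via vacant circuits in $\Theta(\log N)$ disjoint annuli (which requires Proposition~\ref{Bprop}, i.e.\ RSW-type bounds \emph{in the quenched model} $\PP^B_p$, not merely in the annealed continuum model), and then applies the biased revealment theorem (Theorem~\ref{algthm}) and an optimization lemma to get $I\!I_p(f^B_N)\to0$. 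Your parenthetical appeal to "the BKS logarithmic reduction" gestures at this machinery but does not supply the quenched one-arm estimate, which itself rests on the concentration result whose role your proposal omits.
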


The proof of Theorem~\ref{noise} is based on two very general theorems, neither of which uses any properties of the specific model which we are studying. The first is a generalization of one of the main theorems of Benjamini, Kalai and Schramm~\cite{BKS}, a result referred to as the BKS Theorem, to biased product measures. It gives a sufficient condition (based on the concept of influence) for an arbitrary sequence of functions to be noise sensitive at density $p$ (see Theorem~\ref{BKSp}). A quantitative version of the BKS Theorem for biased product measures was recently proved by Keller and Kindler~\cite{KK}. Their result is therefore a strengthening of the qualitative result of~\cite{BKS}. We shall give a short deduction of the BKS Theorem for general $p \in (0,1)$ from the uniform case.

The second main tool is an extremal result on arbitrary non-uniform hypergraphs (i.e., arbitrary events on $\{0,1\}^n$), which allows us to bound the variance that arises when two stages of randomness are used to choose a random subset.  We shall use this bound (see Theorem~\ref{varB}) to prove noise sensitivity for the Poisson Boolean model via a corresponding result for a particular discrete percolation model (see Theorem~\ref{BpNS}). These tools are quite general, and we expect both to have other applications; we shall therefore state them here, and in some detail, for easy reference.

In order to state the BKS Theorem for product measure, we first need to define noise sensitivity in this setting. Let $\P_p$ denote product measure with density $p \in (0,1)$ on $\{0,1\}^n$, i.e., $\P_p(\omega_i = 1) = p$ independently for every $i \in [n]:=\{1,2,\ldots,n\}$. (We will let $\Ex_p$ denote expectation with respect to this measure.) When $p = 1/2$ this corresponds to picking an element of $\{0,1\}^n$ uniformly at random, and so we refer to it as the uniform case. Define $\omega^{\eps}$ as above, by re-randomizing  each bit independently with probability $\eps$.

\begin{defn}[Noise sensitivity at density $p$]\label{def:NSp}
A sequence of functions $f_{n} \colon \{0,1\}^n\rightarrow [0,1]$ is said to be \emph{noise sensitive at density $p$} (NS$_p$) if, for every $\eps > 0$,
\begin{equation} \label{eqn:NS_p}
\lim_{n\rightarrow \infty}  \E_p\big[ f_{n}(\omega) f_{n}(\omega^{\eps}) \big] - \E_p\big[ f_{n}(\omega) \big]^2 \,=\, 0.
\end{equation}
When $p = 1/2$, this is equivalent to~\eqref{def:NS}, the definition of noise sensitivity from~\cite{BKS}.
\end{defn}

The \emph{influence at density $p$}, denoted $\Inf_{p,i}(f)$, of a coordinate $i \in [n]$ in a function $f \colon \{0,1\}^n \to [0,1]$, is defined by
$$\Inf_{p,i}(f) \, := \, \E_p\big[ \big| f(\omega) - f(\sigma_i \omega) \big| \big],$$
where $\sigma_i$ is the function that flips the value of $\omega$ at position $i$. We denote the sum of the squares of the influences of $f$ by
$$I\!I_p(f) \, := \, \sum_{i=1}^{n}\Inf_{p,i}(f)^2.$$
The following theorem was first proved by Benjamini, Kalai and Schramm~\cite{BKS} in the case $p = 1/2$, but also remarked to hold for general $p$. A quantitative version was obtained by Keller and Kindler~\cite{KK}. The result was stated in~\cite{BKS} for functions into $\{0,1\}$, but the proof works also for functions into $[0,1]$, as was also observed in \cite[page 3]{KK}. We shall give a simple deduction of this theorem from the uniform case.

\begin{thm}[BKS Theorem for product measure]\label{BKSp}
Let $(f_n)_{n \ge 1}$ be a sequence of functions $f_n \colon \{0,1\}^n \to [0,1]$. For every $p\in(0,1)$,
$$\lim_{n\to\infty}I\!I_p(f_n)=0 \qquad \Rightarrow \qquad (f_n)_{n \ge 1} \textup{ is NS$_p$}.$$
\end{thm}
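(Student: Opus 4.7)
The plan is to reduce the $p$-biased theorem to the uniform case by \emph{lifting} each $p$-biased coordinate to a block of uniform coordinates. First consider dyadic $p = k/2^m$ and fix any Boolean function $g \colon \{0,1\}^m \to \{0,1\}$ with $\E_{1/2}[g] = p$ (for example, the indicator of any $k$-element subset of $\{0,1\}^m$). Given $f_n \colon \{0,1\}^n \to [0,1]$, define the lift $F_n \colon \{0,1\}^{nm} \to [0,1]$ by
\[
F_n(\tau) \, := \, f_n\bigl( g(\tau^1), g(\tau^2), \ldots, g(\tau^n) \bigr),
\]
where $\tau = (\tau^1, \ldots, \tau^n)$ is partitioned into $n$ blocks of $m$ bits. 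Under $\P_{1/2}$ the vector $\omega(\tau) := (g(\tau^1), \ldots, g(\tau^n))$ has distribution $\P_p^n$, so $F_n(\tau)$ has the same law as $f_n(\omega)$ under $\P_p$.

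Next I would compute the influences of the lift. Flipping the bit $\tau_{(i,j)}$ alters $F_n(\tau)$ only when it flips $g(\tau^i)$; using the independence of $\tau^i$ and the remaining blocks, together with the fact that $\omega_{-i}$ has law $\P_p^{n-1}$, a short direct computation gives the product identity
\[
\Inf_{1/2,(i,j)}(F_n) \, = \, \Inf_{1/2,j}(g) \cdot \Inf_{p,i}(f_n).
\]
Squaring and summing yields $I\!I_{1/2}(F_n) = I\!I_{1/2}(g) \cdot I\!I_p(f_n) \to 0$, since $I\!I_{1/2}(g)$ is a constant depending only on $g$. The uniform BKS Theorem (the known $p = 1/2$ case) therefore implies that $(F_n)_{n \ge 1}$ is NS$_{1/2}$.

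To transfer noise sensitivity back to $f_n$, the key observation is that \emph{block} resampling of $\tau$ induces bit resampling of $\omega$: if $\widetilde{\tau}^{\eps}$ is obtained by independently resampling each block $\tau^i$ (all $m$ bits simultaneously) with probability $\eps$, then $(\omega(\tau), \omega(\widetilde{\tau}^{\eps})) \stackrel{d}{=} (\omega, \omega^{\eps})$ under $\P_p$, so it suffices to show block noise sensitivity of $F_n$. A standard Walsh--Fourier calculation on $\{0,1\}^{nm}$ yields
\[
\E_{1/2}\bigl[ F_n(\tau) F_n(\widetilde{\tau}^{\eps}) \bigr] - \E_{1/2}[F_n]^2 \, = \, \sum_{S \ne \emptyset} \hat{F}_n(S)^2 (1-\eps)^{b(S)},
\]
where $b(S)$ is the number of blocks intersected by $S$. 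Since each block has $m$ bits, $b(S) \ge |S|/m$, and so setting $\eps' := 1 - (1-\eps)^{1/m} \in (0,1)$ one obtains $(1-\eps)^{b(S)} \le (1-\eps')^{|S|}$. The block-noise sum is thus dominated by the bit-noise sum $\sum_{S \ne \emptyset} \hat{F}_n(S)^2 (1-\eps')^{|S|}$, which vanishes in the limit by the NS$_{1/2}$ property of $(F_n)$ at noise level $\eps'$.

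The main obstacle I anticipate is extending the argument from dyadic $p$ to arbitrary $p \in (0,1)$, since no finite Boolean function $g$ exactly realises $\E_{1/2}[g] = p$ for non-dyadic $p$. I would handle this by an approximation scheme: choose $m_n \to \infty$ and dyadic $p_n \to p$ rapidly enough that the total variation distance between $(\omega,\omega^{\eps})$ under $\P_{p_n}$ and under $\P_p$ is $o(1)$, while ensuring $I\!I_{p_n}(f_n) \to 0$ via a Lipschitz-in-$p$ bound on the influences. An equivalent alternative is to encode $\omega_i = \1_{U_i \le p}$ with $U_i$ uniform on $[0,1]$, write $U_i = \sum_j \tau_{i,j} 2^{-j}$ as a binary expansion, truncate at depth $J$, and apply the above argument followed by $J \to \infty$.
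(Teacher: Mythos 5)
Your argument is correct in substance but follows a genuinely different route from the paper. The paper reduces \emph{downwards}: it couples a uniform $X$ with a $p$-biased $Z=XY$ ($Y$ a $2p$-subset), passes to the smoothed function $h_f(X)=\E[f(Z)\mid X]$, and proves the exact spectral identity $\hat{h}_f(S)=(\bar p/(1-\bar p))^{|S|/2}\hat f^p(S)$, from which both the influence comparison and the NS equivalence drop out; uniform BKS is then applied to $h_{f_n}$. You reduce \emph{upwards}, encoding each $p$-biased bit by a block of $m$ uniform bits via $g$ and applying uniform BKS to the lift $F_n$. Your influence product formula $\Inf_{1/2,(i,j)}(F_n)=\Inf_{1/2,j}(g)\,\Inf_{p,i}(f_n)$ is right (the event $\{g(\tau^i)\neq g(\sigma_j\tau^i)\}$ is independent of the other blocks, and $|f(\omega)-f(\sigma_i\omega)|$ does not depend on $\omega_i$), and the block-to-bit noise comparison via $b(S)\ge |S|/m$ and $\eps'=1-(1-\eps)^{1/m}$ is a clean and correct way to transfer NS back. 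So for fixed dyadic $p$ your proof is complete. What the paper's route buys is a single coupling valid for every $p\in(0,1)$ with exact Fourier identities that are reused elsewhere (the algorithm method in Section 5); what yours buys is that the influence step is entirely elementary, with no Fourier analysis until the final noise comparison.

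The one place you should be careful is the passage to non-dyadic $p$, and specifically the choice of $g$ once $m=m_n\to\infty$. To make the total-variation error $o(1)$ you need $|p_n-p|=o(1/n)$, hence $m_n\gtrsim\log_2 n$; but for ``the indicator of any $k$-element subset'' of $\{0,1\}^{m}$ one typically has $\Inf_{1/2,j}(g)\asymp p$ for every $j$, so $I\!I_{1/2}(g_n)\asymp m_n p^2\to\infty$, and the bound $I\!I_{1/2}(F_n)=I\!I_{1/2}(g_n)\,I\!I_{p_n}(f_n)$ no longer tends to $0$ merely because $I\!I_p(f_n)\to0$. Your Lipschitz-in-$p$ remark controls $I\!I_{p_n}(f_n)$ but not this growing factor. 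The fix is exactly your second alternative: take $g$ to be the threshold $g(\tau)=\1\{\sum_j\tau_j2^{-j}\le p\}$ truncated at depth $J_n$, for which $\Inf_{1/2,j}(g)\le 2^{-j+1}$ and hence $I\!I_{1/2}(g)=O(1)$ uniformly in the depth. With that choice (and the routine total-variation estimate comparing $(\omega,\omega^{\eps})$ under $\P_{p_n}$ and $\P_p$), your argument goes through for all $p\in(0,1)$.
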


We remark that the approach we use to prove Theorem~\ref{BKSp} is quite general, and may be used to extend various other results from uniform to biased product measures, see Section~\ref{hfsec}. Before introducing our second main tool, Theorem~\ref{varB}, let us give some more context, by describing our general approach to the proof of Theorem~\ref{noise}.

We shall choose our Poisson configuration $\eta \subset \RR^2$ in two steps; in other words, we view the Poisson Boolean model as a `weighted average' (according to a certain probability distribution) of a family of discrete percolation models. To be precise, for each countable set $B \subset \RR^2$ and $p \in (0,1)$ we consider the following simple model; it is nothing more than site percolation on the graph (with vertex set $B$) defined by the Poisson Boolean model. 

\begin{defn}[The percolation model $\PP^{B}_p$]
For each countable $B \subset \RR^2$ and $p \in (0,1)$, a configuration $\eta\subset B$ in the percolation model $\PP^{B}_{p}$ is obtained by including each point of $B$ independently with probability $p$.
\end{defn}

A \emph{$p$-subset} of a (countable) set $S$ is a random subset chosen by including each element independently with probability $p$. We shall write $\Pr^S_p$ for the corresponding probability distribution; or just $\Pr_p$ when $S = [n]$. We will be interested in the $\PP^B_p$ model for fixed (small) $p>0$ in the case where $B\subset\RR^2$ is chosen according to $\PB_{\lambda_c/p}$, the measure of a Poisson point process with intensity $\lambda_c/p$. Given $B$ chosen in that way, let~$\eta$ denote a $p$-subset of $B$. That is, choose $\eta\subset B$ according to the conditional measure $\Pr^B_p$. We emphasize that choosing~$\eta$ in this two-step procedure is equivalent to choosing it according to a Poisson point process of intensity~$\lambda_c$. Consequently, $D(\eta)$ corresponds to a configuration of the occupied space in the Poisson Boolean model at criticality.
  
For each countable set $B \subset \RR^2$, define the function $f_{N}^{B} \colon \{0,1\}^{B}\to \{0,1\}$ by setting $f^B_N(\eta) = 1$ if and only if there is a horizontal crossing of $R_N$ in $D(\eta)$. (Note that although $B$ may be infinite, the function $f_N^B$ only depends on a finite number of elements of $B$, a.s. in~$\PB_{\lambda_c/p}$.) We say that the model $\PP^B_p$ is noise sensitive at density $p$ (NS$_p$) if the sequence $(f^B_N)_{N \ge 1}$ is NS$_p$. Our proof of Theorem~\ref{noise} proceeds via the following result.

\begin{thm}\label{BpNS}
The model $\PP^B_p$ is noise sensitive at density $p$ for $\PB_{\lambda_c/p}$-almost every $B$, for each sufficiently small $p > 0$.
\end{thm}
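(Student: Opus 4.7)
The plan is to apply Theorem~\ref{BKSp} to the sequence $(f_N^B)_{N\ge 1}$ for each fixed $B$. Since $\Pr_p^B$ is a product measure on $\{0,1\}^B$ with density $p$, Theorem~\ref{BKSp} reduces NS$_p$ of $(f_N^B)_{N\ge 1}$ to showing $I\!I_p(f_N^B) \to 0$ as $N\to\infty$. It therefore suffices to prove this convergence $\PB_{\lambda_c/p}$-almost surely. The influence $\Inf_{p,x}(f_N^B)$ equals the probability, taken over the $p$-subset $\eta\subset B$, that the point $x$ is pivotal for the event that $R_N$ is horizontally crossed by $D(\eta)$; only points lying within distance $1$ of $R_N$ can be pivotal.

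I would first bound $\ExB_{\lambda_c/p}[I\!I_p(f_N^B)]$. Writing $q_N(x,B) := \Pr_p^B(x\text{ is pivotal for }f_N^B)$ and exploiting the fact that the two-stage sampling $(B,\eta)$ produces a Poisson point process of intensity $\lambda_c$, Mecke's formula gives
\begin{equation*}
\ExB_{\lambda_c/p}\bigl[ I\!I_p(f_N^B) \bigr] \;=\; \frac{\lambda_c}{p}\int_{R_N^+} \ExB_{\lambda_c/p}\bigl[ q_N(x,B)^2 \bigr]\, dx,
\end{equation*}
where $R_N^+$ denotes $R_N$ dilated by $1$. Using $q_N(x,B)^2 \le \bigl(\sup_y q_N(y,B)\bigr)\cdot q_N(x,B)$, this reduces to controlling (i) the supremum of the pivotality, via a four-arm decay estimate, and (ii) the expected total pivotality mass. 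The Occupied RSW Theorem and Vacant RSW Theorem for the critical Poisson Boolean model give crossing probabilities bounded away from $0$ and $1$, and a BK/Reimer-type argument adapted to the continuum should yield polynomial decay of both (i) and (ii); combining these one obtains $\ExB_{\lambda_c/p}[I\!I_p(f_N^B)] \le C\, N^{-\delta}$ for some $\delta = \delta(p) > 0$.

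To upgrade this expectation bound to $\PB_{\lambda_c/p}$-a.s.\ convergence, I would bound $\VarB_{\lambda_c/p}[I\!I_p(f_N^B)]$ via a Poincar\'e-type inequality for Poisson functionals, exploiting the fact that $I\!I_p(f_N^B)$ depends only on $B \cap R_N^+$; together with Chebyshev this yields concentration of $I\!I_p(f_N^B)$ around its mean. A Borel--Cantelli argument along $N_k = 2^k$ then gives $I\!I_p(f_{N_k}^B) \to 0$ almost surely, and a local comparison between $f_N^B$ and $f_{N_k}^B$ for $N_k \le N \le N_{k+1}$ extends this to the full sequence. The main obstacle will be the polynomial four-arm decay underlying the mean estimate: while the Occupied/Vacant RSW Theorems provide control on rectangle crossings, converting this into quantitative decay of the pivotality probabilities in the continuum is delicate because discs overlap arbitrarily, so the standard BK/Reimer machinery must be adapted. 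The hypothesis ``$p$ sufficiently small'' is natural here, as it keeps $\eta$ sparse within $B$ (which has large intensity $\lambda_c/p$), allowing standard separation-of-arms arguments to apply uniformly.
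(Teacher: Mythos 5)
Your opening reduction --- apply Theorem~\ref{BKSp} for each fixed $B$ and show $I\!I_p(f_N^B)\to0$ for $\PB_{\lambda_c/p}$-a.e.\ $B$ --- matches the paper's starting point. But the way you propose to bound $I\!I_p(f_N^B)$ is where the argument breaks down. You bound each influence by a pivotality (four-arm) probability and then assert that RSW plus ``a BK/Reimer-type argument adapted to the continuum'' yields polynomial decay of the maximal pivotality. This is not a deferred technicality; it is the entire difficulty. There is no BK/Reimer inequality or quasi-multiplicativity available for the occupied space of the Boolean model, and even in the lattice setting RSW alone does not give the quantitative statement you need: since there are of order $N^2$ relevant points each with influence comparable to a four-arm probability, $\sum_x \Inf_{p,x}(f_N^B)^2\to0$ requires something like $\alpha_4(N)=O(N^{-1-\delta})$, i.e.\ a four-arm exponent strictly greater than $1$. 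This is exactly the obstruction that led Benjamini--Kalai--Schramm to introduce the deterministic algorithm method, and the paper follows that route: the Water Algorithm $\A_W$ determines $f_N^B$, a point is queried only if a \emph{one-arm} event occurs, and the one-arm probability is beaten down by surrounding each unit square with $c\log N$ disjoint annuli, each of which contains a vacant circuit with probability bounded below. Propositions~\ref{prop:InfMKcorr} and~\ref{prop:algMKcorr} (via the correlation with $M_K$ and Lemma~\ref{lma:opt}) then convert the revealment bound $N^{-\delta}$ into $I\!I_p(f_N^B)\to0$, with no four-arm input whatsoever.

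A second, related gap: your proposal never uses Theorem~\ref{varB} or Proposition~\ref{Bprop}, but these are indispensable for the quenched statement. The RSW/Alexander bounds (Theorem~\ref{nontriv}) control crossing probabilities under $\PB_{\lambda_c}$, i.e.\ annealed over $(B,\eta)$; to run any arm-type argument inside the model $\PP^B_p$ for a \emph{fixed typical} $B$ you must first show that crossing (or vacant-circuit) probabilities under $\Pr^B_p$ are themselves bounded away from $0$ and $1$ for most $B$. That transfer is precisely Proposition~\ref{Bprop}, which rests on the hypergraph variance bound of Theorem~\ref{varB}; it is also what makes the failure probability in Lemma~\ref{reveal:v} as small as $N^{-C}$, so that Borel--Cantelli applies directly without your dyadic-subsequence interpolation (which, as stated, is itself unsubstantiated, since $I\!I_p(f_N^B)$ for nearby $N$ are not obviously comparable). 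Your Mecke-formula computation only controls annealed expectations and does not substitute for this step.
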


Thus, the proof of Theorem~\ref{noise} divides naturally into two parts.  In the first we adapt the methods of Benjamini, Kalai and Schramm~\cite{BKS} to prove noise sensitivity of the discrete models $\PP^{B}_{p}$; in the second we use our bound on the variance (Theorem~\ref{varB}, below) to prove that this noise sensitivity transfers to the continuous Poisson Boolean model. Interestingly, Theorem~\ref{varB} will also be a key tool in the proof of Theorem~\ref{BpNS}.

To apply the adapted methods of~\cite{BKS}, we require a bound on the fluctuations (in $B$ chosen according to $\PB_{\lambda_c/p}$) of the probability of the crossing event in the model $\PP^B_p$; we shall prove such a bound in a \emph{much} more general context. Indeed, our next theorem holds for \emph{arbitrary} hypergraphs (events), and thus we shall not use any properties of the specific percolation problem under consideration. By working at this level of generality, we are able to deduce, with no extra effort, similar bounds for crossings of rectangles, and for crossings in other percolation models.

A hypergraph $\HH$ is simply a collection of subsets of $[n]$; or, equivalently, it is a subset of $\{0,1\}^n$. We call these sets `edges', and remark that if every edge has exactly two elements then $\HH$ is a graph. Given a hypergraph $\HH$, for each set $B \subset [n]$ and $p \in (0,1)$ we define
$$r_{\HH}(B,p) \; := \; \Pr^B_p\big( A \in \HH \big),$$
where $A$ is a $p$-subset of $B$. We remark that our use of the letter $B$ here is supposed to be suggestive; in our applications it will correspond to a discrete approximation of the subset of $\RR^2$ considered above, which was chosen according to $\PB_{\lambda_c/p}$. Indeed, given a rectangle $R \subset \RR^2$, we shall discretize by partitioning it into $n$ small squares. The set $B$ will be a $q$-subset of $[n]$, where $q = q(n)$ is chosen so that (the set of centres of squares corresponding to) $B$ has intensity $\lambda_c/p$ in $R$. Our hypergraph $\HH$ will encode crossings of $R$ in $D(\eta)$, where $\eta$ is a $p$-subset of these squares. Observe that in this case, if $n$ is large, then $r_\HH(B,p)$ is very close to the probability of a crossing of $R$ in the model $\PP^B_{p}$.

\begin{thm}\label{varB}
Let $0 < p \le 1/2$, $0 < q < 1$ and $n \in \N$ satisfy $n \ge 200(pqn)^3$, $n\ge 8p(qn)^2$ and $pqn \ge 32 \log\frac{1}{p}$. Let $\HH$ be a hypergraph on vertex set $[n]$, and let $B$ be a $q$-subset of $[n]$. Then
$$\Var_q\big( r_{\HH}(B,p) \big) \, = \, O\bigg( p \left( \log \frac{1}{p} \right)^2 \bigg),$$
where the constant implicit on the right-hand side is independent of $\HH$.
\end{thm}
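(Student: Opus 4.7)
The plan is to compute $\Var_q(r_\HH(B,p))$ exactly via a two-sample identity and the biased Fourier expansion of $\1_\HH$. Start by writing $r_\HH(B,p) = \E_Y[\1_\HH(B\cap Y)]$ for $Y$ an independent $p$-subset of $[n]$. Squaring and averaging over $B$ gives
\[
\E_q\bigl[r_\HH(B,p)^2\bigr] \,=\, \P_\mu(A_1, A_2 \in \HH),
\]
where $A_j := B \cap Y_j$ for independent $p$-subsets $Y_1, Y_2$ of $[n]$ and $\mu$ is the joint law of $(A_1, A_2)$. Each $A_j$ is marginally a $pq$-subset of $[n]$, so $\E_q[r_\HH] = \bar r := \P_{pq}(A \in \HH)$ and $\Var_q(r_\HH) = \P_\mu(A_1, A_2 \in \HH) - \bar r^{\,2}$.

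Next, expand $\1_\HH = \sum_S \hat f(S)\,\chi_S$ in the $pq$-biased orthonormal basis $\chi_S = \prod_{i\in S}\phi(x_i)$ with $\phi(x) = (x-pq)/\sqrt{pq(1-pq)}$. Under $\mu$ the coordinatewise pairs $(\1_{i\in A_1}, \1_{i\in A_2})$ are independent across $i$ with identical joint law: marginals are $pq$-Bernoulli, but $\P_\mu(\1_{i\in A_1} = \1_{i\in A_2} = 1) = p^2q$ rather than $(pq)^2$. A short calculation gives the correlation
\[
c \;:=\; \E_\mu\bigl[\phi(\1_{i\in A_1})\,\phi(\1_{i\in A_2})\bigr] \;=\; \frac{p(1-q)}{1-pq} \;\le\; p.
\]
Using the product structure of $\mu$ together with the zero-mean property of $\phi$ under its marginal, $\E_\mu[\chi_S(A_1)\chi_T(A_2)]$ vanishes unless $S = T$, in which case it equals $c^{|S|}$. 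Therefore
\[
\Var_q(r_\HH(B,p)) \;=\; \sum_{S \ne \emptyset}\hat f(S)^2\, c^{|S|} \;\le\; c\cdot \Var_{pq}(\1_\HH) \;\le\; c/4 \;\le\; p/4,
\]
where Parseval gives $\sum_S \hat f(S)^2 = \bar r$ and hence $\sum_{S\ne\emptyset}\hat f(S)^2 = \bar r(1-\bar r) \le 1/4$.

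Notice that $\Var_q(r_\HH) \le p/4$ is in fact \emph{stronger} than the advertised $O\bigl(p(\log 1/p)^2\bigr)$ and does not rely on any of the numerical hypotheses on $n, p, q$ in the theorem statement; the only substantive calculation is the correlation identity $c = p(1-q)/(1-pq)$, which follows from $\Cov_\mu(\1_{i\in A_1},\1_{i\in A_2}) = p^2q(1-q)$ divided by the marginal variance $pq(1-pq)$. I expect the paper's own argument, in keeping with its \emph{extremal/non-uniform hypergraph} framing and Chernoff-flavoured hypotheses, to proceed more combinatorially --- concentrating $|B|\sim \Bin(n,q)$ near $qn$ via $pqn \ge 32\log(1/p)$ and controlling the residual variance by a martingale or Efron--Stein style decomposition, which would naturally produce the $\log^2(1/p)$ slack --- but the Fourier route sketched above already delivers the conclusion with room to spare, and it is the line I would pursue.
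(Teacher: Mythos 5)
Your proof is correct, and it establishes a strictly stronger, hypothesis-free statement: $\Var_q(r_\HH(B,p)) = \sum_{S\neq\emptyset}\hat f(S)^2 c^{|S|} \le c\,\bar r(1-\bar r)\le p/4$ with $c=p(1-q)/(1-pq)$. I checked the key points: $r_\HH(B,p)=\E_Y[\1_\HH(B\cap Y)]$ for $Y$ an independent $p$-subset of $[n]$; the pairs $(\1_{i\in A_1},\1_{i\in A_2})$ are i.i.d.\ across $i$ with $\P(1,1)=p^2q$, hence covariance $p^2q(1-q)$ and correlation $c$ as you state; and the cross-expectation $\E_\mu[\chi_S(A_1)\chi_T(A_2)]$ factorizes and vanishes off the diagonal because $\phi$ is centred under the $pq$-marginal. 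Your route is genuinely different from the paper's. The paper conditions on $|B|$ and splits the variance into two pieces: the within-$k$ term is handled by passing to fixed sizes $|A|=m$, $|B|=k$ and invoking Bey's extremal inequality on sums of squares of degrees in $m$-uniform hypergraphs (Proposition~\ref{var}), while the across-$k$ term is controlled by Chernoff concentration of $\Bin(n,q)$ together with a ratio estimate on nearby binomial point probabilities (Lemma~\ref{Pcalc}); the latter is the source of both the $(\log\frac1p)^2$ loss and all three numerical hypotheses on $n,p,q$. Your two-sample Fourier computation avoids all of this and shows the hypotheses are artefacts of the method, at the cost of not producing the fixed-size variance bound (Proposition~\ref{var}) and its extremal corollary (Corollary~\ref{hyper}), which the authors present as results of independent interest. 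Note that the paper itself remarks that an alternative proof of Theorem~\ref{varB} "using completely different methods" was obtained in~\cite{Daniel}; your argument is presumably of that flavour.
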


We emphasize the crucial point, which is that our bound on $\Var_{q} \big( r_{\HH}(B,p) \big)$ goes to zero as $p \to 0$ \emph{uniformly} in $\HH$. Here, and throughout, $f(x) = O(g(x))$ denotes the existence of an absolute constant $C > 0$, independent of all other variables, such that $|f(x)| \le C|g(x)|$ for every $x$ in the domain of $f$ and $g$. We remark that an alternative proof of Theorem~\ref{varB} was recently obtained by one of us~\cite{Daniel}, using completely different methods.

As noted above, we shall use Theorem~\ref{varB} in order to prove that the
sequence $(f_N^B)_{N\ge 1}$ is noise sensitive for $\PB_{\lambda_c/p}$-almost every $B$, as well as to deduce Theorem~\ref{noise} from Theorem~\ref{BpNS}. Indeed, we shall use Theorem~\ref{varB} together with the `deterministic algorithm' method (see Sections~\ref{algsubsec} and~\ref{Algsec}) to obtain bounds on the influences of variables; Theorem~\ref{BpNS} then follows from the BKS Theorem for product measure. 

We study in this paper how methods developed to study noise sensitivity for discrete percolation models can be adapted to a continuum setting. We have chosen to follow the approach of Benjamini, Kalai, and Schramm~\cite{BKS}.
More recently, \emph{quantitative} noise sensitivity has been introduced. Here one aims at determining
the rate at which $\eps=\eps(n)$ is allowed to decay while the limit in~\eqref{def:NS} persists. Results of this kind were obtained by Schramm and Steif~\cite{SS} and Garban, Pete, and Schramm~\cite{GPS} in the context of percolation crossings on the square lattice $\ZZ^2$ and triangular lattice $\TT^2$. As a corollary of our main result (Theorem~\ref{noise}), via the approach developed in~\cite{SS}, it is possible to obtain similar quantitative results for the Poisson Boolean model.

Define the \emph{noise sensitivity exponent} (for the Poisson Boolean model) as the supremum over the set of $\alpha\ge0$ for which the limit in Definition~\ref{def:NScts} holds with $\eps=\eps(N)=N^{-\alpha}$.

\begin{cor}\label{cor:QNS}
The noise sensitivity exponent for the Poisson Boolean model at criticality is strictly positive. That is, there exists $\alpha>0$ such that, for $\eps(N)=N^{-\alpha}$,
$$
\lim_{N\to\infty}\ExB_{\lambda_c}\big[f_N^G(\eta)f_N^G(\eta^{\eps(N)})\big]-\ExB_{\lambda_c}\big[f_N^G(\eta)\big]^2=0.
$$
\end{cor}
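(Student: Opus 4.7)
The plan is to follow the randomized algorithm method introduced by Schramm and Steif~\cite{SS}. They showed that if $f\colon\{0,1\}^n\to\{0,1\}$ is computed by a randomized algorithm whose \emph{revealment} (the maximum, over coordinates $i\in[n]$, of the probability that bit $i$ is queried) is at most $\delta$, then the level-$k$ Fourier weight of $f$ satisfies $\sum_{|S|=k}\hat f(S)^2\le k\delta$. If $\delta=\delta(n)$ decays polynomially in $n$, this spectral bound immediately yields quantitative noise sensitivity at a polynomial rate $\eps(n)=n^{-\alpha}$ for some $\alpha>0$.

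The first step is to prove a quantitative analogue of Theorem~\ref{BpNS}: for every sufficiently small $p>0$ there exists $\alpha(p)>0$ such that, for $\PB_{\lambda_c/p}$-almost every $B$, the sequence $(f_N^B)_{N\ge 1}$ is noise sensitive at density $p$ with rate $\eps(N)=N^{-\alpha(p)}$. To achieve this, I would design an interface exploration algorithm on the discrete model $\PP^B_p$, which reveals points of $B$ one at a time while tracing the lowest occupied horizontal crossing of $R_N$ in $D(\eta)$ (or certifying that none exists). The revealment at a point $x\in B$ is bounded by the probability that $x$ lies close to this interface, which is controlled by an alternating four-arm probability. Using RSW-type estimates for the Poisson Boolean model (see e.g.~\cite{Alex,BR}), already exploited elsewhere in the paper, this four-arm probability decays polynomially in $N$, and hence so does the revealment. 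The Schramm--Steif bound then delivers quantitative noise sensitivity for $(f_N^B)_{N\ge 1}$ at density $p$.

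To pass from the discrete to the continuum setting, I would adapt the two-stage decomposition used to prove Theorem~\ref{noise}. Write $\eta$ as a $p$-subset of a $\PB_{\lambda_c/p}$-distributed set $B$, and couple $(\eta,\eta^\eps)$ on the Poisson side so that it arises from a perturbation at level $\eps$ of the underlying $p$-subset together with a comparable perturbation of $B$ itself. By Theorem~\ref{varB}, the fluctuation of $r_\HH(B,p)$ in $B$ is $O(p\log^2(1/p))$ uniformly in the hypergraph $\HH$, so the variance attributable to the $B$-side noise in $\ExB_{\lambda_c}\bigl[f_N^G(\eta)f_N^G(\eta^\eps)\bigr]-\ExB_{\lambda_c}\bigl[f_N^G(\eta)\bigr]^2$ can be made uniformly small (in $N$ and $\eps$) by choosing $p$ small. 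Conditional on $B$, the quantitative noise sensitivity of $(f_N^B)_{N\ge 1}$ then yields the claimed decay for some polynomially decaying $\eps(N)$.

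The main obstacle is establishing the polynomial revealment bound for the interface exploration on $\PP^B_p$. Unlike the triangular lattice, where the four-arm exponent is known exactly ($=5/4$) via SLE$_6$, in the Poisson Boolean setting only softer RSW estimates are available; the challenge is to leverage these to obtain \emph{some} polynomial decay of the annulus crossing probabilities (the precise value being irrelevant for this corollary). A secondary difficulty is reconciling the two-stage decomposition with the Fourier spectral estimate in such a way that the $B$-side randomness is absorbed cleanly, which is precisely the role played by Theorem~\ref{varB}.
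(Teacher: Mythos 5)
Your overall strategy --- the Schramm--Steif revealment method combined with the two-stage decomposition \eqref{eq:NSkey} --- is the one the paper follows, but two steps of your plan fail as written. The main problem is the algorithm. A \emph{deterministic} exploration that traces the lowest occupied crossing does not have small revealment: the exploration necessarily starts at a fixed location (say the bottom-left corner of $R_N$), and points of $B$ within distance $O(1)$ of that location are queried with probability bounded away from zero, so $\delta_{[n]}(\A,p)=\max_j\delta_j(\A,p)=\Theta(1)$ and Proposition~\ref{prop:SS} yields nothing. The same boundary effect defeats the ``close to the interface $\Rightarrow$ four-arm event'' bound: for points near the bottom side the relevant event is a half-plane arm event at bounded distance, whose probability is of order one. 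This is exactly why Schramm and Steif use a \emph{randomized} algorithm: the paper's algorithm chooses its starting point uniformly in the middle third of the left side of $R_N$, so that the revealment of every point is controlled by a \emph{one-arm} event to a distance that is typically large. That one-arm probability is then shown to decay polynomially exactly as in Lemma~\ref{reveal:v}, via $\Theta(\log N)$ disjoint annuli each containing a vacant circuit with probability bounded below --- a quenched statement, valid for $\PB_{\lambda_c/p}$-almost every $B$, which rests on Proposition~\ref{Bprop} (extended to the relevant density via Corollary~\ref{cor:noise}). No four-arm estimate is needed, and extracting polynomial four-arm decay from RSW alone would be an extra, unnecessary burden.

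The second gap is the endgame. Making $\VarB_{\lambda_c/p}\big(\Ex^B_p[f_N^B(\eta)]\big)$ ``uniformly small by choosing $p$ small'' via Theorem~\ref{varB} only bounds the limsup in the corollary by $O\big(p(\log\frac{1}{p})^2\big)$ for the particular $p$ you fixed; since your exponent $\alpha(p)$ depends on $p$, you cannot afterwards let $p\to0$ and keep a single $\alpha>0$, so you do not obtain that the limit is zero. The paper avoids this by noting (Corollary~\ref{cor:noise}) that the already-proved qualitative Theorem~\ref{noise} forces this variance term to tend to $0$ as $N\to\infty$ for \emph{every fixed} $p$; one may therefore simply work at $p=1/2$, where the variance contributes nothing in the limit. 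Note also that in \eqref{eq:NSkey} the set $B$ is not perturbed at all --- the noise acts only on the second-stage $p$-subset, with $\eps'=\eps/(1-p)$ --- so there is no ``$B$-side noise'' to couple; the variance term arises purely from conditioning on $B$.
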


We shall, in Section~\ref{sec:QNS}, only outline the proof of Corollary~\ref{cor:QNS}. The rest of the paper is organized as follows. In Section~\ref{sketchsec} we give a full overview of the proof, and state several other results which may be of independent interest.  In Section~\ref{nontrivsec} we recall some facts about the Poisson Boolean model, and in Sections~\ref{BKSsec} and~\ref{Algsec} we prove Theorem~\ref{BKSp} and extend the deterministic algorithm method of~\cite{BKS} to general $p \in (0,1)$. In Section~\ref{hypersec} we prove Theorem~\ref{varB}, and deduce some simple consequences, in Section~\ref{T1sec} we prove Theorem~\ref{BpNS} and deduce Theorem~\ref{noise}, and in Section~\ref{sec:QNS} we sketch the proof of Corollary~\ref{cor:QNS}. Finally, in Section~\ref{opensec} we state some open questions.

Throughout the article we treat elements of $\{0,1\}^n$ as subsets of $[n]$, and vice versa, without comment, by identifying sets with their indicator functions. Thus $\Pr_p$ denotes the biased product measure on $\{0,1\}^n$, and also the probability distribution associated with choosing a $p$-subset of $[n]$. As remarked above, we shall (suggestively) use the letter $B$ to denote both a $q$-subset of $[n]$, and a subset of $\RR^2$ chosen according to $\PB_{\lambda_c/p}$ (i.e., according to a Poisson point process with intensity $\lambda_c / p$), and trust that this will not confuse the reader. The letter $\eta$ will always denote a random subset of the plane, chosen according to a Poisson process of intensity $\lambda_c$; or, equivalently, chosen as a $p$-subset of the set $B \subset \RR^2$.

\section{Further results, and an overview of the proof}\label{sketchsec}

In this section we introduce a number of auxiliary methods and results that we shall use in the proof of Theorem~\ref{noise}, and which may also be of independent interest.  In particular, we introduce a new way of deducing results for biased product measures from results in the uniform case.  We shall use this method in Sections~\ref{BKSsec} and~\ref{Algsec} to generalize the BKS Theorem and the deterministic algorithm method of Benjamini, Kalai and Schramm~\cite{BKS}.

Let us begin by examining the link between the Poisson Boolean model and the model $\PP^B_p$. It is this link that will enable us to deduce Theorem~\ref{noise} from Theorems~\ref{BpNS} and~\ref{varB}. First, to illustrate the sense in which the Boolean model may be viewed as an `average' of the discrete percolation models $\PP^B_p$, let us fix $p\in (0,1)$, and observe that
\begin{equation}\label{average}
\ExB_{\lambda_c/p}\big[ \E_p^B \big( f_{N}^{B}(\eta) \big) \big] \,  = \, \ExB_{\lambda_c/p}\big[ \P^B_p\big( H( \eta, R_N, \bullet) \big) \big] \, = \, \PB_{\lambda_c} \big(H ( \eta, R_N, \bullet)\big) \, = \, \ExB_{\lambda_c} \big[ f^G_N(\eta) \big],
\end{equation}
where the second equality follows since if $B\subset \RR^2$ is chosen according to a Poisson point process of intensity $\lambda_c /p$, then $\eta$ is distributed as a Poisson point process of intensity $\lambda_c$. Furthermore, it is not difficult to show that for $\eps\in(0,1-p)$ and
$\eps'=\eps/(1-p)$
\begin{equation}\label{eq:NSkey}
\begin{aligned}
\ExB_{\lambda_c}\big[f_N^G(\eta)f_N^G(\eta^\eps)\big]-\ExB_{\lambda_c}\big[f_N^G(\eta)\big]^2\;&=\;\ExB_{\lambda_c/p}  \Big[ \Ex^B_p\big[ f^B_N (\eta) f^B_N (\eta^{\eps'}) \big] - \Ex^B_p\big[ f^B_N (\eta) \big]^2 \Big] \\
&\quad\;+\, \VarB_{\lambda_c/p} \Big( \Ex^B_p\big[ f^B_N (\eta) \big] \Big),
\end{aligned}
\end{equation}
where, as the notation suggests, on the left-hand side $(\eta,\eta^\eps)$ is specified as in Definition~\ref{def:NScts}, and on the right-hand side $(\eta,\eta^{\eps'})$ is chosen as in Definition~\ref{def:NSp}, as subsets of $B$ (see Section~\ref{T1sec}). Thus, proving that the Poisson Boolean model is noise sensitive at criticality reduces to proving that (for some $p$)
\begin{equation}\label{21triv}
\ExB_{\lambda_c/p}  \Big[ \Ex^B_p\big[ f^B_N (\eta) f^B_N (\eta^\eps) \big] - \Ex^B_p\big[ f^B_N (\eta) \big]^2 \Big] \,+\, \VarB_{\lambda_c/p} \Big( \Ex^B_p\big[ f^B_N (\eta) \big] \Big) \, \to \, 0
\end{equation}
as $N \to \infty$. Theorem~\ref{BpNS} says exactly that, for fixed $p$, the first term is $o(1)$ as $N \to \infty$, while the following proposition shows that the second term can be made arbitrarily small by choosing $p$ appropriately. 

\begin{prop}\label{varBp}
$$\lim_{p\to 0}\, \limsup_{N \to \infty} \, \VarB_{\lambda_c/p} \Big( \P^B_p\big(H ( \eta, R_{N}, \bullet)\big) \Big)\, =\, 0.$$
\end{prop}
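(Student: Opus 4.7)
The plan is to reduce Proposition~\ref{varBp} to Theorem~\ref{varB} by discretizing the Poisson process. Write $g_N(B) := \P^B_p\bigl(H(\eta, R_N, \bullet)\bigr)$; this quantity depends on $B$ only through $B \cap R_N^+$, where $R_N^+ := R_N + [-1,1]^2$ collects all points whose unit disc can meet $R_N$. Set $A_N := |R_N^+|$. Fix $\delta > 0$, partition $R_N^+$ into $n = A_N/\delta^2$ axis-aligned cells $c_1, \dots, c_n$ of side $\delta$ with centers $x_1, \dots, x_n$, and associate to this partition the hypergraph
$$\HH_{N,\delta} \;:=\; \Bigl\{\omega \subseteq [n] : \bigcup_{i \in \omega} \overline{B(x_i,1)} \text{ contains a horizontal crossing of } R_N\Bigr\}.$$
Let $\hat B := \{i : B \cap c_i \ne \emptyset\}$ and $\hat\eta := \{i : \eta \cap c_i \ne \emptyset\}$. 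By the independence of the Poisson process over disjoint cells, $\hat B$ is \emph{exactly} a $q$-subset of $[n]$ with $q := 1 - e^{-\lambda_c \delta^2/p}$.

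The key reduction is the identity, valid for $\PB_{\lambda_c/p}$-almost every $B$,
$$g_N(B) \;=\; \lim_{\delta \to 0}\, r_{\HH_{N,\delta}}(\hat B, p), \quad \text{hence} \quad \VarB_{\lambda_c/p}\bigl(g_N\bigr) \;=\; \lim_{\delta \to 0}\, \Var_q\bigl(r_{\HH_{N,\delta}}(\hat B, p)\bigr)$$
by bounded convergence (both quantities lie in $[0,1]$). This identity rests on two standard facts: first, $\PB_{\lambda_c/p}$-a.s.\ no cell contains more than one point of $B$ once $\delta$ is sufficiently small, and in that case the conditional law of $\hat\eta$ given $B$ coincides with that of a $p$-subset of $\hat B$; second, $\eta$ is a.s.\ in general position, so that there exists $\delta^\star(\eta) > 0$ below which no $O(\delta)$ perturbation of the disc centers alters the crossing event, whence $\1_{H(\eta,R_N,\bullet)} = \1_{\hat\eta \in \HH_{N,\delta}}$.

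It then remains to invoke Theorem~\ref{varB}. The bound $1 - e^{-x} \le x$ gives $pqn \le \lambda_c A_N$, so the hypotheses $n \ge 200(pqn)^3$ and $n \ge 8p(qn)^2$ are easily secured by taking $\delta$ small enough (with respect to $N$ and $p$), since $n = A_N/\delta^2$. Using the reverse inequality $1 - e^{-x} \ge x/2$ for $x$ small, the remaining hypothesis $pqn \ge 32\log(1/p)$ reduces to $\lambda_c A_N \ge 64\log(1/p)$, which holds for every $N$ beyond a threshold depending on $p$. Theorem~\ref{varB} then yields
$$\Var_q\bigl(r_{\HH_{N,\delta}}(\hat B, p)\bigr) \;=\; O\bigl(p(\log 1/p)^2\bigr),$$
with constant independent of $N$, $\delta$, and $\HH_{N,\delta}$. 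Sending $\delta \to 0$ gives the same bound on $\VarB_{\lambda_c/p}(g_N)$ for every sufficiently large $N$, and letting $p \to 0$ concludes the proof.

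The main obstacle is the coupling/continuity step in the second paragraph: one must confirm that the continuous crossing event is faithfully captured by its cell-based discretization and that the discretized two-stage Poisson/$p$-thinning scheme genuinely realizes a $p$-subset of $\hat B$. Both ingredients — the general-position statement for $\eta$ and the vanishing probability of double cell occupancy by $B$ — are standard for Poisson processes and unit discs, but they must be combined into a single statement uniform enough in $\delta$ (and joint in $(B,\eta)$) to support the dominated-convergence passage used above.
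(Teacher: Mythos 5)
Your proposal is correct and follows essentially the same route as the paper: discretize $R_N$ (enlarged by a unit margin) into $\delta$-cells so that the set of occupied cells is exactly a $q$-subset with $q=1-e^{-\lambda_c\delta^2/p}$, apply Theorem~\ref{varB} to the crossing hypergraph, and then control the discretization error. The only difference is cosmetic: the paper bounds the probability that the coupling fails by an explicit bad event of probability at most $\sqrt{\delta}$ (double cell occupancy, pairs of points at distance $2\pm2\delta$, points at distance $1\pm\delta$ from $\partial R_N$), whereas you pass to the limit $\delta\to0$ via almost-sure agreement of the crossing indicators and bounded convergence --- both arguments work.
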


We shall deduce the proposition from Theorem~\ref{varB} via a straightforward discretization argument. Since the expression in~\eqref{21triv} in fact is independent of $p$ (as seen in~\eqref{eq:NSkey}), it follows that~\eqref{21triv} holds (see Section~\ref{T1sec} for more details).

Having described how we shall deduce Theorem~\ref{noise} from Theorems~\ref{BpNS} and~\ref{varB}, we continue this section with a few comments on their proofs, as well as a presentation of some results and methods we shall use for that purpose.

The proof of Theorem~\ref{varB}, given in Section~\ref{hypersec}, does not rely on the remainder of the paper, but follows instead from an inequality due to Bey~\cite{Bey} (see Section~\ref{hypersubsec} for a brief overview). The proof of noise sensitivity in the $\PP^B_p$ model (Theorem~\ref{BpNS}) is given in Section~\ref{T1sec}, and is based on the approach developed by Benjamini, Kalai, and Schramm~\cite{BKS}. Our main challenge will be to extend their method to the non-uniform case; in particular, we shall need to prove the BKS Theorem for biased product measure (Theorem~\ref{BKSp}), and to generalize a result linking the revealment of algorithms to the influence of bits (see Section~\ref{algsubsec}). In order to do so, we introduce a new method for reducing problems for biased product measure to the uniform case. This method is introduced in Section~\ref{hfsec}; in Section~\ref{BKSsec} we use it to prove the BKS Theorem for product measure, and in Section~\ref{Algsec} to complete the extension of the deterministic algorithm approach.



We begin by stating the key property of the Poisson Boolean model that we shall need.

\subsection{Non-triviality of crossing probabilities}

If the probability (in $\PB_{\lambda_c}$) of the crossing events $H( \eta, R_N , \bullet)$ were trivial, in the sense that it converged to $0$ or $1$ as $N \to \infty$, then Theorem~\ref{noise} would itself be trivial.  However, this is not the case.  Further, one may deduce, using Theorem~\ref{varB}, that if $p > 0$ is sufficiently small, then with probability close to~$1$ (in~$\PB_{\lambda_c/p}$) the same is true for the model~$\PP^B_p$, see Proposition~\ref{Bprop}.  This fact will be a vital tool in our proof of the noise sensitivity of this model, as it will allow us to bound the probability of the `one-arm event' (see Section~\ref{algsubsec}). Throughout $R_{a \times b}$ denotes the rectangle with side lengths $a$ and $b$, centred at the origin.

\begin{thm}[Alexander~\cite{Alex}]\label{nontriv}
For every $t>0$ there exists $c = c(t) > 0$ such that
$$c \, \le \, \PB_{\lambda_c}\Big( H \big( \eta, R_{N \times tN},\bullet \big)\Big) \, \le \, 1 - c$$
for every $N \in \N$.
\end{thm}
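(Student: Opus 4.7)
The plan is to follow the classical RSW strategy, adapted to the continuum setting as in Alexander's original paper. The argument splits naturally into two stages: first I would show that the crossing probabilities of $N \times N$ squares at $\lambda_c$ are bounded away from $0$ and $1$, and then I would extend these bounds to rectangles of arbitrary aspect ratio $t$ via an RSW-type gluing argument.

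For the square case, write $\sigma(N) := \PB_{\lambda_c}(H(\eta, R_{N \times N}, \bullet))$. I would argue both bounds by contradiction using a finite-size scaling and block-renormalization argument. For the lower bound, if $\sigma(N_0) < \epsilon$ for some large $N_0$ and very small $\epsilon > 0$, then tiling the plane by slightly enlarged $N_0 \times N_0$ boxes (the enlargement is needed to absorb unit-radius discs whose centres lie in a neighbouring box) yields stretched-exponential decay of the probability of long occupied paths. By continuity of $\PB_\lambda$ in $\lambda$, this decay survives at $\lambda = \lambda_c + \delta$ for some $\delta > 0$, precluding percolation there and contradicting the definition of $\lambda_c$. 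For the upper bound, if $\sigma(N) \to 1$, then by the horizontal/vertical symmetry of the model and the FKG inequality (which holds since $H(\eta, R, \bullet)$ is increasing in $\eta$), horizontal and vertical crossings of $R_{N \times N}$ coexist with probability tending to $1$; gluing such ``plus''-configurations in adjacent boxes then forces the existence of infinite clusters at some $\lambda = \lambda_c - \delta$, once again contradicting the definition of $\lambda_c$.

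With the square case in hand, I would extend to general rectangles $R_{N \times tN}$ via the classical RSW gluing lemma. For $t \geq 1$ the horizontal direction is short: the lower bound follows from the containment $R_{N \times N} \subset R_{N \times tN}$, while the upper bound follows by combining a hypothetical high-probability horizontal crossing of $R_{N \times tN}$ with its rotated counterpart (a vertical crossing of $R_{tN \times N}$) via FKG, once again leading to percolation below $\lambda_c$. For $t < 1$ the horizontal direction is long: here the standard RSW gluing combines $O(1/t)$ overlapping horizontal crossings of $N \times N$ squares, using FKG together with a reflection-symmetry argument applied to the overlap regions.

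The main obstacle is making the finite-size scaling step rigorous in the continuum. The Poisson Boolean model is not self-dual, so occupied crossings cannot simply be dualized into vacant ones as for bond percolation on $\ZZ^2$. In addition, the unit-radius discs introduce genuine short-range correlations between the configurations in adjacent boxes, which forces one to work with enlarged blocks and to verify correlation inequalities specifically adapted to the continuum setting. A quantitative finite-size scaling statement (of the Aizenman-Barsky-Newman or Grimmett-Marstrand type, adapted from the discrete case) is needed to convert a small crossing probability at one scale into the absence of an infinite cluster at a slightly supercritical intensity; this is the technical heart of Alexander's proof.
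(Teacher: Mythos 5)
Your overall strategy is a from-scratch reconstruction, whereas the paper simply combines three ingredients: Alexander's characterization (Theorem~\ref{thm:Alex}: percolation of the occupied, resp.\ vacant, space is equivalent to the $3N\times N$ crossing probability exceeding $1-\theta$ at some scale), the fact that neither the occupied nor the vacant space percolates at $\lambda_c$ (Corollary~\ref{Cor:Alex}, via continuity of crossing probabilities in $\lambda$, so that condition $(d)$ defines an open set), and the extension of the characterization to rectangles of arbitrary aspect ratio (Theorem~\ref{Alex:ext}, proved by a subdivision argument together with Roy's Vacant RSW Theorem). The two-sided bound then follows because the complement of $H(\eta,R,\bullet)$ is exactly $V(\eta,R,\circ)$: failure of the upper bound forces occupied percolation at $\lambda_c$, failure of the lower bound forces vacant percolation at $\lambda_c$. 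Your lower bound for squares (small crossing probability at one scale $\Rightarrow$ a subcritical one-dependent renormalization $\Rightarrow$ no percolation on an open set of intensities) is a legitimate alternative and does not need RSW, since the hard crossing of a $3N\times N$ rectangle is dominated by the crossing of an $N\times N$ square.

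The genuine gaps are in your gluing steps, and they all have the same source: you cannot pass from crossings of squares to crossings of elongated rectangles by FKG and symmetry alone --- that implication \emph{is} the RSW theorem, and you invoke RSW only at the final aspect-ratio stage. Concretely: $(1)$ in your square upper bound, ``plus''-configurations in adjacent (or overlapping) $N\times N$ boxes need not connect to one another; building an infinite cluster requires long-way crossings of $3N\times N$ rectangles with probability near $1$, which from near-$1$ square crossings requires an RSW theorem applied to the \emph{vacant} space (Roy's theorem) --- this is precisely what the paper's proof of Theorem~\ref{Alex:ext} uses, together with a delicate subdivision of $R_{3N\times N}$ into thin strips. $(2)$ Your $t\ge1$ upper bound fails as stated: a horizontal (short-way) crossing of $R_{N\times tN}$ may live entirely in the region $y>N/2$ and therefore need not meet the vertical (short-way) crossing of the rotated rectangle $R_{tN\times N}$, so FKG buys you nothing; moreover a horizontal crossing of $R_{N\times tN}$ does not restrict to a crossing of any $N\times N$ square, so the square bound does not transfer trivially. $(3)$ Relatedly, you have misplaced the technical heart: it is not a finite-size-scaling criterion (that part is routine) but the occupied and vacant RSW theorems of Alexander and Roy, which are needed exactly because, as you note, the model is not self-dual --- the vacant process must be controlled separately even though the crossing \emph{events} are perfectly complementary. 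Finally, the uniform constant ``for every $N\in\N$'' also needs the trivial observation that each individual crossing probability lies in $(0,1)$.
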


Theorem~\ref{nontriv} is in fact a slight extension of~\cite[Theorem~3.4]{Alex}, but it follows by the same argument. For completeness, we shall sketch the proof in Section~\ref{nontrivsec}.  From this bound, together with Theorem~\ref{varB}, we shall deduce the following bound (see Section~\ref{T1sec}).

\begin{prop}\label{Bprop}
For every $t,\gamma > 0$ there exist constants $c' = c'(t) > 0$ and $p_0=p_0(t,\gamma) > 0$ such that if $p\in(0,p_0)$, then
$$\PB_{\lambda_c/p}\Big( \Pr^B_p\big( H\big( \eta, R_{N \times tN}, \bullet \big) \big) \not\in(c',1-c') \Big) \, < \, \gamma$$
for every sufficiently large $N \in \N$.
\end{prop}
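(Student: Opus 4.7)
The strategy is to combine Theorem~\ref{nontriv} with a rectangular analogue of Proposition~\ref{varBp} and apply Chebyshev's inequality.

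First, by Poisson thinning (exactly as in~\eqref{average}, but applied to the rectangle $R_{N\times tN}$),
\[
\ExB_{\lambda_c/p}\big[\Pr^B_p(H(\eta, R_{N\times tN}, \bullet))\big] \,=\, \PB_{\lambda_c}\big(H(\eta, R_{N\times tN}, \bullet)\big),
\]
and by Theorem~\ref{nontriv} this mean lies in $[c(t), 1-c(t)]$ for every $N$.

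Second, I would establish the rectangular analogue of Proposition~\ref{varBp}, namely
\[
\lim_{p\to 0}\,\limsup_{N\to\infty}\,\VarB_{\lambda_c/p}\Big(\Pr^B_p\big(H(\eta, R_{N\times tN}, \bullet)\big)\Big) \,=\, 0.
\]
The plan is to partition $R_{N\times tN}$ into $n$ axis-aligned squares of side length $\delta > 0$ and identify each square with a vertex of $[n]$. Let $B'\subset[n]$ denote the (random) set of squares containing at least one point of the Poisson process $B$; then $B'$ is a $q$-subset of $[n]$ with $q = 1-e^{-\lambda_c \delta^2/p}$, so $pqn \to \lambda_c t N^2$ as $\delta\to 0$. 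Let $\HH\subset\{0,1\}^n$ be the hypergraph of those $A\subset[n]$ for which the union of unit discs centred at the centres of the squares indexed by $A$ contains a horizontal crossing of $R_{N\times tN}$. As $\delta\to 0$, the quantity $r_\HH(B',p)$ converges in $L^2$ to $\Pr^B_p(H(\eta, R_{N\times tN}, \bullet))$. For $\delta$ sufficiently small (depending on $p, N, t$), the hypotheses $n \ge 200(pqn)^3$, $n \ge 8p(qn)^2$ and $pqn \ge 32\log(1/p)$ of Theorem~\ref{varB} are satisfied, and the theorem yields the bound $\Var_q(r_\HH(B',p)) = O(p(\log(1/p))^2)$ uniformly in $\HH$ (and hence uniformly in $N$). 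Passing to the limit $\delta\to 0$ transfers this bound to the continuum variance, and then $p\to 0$ gives the displayed limit.

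Third, Chebyshev's inequality yields
\[
\PB_{\lambda_c/p}\Big(\big|\Pr^B_p(H(\eta, R_{N\times tN}, \bullet)) - \PB_{\lambda_c}(H(\eta, R_{N\times tN}, \bullet))\big| > c(t)/2\Big) \,\le\, \frac{4\,\VarB_{\lambda_c/p}\big(\Pr^B_p(H(\eta, R_{N\times tN}, \bullet))\big)}{c(t)^2},
\]
and setting $c' := c(t)/2$, choosing $p_0 = p_0(t, \gamma)$ small enough that the variance makes the right-hand side at most $\gamma$ for all sufficiently large $N$, completes the argument.

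The only real obstacle is the rectangular analogue of Proposition~\ref{varBp}. Since $pqn = \lambda_c t N^2$ grows with $N$, the constraint $n \ge 200(pqn)^3$ from Theorem~\ref{varB} forces the mesh $\delta$ to be extremely small relative to $N$. However, because the bound in Theorem~\ref{varB} is \emph{uniform in the hypergraph~$\HH$}, we are free to take $\delta \to 0$ after applying the theorem, so the $L^2$ discretization error disappears in the limit and the continuum variance inherits the same $O(p(\log(1/p))^2)$ bound.
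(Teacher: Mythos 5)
Your proposal is correct and follows essentially the same route as the paper: the ``rectangular analogue of Proposition~\ref{varBp}'' you describe is exactly Proposition~\ref{strongvarBp}, which the paper proves by the same discretization (Lemma~\ref{gridvar} applying Theorem~\ref{varB} to the crossing hypergraph on a $\delta$-mesh, plus Lemma~\ref{lem:disc} controlling the coupling error by a bad event of probability $O(\sqrt{\delta})$), and the deduction of Proposition~\ref{Bprop} via Theorem~\ref{nontriv} and Chebyshev's inequality is identical. The only detail you leave implicit is why the discrete and continuum crossing probabilities agree outside a small bad event (points sharing a square, pairs at distance near $2$, points near the boundary), but this is the routine argument the paper spells out.
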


We next turn to one of the key new techniques we introduce in this paper.

\subsection{A new method for proving results for biased product measures}\label{hfsec}

We outline here a new method for deducing results in the setting of a density $p$ product measure from the uniform case (i.e., the case $p=1/2$).  The idea is the following.  Rather than considering directly the function $f \colon \{0,1\}^n \to [0,1]$ where $\{0,1\}^n$ is endowed with density $p$ product measure, we consider a related function $h_f \colon \{0,1\}^n \to [0,1]$ where $\{0,1\}^n$ is endowed with uniform measure.  This function $h_f$ is obtained from $f$ by a smoothing (or averaging) operation.
Assume for now that $p\le 1/2$.
Given any function $f : \{0,1\}^n \to [0,1]$ we define $h_f$ as
\begin{equation}\label{def:h}
h_f(X) \, := \, \E\big[ f(Z) \,\big|\, X \big],
\end{equation}
where $Z\in\{0,1\}^n$ is obtained as a $2p$-subset of $X$. More formally, independently of $X$, pick a $2p$-subset $Y$ of $[n]$ (i.e., $\P(Y_i = 1) = 2p$ for each $i \in [n]$, all independently) and define $Z$ as the coordinate wise product of $X$ and $Y$, i.e.,
\begin{equation}\label{def:XYZ}
Z_i:=X_iY_i\quad\text{for each }i\in[n].
\end{equation}

Observe that $h_f: \{0,1\}^n \to [0,1]$ and that if $X$ is uniformly chosen, then $Z$ is a $p$-subset of $[n]$; Assume from now on that $X$ is uniformly chosen in $\{0,1\}^n$. By relating various parameters of $f(Z)$ and $h_f(X)$,
results about one may be deduced from results concerning the other. The connection between $f$ and $h_f$ is given by the following proposition. In the interest of generality we state the proposition for all $p\in (0,1)$.  For $p>1/2$ we define $Z$ as a $p$-subset containing $X$.  Formally, we set $Z_i = 1-(1-X_i)Y_i$ for each $i$, where $Y$ is a $2(1-p)$-subset of $[n]$. Following the standard convention, by $f$ being {\em monotone} we mean that $f(\omega)\le f(\omega')$ for every $\omega,\omega' \in\{0,1\}^n$ such that $\omega_j \le \omega'_j$ for every $j\in[n].$

\begin{prop}\label{prop:hf}
Let $f \colon \{0,1\}^n \to [0,1]$ and $p \in(0,1)$, and set $\bar{p}=\min\{p,1-p\}$.
\begin{itemize}
\item[$(i)$] If $f$ is monotone then $h_f$ is monotone.\\[-1.5ex]
\item[$(ii)$] $\Inf_{1/2,i} (h_f) \, \le \, 2\bar{p}\, \Inf_{p,i}(f)$, and moreover equality holds if $f$ is monotone.\\[-1.5ex]
\item[$(iii)$] $(f_n)_{n \ge 1} \textup{ is NS}_p \; \Leftrightarrow \; \big(h_{f_n}\big)_{n \ge 1}\textup{ is NS.}$\\[+0.5ex]
Moreover, if $p \neq 1/2$ then this is also equivalent to $\ds\lim_{n \to \infty} \Var\big(h_{f_n}\big)=0$.
\end{itemize}
\end{prop}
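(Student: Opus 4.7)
The plan is to handle the three parts in order, with the main work—a short Fourier calculation—confined to part~(iii). By symmetry I assume throughout that $p \le 1/2$ (the case $p > 1/2$ is handled by an identical computation using the dual formula $Z_i = 1 - (1-X_i)Y_i$ with $Y$ a $2(1-p)$-subset), so $\bar p = p$ and $Z = XY$ with $X$ uniform on $\{0,1\}^n$ and $Y$ an independent $2p$-subset of $[n]$. Part~(i) is then immediate: if $X \le X'$ coordinatewise then $XY \le X'Y$ pointwise in $Y$, and monotonicity of $f$ followed by averaging over $Y$ gives $h_f(X) \le h_f(X')$.

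For part~(ii), the key observation is that on the event $\{Y_i = 0\}$ one has $(X^{i\to 0}Y)_i = (X^{i\to 1}Y)_i = 0$, so only the event $\{Y_i = 1\}$ (of probability $2p$) contributes to $h_f(X^{i\to 0}) - h_f(X^{i\to 1})$. Setting $U_j := X_j Y_j$ for $j \ne i$, I obtain
\[
h_f(X^{i\to 0}) - h_f(X^{i\to 1}) \,=\, 2p\cdot \E\big[f(U^{i\to 0}) - f(U^{i\to 1})\,\big|\, Y_i = 1\big],
\]
with the expectation over $Y$ and $X$ held fixed. Since $U$ depends only on $(Y_j)_{j\ne i}$ the conditioning on $Y_i$ may be dropped; applying $|\E[\cdot]|\le \E|\cdot|$ inside and then averaging over $X$---after which each $U_j = X_j Y_j$ is independent $\mathrm{Bernoulli}(p)$, so $U$ is a genuine $p$-subset of $[n]\setminus\{i\}$---yields $\Inf_{1/2,i}(h_f) \le 2\bar p\,\Inf_{p,i}(f)$. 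When $f$ is monotone, $f(U^{i\to 1}) - f(U^{i\to 0}) \ge 0$ deterministically, so the triangle inequality is an equality.

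For part~(iii), write $\psi_S^{(p)}(Z) := \prod_{i \in S}(Z_i - p)/\sqrt{p(1-p)}$ for the $p$-biased orthonormal Fourier basis and $\chi_S(X) := \prod_{i \in S}(1 - 2X_i)$ for the uniform one, so that $f = \sum_S \tilde f(S)\psi_S^{(p)}$ and $h_f = \sum_S \widehat{h_f}(S)\chi_S$. Using $\E_{Y_i}[X_i Y_i - p] = p(2X_i - 1)$ and independence of the $Y_i$'s, a one-line computation gives
\[
h_{\psi_S^{(p)}}(X) \,=\, (-1)^{|S|}\Big(\frac{p}{1-p}\Big)^{|S|/2}\chi_S(X),
\]
and hence $\widehat{h_f}(S)^2 = \big(\bar p/(1-\bar p)\big)^{|S|}\tilde f(S)^2$ for every $S \subseteq [n]$. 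Substituting into the standard identity $\E[g(X)g(X^{\eps})] - \E[g(X)]^2 = \sum_{S\ne\emptyset}(1-\eps)^{|S|}\widehat{g}(S)^2$ and its $p$-biased analogue, I obtain, for every $\eps \in (0,1)$,
\[
\E\big[h_f(X)h_f(X^{\eps})\big] - \E[h_f(X)]^2 \,=\, \E_p\big[f(Z)f(Z^{\eps''})\big] - \E_p[f(Z)]^2,
\]
where $\eps'' := 1 - (1-\eps)\bar p/(1-\bar p)$.

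Since NS and NS$_p$ are each equivalent to the relevant correlation limit vanishing for a single value of the noise parameter in $(0,1)$ (the Fourier criterion noted just after Definition~\ref{def:NS}), and the map $\eps \mapsto \eps''$ sends $(0,1)$ into $(0,1)$, the displayed identity gives the equivalence of NS$_p$ for $(f_n)$ with NS for $(h_{f_n})$. When $p \ne 1/2$ one further has $\eps''_0 := 1 - \bar p/(1-\bar p) \in (0,1)$, so setting $\eps = 0$ in the identity gives $\Var(h_f) = \E_p[f(Z)f(Z^{\eps''_0})] - \E_p[f(Z)]^2$, which shows that $\lim_n \Var(h_{f_n}) = 0$ is equivalent to NS$_p$. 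The one conceptual subtlety---and the reason this variance characterization must fail at $p = 1/2$---is that in the uniform case $\bar p/(1-\bar p) = 1$, so $\eps''_0 = 0$ and the identity becomes vacuous (consistently with $h_f = f$ when $p = 1/2$).
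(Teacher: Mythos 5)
Your proof is correct and follows essentially the same route as the paper: part (i) by averaging over $Y$, part (ii) by the same conditioning on $Y_i$ (only $\{Y_i=1\}$ contributes, giving the factor $2\bar p$), and part (iii) via the spectral relation $\hat{h}_{f}(S)^2=\big(\bar p/(1-\bar p)\big)^{|S|}\hat{f}^p(S)^2$, which is exactly the paper's Proposition~\ref{prop:hfspectrum} computed in the forward direction $\E\big[\psi_S^{(p)}(Z)\,\big|\,X\big]$ rather than the paper's $\E\big[\chi_S(X)\,\big|\,Z\big]$ (the sign $(-1)^{|S|}$ is immaterial). Your packaging of (iii) as an exact identity between the noise correlation of $h_f$ at level $\eps$ and that of $f$ at level $\eps''=1-(1-\eps)\bar p/(1-\bar p)$ --- with the variance statement being the $\eps=0$ case, which is nontrivial precisely when $\bar p/(1-\bar p)<1$ --- is a slightly slicker presentation of the same computation, which the paper instead routes through the truncated-sum criterion of Lemma~\ref{lem:NSequiv}.
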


We remark that the random variables $X$ and $Z$ realize the maximal coupling between the uniform measure and product measure of density $p$ on $\{0,1\}^n$, or similarly, between a uniformly chosen subset and a $p$-subset of $[n]$.

Other reduction methods have previously been used for similar purposes. See e.g.~\cite{BKKKL,Fried,Kell,KMS} for results in this direction.

\subsection{The deterministic algorithm method}\label{algsubsec}

In order to prove that $(f^B_N)_{N\ge 1}$ is NS$_p$ for $\PB_{\lambda_c/p}$-almost every $B$ (Theorem~\ref{BpNS}), we shall use the `algorithm method', which was also introduced by Benjamini, Kalai and Schramm~\cite{BKS} in the case $p = 1/2$. (We would like to thank Jeff~Steif for pointing out to us that the approach in~\cite{BKS} can be synthesized in the way it is presented here.)  Given a function $f \colon \{0,1\}^n \to [0,1]$, let $\A^*(f)$ denote the collection of deterministic algorithms which determine $f$.\footnote{An algorithm is simply a rule which, given the information about $\omega$ received so far, tells you which bit of $\omega$ to query next. It determines $f$ if it determines $f(\omega)$ for any input $\omega \in \{0,1\}^n$.}

\begin{defn}[Revealment of an algorithm]
Let $f \colon \{0,1\}^n \to [0,1]$ and let $\A \in \A^*(f)$. For each $p \in (0,1)$ and $j\in [n]$, define
$$\delta_j(\A,p) \, := \, \Pr_p\big( \A \textup{ queries bit $j$ when determining $f(\omega)$}\big),$$
where $\omega\in\{0,1\}^n$ is chosen according to $\Pr_p$.
The \emph{revealment} $\delta_K(\A,p)$ of $\A$ with respect to a set $K\subset[n]$ is defined to be $\max\{ \delta_j(\A,p) : j \in K \}$.
\end{defn}

Using Theorem~\ref{BKSp}, we shall prove the following theorem, which generalizes the method of~\cite{BKS} to the non-uniform set-up. 

\begin{thm}\label{algthm}
Let $r\in \N$ be fixed, and let $(f_n)_{n \ge 1}$ be a sequence of monotone functions $f_n: \{0,1\}^n \to [0,1]$.  For each $n \in \N$, let $\A_1,\ldots,\A_r \in \A^*(f_n)$ and let $[n] = K_1 \cup \ldots \cup K_r$. If, for a fixed $p \in (0,1)$, we have
$$
\delta_{K_i}(\A_i,p) \big( \log n \big)^6 \; \to \; 0
$$
as $n \to \infty$ for each $i \in [r]$, then $(f_n)_{n \ge1}$ is \textup{NS}$_p$.
\end{thm}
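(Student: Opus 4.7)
The plan is to reduce to the uniform ($p=1/2$) case via Proposition~\ref{prop:hf} and then apply a Schramm--Steif style algorithmic Fourier bound to the smoothed functions $h_{f_n}$ at the uniform measure. By Proposition~\ref{prop:hf}(iii) it suffices to prove that $(h_{f_n})_{n\ge 1}$ is NS at density $1/2$, and by Proposition~\ref{prop:hf}(i) each $h_{f_n}$ is monotone.

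For each $j\in[r]$ I would construct a randomized algorithm $\B_j$ acting on a uniformly chosen $X\in\{0,1\}^n$, as follows. Internally, $\B_j$ samples a $2p$-subset $Y$ of $[n]$; it then simulates $\A_j$ on the input $Z := X\odot Y$ (the coordinatewise product defined in \eqref{def:XYZ}) by the natural rule: whenever $\A_j$ requests a bit $Z_i$, if $Y_i = 0$ then $Z_i = 0$ is returned without querying $X$, whereas if $Y_i = 1$ then $X_i$ is queried and $Z_i = X_i$ is returned. At termination $\B_j$ outputs $f_n(Z)$. Since the law of $Z = X\odot Y$ is that of a $p$-subset of $[n]$, one has $\Pr(\B_j \text{ queries } X_i) \le \delta_i(\A_j, p)$, and the revealment of $\B_j$ on $K_j$ at density $1/2$ is therefore bounded by $\delta_{K_j}(\A_j, p)$. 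Moreover, $\E[\B_j(X, Y) \mid X] = h_{f_n}(X)$, so $\B_j$ serves as an unbiased estimator for $h_{f_n}$.

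I would then establish a Schramm--Steif type Fourier bound for $h_{f_n}$ using the algorithms $\B_j$. The starting identity is $\hat h_{f_n}(S) = \E[\B_j(X,Y)\chi_S(X)]$, and the key observation is that the contribution from the event $\{S\not\subset J\}$ vanishes (where $J$ is the set of $X$-bits queried by $\B_j$), since unqueried bits of $X$ are independent of everything seen by the algorithm. A coupling/martingale argument analogous to Schramm--Steif then yields, for every $k\ge 1$ and $j\in[r]$,
\[
\sum_{|S|=k,\, S\cap K_j\ne\emptyset} \hat h_{f_n}(S)^2 \;\le\; k\,\delta_{K_j}(\A_j,p).
\]
Summing over $j$ (each nonempty $S$ is counted at most $r$ times) and setting $K_0 := \lfloor (\log n)^2 \rfloor$, the hypothesis $\delta_{K_j}(\A_j,p)(\log n)^6 \to 0$ gives
\[
\sum_{0<|S|\le K_0}\hat h_{f_n}(S)^2 \;\le\; r K_0^2 \max_j \delta_{K_j}(\A_j, p) \;=\; o(1),
\]
while the tail $\sum_{|S|>K_0}\hat h_{f_n}(S)^2(1-\eps)^{|S|} \le (1-\eps)^{K_0} \to 0$ for every $\eps > 0$. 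Hence $(h_{f_n})$ is NS at density $1/2$, and the result follows.

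The main obstacle is adapting the Schramm--Steif Fourier bound to the randomized-output setting above: the original statement applies to algorithms that compute $f$ exactly, whereas $\B_j$ outputs $f_n(X\odot Y)$, a random variable whose conditional mean given $X$ equals $h_{f_n}(X)$. One must verify that the conditional-independence input to the martingale argument (on which SS rests) survives when the algorithm's query path depends on the auxiliary randomness $Y$ and when the output is not the function value itself---and in particular, that conditioning on $Y$ and averaging does not blow up the effective revealment. A direct attack via Theorem~\ref{BKSp}, combining Proposition~\ref{prop:hf}(ii) with $\Inf_{p,i}(f_n) \le \delta_i(\A_j,p)$ for $i \in K_j$, gives only $I\!I_{1/2}(h_{f_n}) \le 4\bar{p}^2 n \max_j \delta_{K_j}(\A_j,p)^2$, which would require the much stronger assumption $\delta = o(n^{-1/2})$; so a spectral refinement of this kind is essential.
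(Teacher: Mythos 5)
Your reduction to the uniform case via $h_{f_n}$ and the simulation of $\A_j$ on $Z=X\odot Y$ (so that unqueried bits of $X$ stay uniform and the revealment of $\B_j$ on $K_j$ is at most $\delta_{K_j}(\A_j,p)$) are sound, and they are very much in the spirit of the machinery the paper sets up in Section~\ref{hfsec}. But the proof hinges on the displayed inequality
$\sum_{|S|=k,\,S\cap K_j\ne\emptyset}\hat h_{f_n}(S)^2\le k\,\delta_{K_j}(\A_j,p)$,
and this is where the argument has a genuine gap. The Schramm--Steif bound (Proposition~\ref{prop:SS}, which the paper only quotes, in Section~\ref{sec:QNS}, for the quantitative corollary) controls $\sum_{|S|=k}\hat f(S)^2$ by $k\|f\|_2^2\,\delta_{[n]}(\A,1/2)$ --- the \emph{global} revealment --- for an algorithm that \emph{determines} $f$. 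You need a version that is localized in two ways at once: the right-hand side must involve only $\delta_{K_j}$, while the left-hand side ranges over all $S$ that merely \emph{intersect} $K_j$ (so $S$ may contain many coordinates on which $\A_j$ has revealment close to $1$). This is not a routine modification: the martingale/second-moment argument behind Proposition~\ref{prop:SS} charges each $S$ through \emph{all} of its queried coordinates, not through a single well-chosen coordinate in $K_j$, and obtaining localized control of the spectral sample is precisely the kind of refinement that required entirely new methods in~\cite{GPS}. The elementary bound one does get for free, $\sum_{S\cap K\ne\emptyset}\hat h(S)^2\le\sum_{i\in K}\Inf_{1/2,i}(h)\le |K|\,\delta_K$, loses a factor $|K|$ rather than $k$, which is fatal here. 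Note that the whole reason the theorem is stated with a partition $K_1\cup\dots\cup K_r$ and $r$ different algorithms is that no single algorithm has small revealment everywhere; so the localization you are assuming is exactly the crux of the problem, and flagging it as ``the main obstacle'' does not discharge it.

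For comparison, the paper avoids spectral localization entirely and instead follows the BKS majority-correlation route: Proposition~\ref{prop:InfMKcorr} bounds $\sum_{j\in K}\Inf_{p,j}(f_n)$ by $\sqrt{|K|}$ times $\E[f_n(Z)M_K(X)]$ (up to a logarithmic correction), Proposition~\ref{prop:algMKcorr} bounds that correlation by $C\,\delta_K(\A,p)^{1/3}\log n$ --- and it is \emph{there} that the localization to $K$ is achieved, via a coupling which makes the bits of $X$ on $K\setminus V$ independent of the algorithm's run, so that the majority on $K$ is decided by unqueried coordinates regardless of how much is revealed outside $K$. The optimization Lemma~\ref{lma:opt} then converts these $\ell^1$-type bounds on ordered partial sums of influences into the $\ell^2$ bound $I\!I_p(f_n)=O\bigl(\max_i\delta_{K_i}(\A_i,p)^{2/3}(\log n)^4\bigr)\to0$, and Theorem~\ref{BKSp} finishes. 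If you want to salvage your approach, you would need to either prove the localized randomized Schramm--Steif inequality from scratch or replace it by the majority-correlation mechanism; as written, the key estimate is unproved and not available by citation.
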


We aim to apply Theorem~\ref{algthm} to deduce noise sensitivity in the (discrete) model $\PP^B_p$. For this we shall need to define a deterministic algorithm which determines $f^B_N$, and show that it has low revealment for most sets $B$ (with respect to $\PB_{\lambda_c/p}$). The algorithm which we shall use is the continuum analogue of that used in~\cite{BKS}. Let $\eta$ be a $p$-subset of $B$. Roughly speaking, we `pour water' into the left-hand side of the square $R_N$, and allow water to infiltrate the occupied space $D(\eta)$. Thus, an element in $B$ will be queried only if it becomes wet via a path in $D(\eta)$ reaching from the left-hand side. For elements in the left half of $R_N$ we pour water into the right-hand side (see Section~\ref{T1sec} for a precise definition).

It is easy to see that the probability that an element $x \in B$ is queried by $\A$ is at most the probability of the corresponding `one-arm event', i.e., the event that there is a path in $D(\eta)$ from $x$ to the boundary of a square centered therearound (for background on arm-events, see e.g.~\cite{BR}). In the original Poisson Boolean model, a bound on this probability can be deduced from Theorem~\ref{nontriv}.
However, in order to apply Theorem~\ref{algthm} we need a bound for the model $\PP^B_p$; we obtain such a bound using Proposition~\ref{Bprop}.

In order to apply Proposition~\ref{Bprop}, we simply surround each point $x \in B\cap R_N$ by $c \log N$ disjoint annuli, and show that, with \emph{very} high probability (in $\PB_{\lambda_c/p}$), at least half of them are `good', in the sense that the probability (in $\Pr^B_p$) that there is a vacant loop around $x$ is at least $c''$, for some small constant $c'' > 0$. It will then follow that (for $\PB_{\lambda_c/p}$-almost every $B$), every $x \in B\cap R_N$ has probability at most $N^{-\delta}$ (in $\Pr^B_p$) of being queried by $\A$, when $N$ is large (see Section~\ref{T1sec}).

\subsection{Hypergraphs}\label{hypersubsec}

Theorem~\ref{varB} provides a very general bound on the variance that arises in settings where two stages of randomness are used to select a random subset. The main step in the proof of Theorem~\ref{varB} is to prove a variance bound (Proposition~\ref{var}) for the case where the random sets $A \subset B$ have fixed sizes $m\le k$.  It is then relatively straightforward to deduce a corresponding bound on $\Var_{q}\big( r_{\HH}(B,p) \big)$, and thus prove Theorem~\ref{varB}, by bounding other factors that might contribute towards the variance.  These bounds are obtained using Chernoff's inequality (see Section~\ref{hypersec}).

We shall control $\Var\big( X_m(B_k) \big)$, where $B_k$ is a uniformly chosen $k$ element subset of $[n]$ and $X_m(B)=X_m(B,\HH)$ counts the number of hypergraph edges of size $m$ contained in $B\subset [n]$, using the following theorem of Bey~\cite{Bey} concerning the sum of squares of degrees in hypergraphs. It generalized results of Ahlswede and Katona~\cite{AK} and de Caen~\cite{dC}, and answered a question of Aharoni~\cite{Aha}.

Let $e(\HH)$ denote the number of edges in a hypergraph $\HH$, and, given a set $T \subset [n]$, let $d_\HH(T)$ denote the \emph{degree} of $T$ in $\HH$, i.e., the number of edges of $\HH$ which contain $T$. The following result bounds the sum of the squares of the degrees over sets of size $t$ in an $m$-uniform hypergraph, i.e., one in which all edges have size $m$. By convention, we let ${n\choose k}:=0$ for $k<0$ and $k>n$.

\begin{Bey}[Bey~\cite{Bey}]
Let $\HH$ be an $m$-uniform hypergraph on $n$ vertices, and let $t \in [m]$. 
Then
$$d_2\big( \HH,t \big) \; := \; \sum_{T \subset [n],\, |T| = t} d_\HH(T)^2 \; \le \; \frac{ {m \choose t} { {m-1} \choose t} }{ {{n-1} \choose t} } e(\HH)^2 \,+\, {{m-1} \choose {t-1}} {{n-t-1} \choose {m-t}} e(\HH).$$
\end{Bey}

To see how Bey's inequality is related to the variance of $X_m(B_k)$, observe that $d_\HH(T)^2$ counts the number of (ordered) pairs of edges of size $m$ in $\HH$ which both contain $T$. Thus, summing over $t$ (with appropriate weights), we obtain an upper bound on $\Ex\big( X_m(B_k)^2 \big)$.

\section{Non-triviality of the crossing probability at criticality}\label{nontrivsec}

In this section we shall sketch the proof (from~\cite{Alex}) of Theorem~\ref{nontriv}, which says that at criticality, the probability of crossing a rectangle is bounded away from zero and one. The proof is based on the RSW Theorem for the Poisson Boolean model, which was proved by Roy~\cite{Roy} for the vacant space (see below), and by Alexander~\cite{Alex} for the occupied space.

Recall that $\PB_\lambda$ indicates that the configuration $\eta\subset\RR^2$ is chosen according to a Poisson process with intensity $\lambda$.
Let $V\big( \eta, R, \circ \big)$ denote the event that there is a vacant vertical crossing of $R$, i.e., a crossing using only points of $R_N \setminus D(\eta)$, and define $V\big( \eta, R, \bullet \big)$ (vertical crossing in $D(\eta) \cap R_N$) and $H\big( \eta, R, \circ \big)$ similarly.

\begin{RSWvac}[Roy~\cite{Roy}, see Theorem~4.2 of~\cite{MR}]\label{RSWvac}
For every $\delta, t, \lambda > 0$, there exists an $\eps = \eps(\delta,t, \lambda) > 0$ such that the following holds for every $a,b,c > 0$ with $c \le 3a/2$. If
$$\PB_{\lambda}\Big( H\big( \eta,R_{a \times b},\circ \big) \Big) \,\ge\, \delta,\quad \text{and} \quad\PB_\lambda\Big(H \big( \eta, R_{b \times c},\circ \big) \Big) \,\ge\, \delta,$$
then $\PB_\lambda\Big( H\big( \eta,R_{ta \times b},\circ \big) \Big) \ge \eps$.
\end{RSWvac}

We remark that this result was in fact proved in substantially greater generality: it holds for random radii, with arbitrary distribution on $(0,r)$ (where $r \in \RR_+$ is arbitrary). 
Alexander~\cite{Alex} proved the corresponding statement for the occupied space (for fixed radii), and used this result to prove the following characterization.

\begin{thm}[Theorem~3.4 of~\cite{Alex}]\label{thm:Alex}
In the Poisson Boolean model, there exists $\theta > 0$ such that the following are equivalent:
\begin{enumerate}
\item[$(a)$] There is almost surely an infinite occupied component.\\[-1.5ex]
\item[$(b)$] $\ds\lim_{N \to \infty}\PB_\lambda\Big( H \big( \eta,R_N,\bullet \big) \Big) = 1$.\\[-1.5ex]
\item[$(c)$] $\ds\lim_{N \to \infty}\PB_\lambda\Big( H \big( \eta,R_{3N \times N}, \bullet \big) \Big) = 1$.\\[-1.5ex]
\item[$(d)$] There exists $N \in \N$ such that $\PB_\lambda\Big( H \big( \eta,R_{3N \times N},\bullet \big)  \Big) > 1 - \theta$.
\end{enumerate}
The same holds true if `occupied' is changed for `vacant' throughout.
\end{thm}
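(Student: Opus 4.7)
My plan follows a classical RSW + FKG strategy, establishing the equivalence through the cycle $(d)\Rightarrow(c)\Rightarrow(b)\Rightarrow(a)\Rightarrow(c)$, with $(c)\Rightarrow(d)$ immediate. The trivial direction $(c)\Rightarrow(b)$ holds because a horizontal occupied crossing of $R_{3N\times N}$ is confined to the strip $|y|\le N/2$; taking the portion of the path between its first crossing of $x=-N/2$ and the subsequent first crossing of $x=N/2$ produces a horizontal occupied crossing of the central $N\times N$ subsquare, so by translation invariance $\PB_\lambda(H(\eta,R_N,\bullet))\ge \PB_\lambda(H(\eta,R_{3N\times N},\bullet))$.

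For $(d)\Rightarrow(c)$ I would run an RSW bootstrap. Fix $\theta$ small; given the hypothesis at some scale $N_0$, apply Alexander's Occupied RSW Theorem~\cite{Alex} to obtain uniform lower bounds on horizontal crossings of $R_{tN_0\times N_0}$ for each $t\ge 3$. Patching two such crossings together with a vertical crossing of an overlapping rectangle via FKG then yields a horizontal crossing of $R_{3\cdot 2N_0\times 2N_0}$ whose failure probability is strictly smaller than $\theta$. Iterating the doubling scheme drives the failure probability to zero, giving $(c)$ along $N=2^kN_0$; monotonicity and one further application of RSW extend convergence to every $N$.

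For $(b)\Rightarrow(a)$ (and hence $(c)\Rightarrow(a)$ a fortiori), four applications of FKG combined with RSW yield that in every annular frame $R_{3N}\setminus R_N$ the probability of an occupied circuit is uniformly bounded below. Taking scales $N_k=2^k$ separated widely enough that the corresponding circuit events depend on disjoint regions and are therefore independent in the Poisson process, Borel--Cantelli produces infinitely many nested occupied circuits almost surely. A further FKG argument then furnishes occupied radial crossings of the intermediate annuli, merging the nested circuits into an unbounded connected occupied component.

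The remaining direction $(a)\Rightarrow(c)$ closes the cycle. I would combine uniqueness of the infinite occupied cluster (a Burton--Keane argument applies to the Poisson Boolean model) with a Zhang-type symmetry argument: were $\PB_\lambda(H(\eta,R_{3N\times N},\bullet))$ to stay bounded away from $1$ along a subsequence, then by the four-fold symmetry of a surrounding square and FKG one could, with positive probability, exhibit four disjoint infinite vacant paths exiting through its four sides, contradicting uniqueness. The main obstacle will be the calibration of $\theta$ in $(d)\Rightarrow(c)$: the quantitative output of the Occupied RSW Theorem must suffice to strictly decrease the failure probability at each doubling step. The vacant-space version of the statement follows by the identical argument, substituting Roy's Vacant RSW Theorem~\cite{Roy} for its occupied counterpart.
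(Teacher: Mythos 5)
The paper does not prove this statement at all: it is quoted verbatim as Theorem~3.4 of Alexander~\cite{Alex}, and the only hint the paper gives about its proof is the remark (in the sketch of Theorem~\ref{Alex:ext}) that the implication $(d)\Rightarrow(a)$ ``follows by a straightforward Peierls-type argument'', i.e.\ a renormalization comparison with a highly supercritical $1$-dependent percolation. Your proposal replaces exactly that step by an RSW/FKG bootstrap for $(d)\Rightarrow(c)$, and this is where there is a genuine gap. The qualitative Occupied RSW Theorem converts the hypothesis $\PB_\lambda(H(\eta,R_{3N_0\times N_0},\bullet))>1-\theta$ only into crossings of long rectangles with probability at least some small $\eps>0$; it does not return probabilities close to $1$. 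Moreover, FKG applied to an intersection of $k$ increasing crossing events each of probability at least $1-\theta$ gives a lower bound $(1-\theta)^k$, i.e.\ a failure probability of order $k\theta$ --- \emph{larger} than $\theta$, not ``strictly smaller than $\theta$'' as your doubling step requires. So the bootstrap does not close, and you have in effect flagged but not resolved the one point where the threshold $\theta$ genuinely enters. The standard repair is the Peierls/renormalization argument: declare scale-$N_0$ blocks good when suitable $3N_0\times N_0$ and $N_0\times 3N_0$ crossings occur, note that good blocks form a $1$-dependent site percolation with parameter $>1-2\theta$, and for $\theta$ small compare with supercritical Bernoulli percolation to obtain both $(a)$ and crossing probabilities tending to $1$ at all large scales.

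A secondary weakness is in $(b)\Rightarrow(a)$. Under $(b)$, RSW and FKG give occupied circuits in the annuli $R_{3N_k}\setminus R_{N_k}$ with probability only bounded \emph{below} by a constant, and likewise for the radial connections. Second Borel--Cantelli then yields infinitely many circuit-containing annuli and infinitely many connected consecutive pairs, but not an infinite connected chain: between two successive successes there may be arbitrarily many failures, and nothing forces the surviving circuits to be linked to one another. To conclude one needs the relevant connection probabilities to tend to $1$ summably (which is again what the renormalization supplies), or one should route the implication through $(b)\Rightarrow(c)\Rightarrow(d)\Rightarrow(a)$, where $(b)\Rightarrow(c)$ follows from the duality/union-bound decomposition of a vacant vertical crossing of $R_{3N\times N}$ into crossings of $N\times N$ sub-squares, exactly as in the paper's sketch of Theorem~\ref{Alex:ext}. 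Your directions $(c)\Rightarrow(b)$, $(c)\Rightarrow(d)$ and the Zhang-type symmetry argument for $(a)\Rightarrow(c)$ are fine in spirit.
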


It follows immediately that there is no percolation at criticality for either the occupied or vacant space.

\begin{cor}[Corollary~3.5 of~\cite{Alex}]\label{Cor:Alex}
At $\lambda = \lambda_c$, there is almost surely no infinite component in the occupied space $D(\eta)$,  and no infinite component in the vacant space $\RR^2 \setminus D(\eta)$.
\end{cor}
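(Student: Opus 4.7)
\bigskip

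\noindent\textbf{Proof proposal for Corollary~\ref{Cor:Alex}.}

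The plan is to combine the equivalence (a)$\Leftrightarrow$(d) in Theorem~\ref{thm:Alex} with a continuity/monotonicity argument in the intensity parameter $\lambda$, and, for the vacant case, with a standard two-dimensional planarity observation.

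For the occupied space, suppose for contradiction that there is an infinite occupied component almost surely at $\lambda=\lambda_c$. By the implication (a)$\Rightarrow$(c) of Theorem~\ref{thm:Alex}, applied at $\lambda_c$, we have $\PB_{\lambda_c}\big(H(\eta,R_{3N\times N},\bullet)\big)\to 1$ as $N\to\infty$, so we may fix $N_0$ with $\PB_{\lambda_c}\big(H(\eta,R_{3N_0\times N_0},\bullet)\big)>1-\theta/2$. The crossing probability $\lambda\mapsto \PB_{\lambda}\big(H(\eta,R_{3N_0\times N_0},\bullet)\big)$ is continuous (and monotone non-decreasing) in $\lambda$: indeed, realize the configurations at all intensities simultaneously on one probability space using the standard monotone coupling (thin a Poisson process of large intensity by independent marks), and note that the crossing event is determined by the finitely many Poisson points intersecting a bounded neighbourhood of $R_{3N_0\times N_0}$, so dominated convergence applies. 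Hence there exists $\lambda'<\lambda_c$ with $\PB_{\lambda'}\big(H(\eta,R_{3N_0\times N_0},\bullet)\big)>1-\theta$, and the implication (d)$\Rightarrow$(a) of Theorem~\ref{thm:Alex} produces an infinite occupied component at $\lambda'$, contradicting the definition of $\lambda_c$.

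For the vacant space, run the same argument using the vacant analogue of Theorem~\ref{thm:Alex} stated in its last sentence. If there were almost surely an infinite vacant component at $\lambda_c$, we would obtain $N_1$ with $\PB_{\lambda_c}\big(H(\eta,R_{3N_1\times N_1},\circ)\big)>1-\theta/2$. The vacant crossing probability is now monotone non-increasing and continuous in $\lambda$ (by the same coupling), so there exists $\lambda''>\lambda_c$ for which the corresponding probability still exceeds $1-\theta$, and hence the vacant space percolates almost surely at $\lambda''$ as well. To derive a contradiction, we invoke the standard two-dimensional planarity fact that the occupied and vacant spaces cannot simultaneously percolate almost surely in $\RR^2$ (a vacant circuit around any bounded region blocks an occupied crossing from inside to infinity, and vice versa); since $\lambda''>\lambda_c$, the occupied space does percolate almost surely at $\lambda''$, so the vacant space cannot, and we are done.

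The only nontrivial ingredient beyond Theorem~\ref{thm:Alex} is the continuity statement, which I expect to be the main (but routine) obstacle: one needs to check that with the standard monotone coupling, the events $\{H(\eta_\lambda,R,\bullet)\}$ (and their vacant analogues) have boundary of measure zero as functions of $\lambda$, which reduces to the fact that with probability one no Poisson point lies on the boundary of the relevant discs and no disc passes exactly through a pivotal point. Once these almost-sure regularity properties are in hand, the dominated convergence step closes the argument cleanly.
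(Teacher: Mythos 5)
Your argument is correct and is essentially the paper's own proof written out in full: the paper's one-line proof of Corollary~\ref{Cor:Alex} is precisely that continuity of $\lambda\mapsto\PB_\lambda\big(H(\eta,R_{3N\times N},\bullet)\big)$ makes condition $(d)$ of Theorem~\ref{thm:Alex} an open condition in $\lambda$, so the set of percolating intensities is open and cannot contain its boundary point $\lambda_c$ --- which is exactly your occupied-space argument. For the vacant half the paper leaves the final step implicit; your closure via the standard planar fact that the occupied and vacant regions cannot both percolate (equivalently, one could invoke the equality of the occupied and vacant critical intensities from~\cite{MS,Roy}) is a legitimate and standard way to finish, though as you note both that fact and the continuity claim are being cited rather than proved.
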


\begin{proof}
 A standard argument shows that $\PB_\lambda\big( H \big( \eta,R_{3N \times N},\bullet \big) \big)$ is a continuous function of $\lambda$, and so the set of $\lambda \in \RR$ for which property $(d)$ of Theorem~\ref{thm:Alex} holds is an open set.
\end{proof}

Theorem~\ref{nontriv} follows immediately from Corollary~\ref{Cor:Alex}, together with the following slight extension of Theorem~\ref{thm:Alex}.

\begin{thm}\label{Alex:ext}
Let $\eta$ be a subset of $\RR^2$ chosen according to a Poisson point process with intensity $\lambda$. Then, for every $t > 0$,
\begin{itemize}
\item[$(a)$] $\ds\sup_{N \ge 1} \, \PB_\lambda\Big( H \big( \eta,R_{N \times tN},\bullet \big) \Big) = 1 \quad \Rightarrow \quad D(\eta)$ percolates almost surely.
\item[$(b)$] $\ds\sup_{N \ge 1} \, \PB_\lambda\Big( H \big( \eta,R_{N \times tN}, \circ \big) \Big) = 1 \quad \Rightarrow \quad \RR^2 \setminus D(\eta)$ percolates almost surely.
\end{itemize}
\end{thm}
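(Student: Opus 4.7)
The plan is to reduce Theorem~\ref{Alex:ext} to Theorem~\ref{thm:Alex} by showing that its hypothesis already suffices to verify condition $(d)$ of the latter. For part $(a)$, assume $\sup_N \PB_\lambda\big(H(\eta, R_{N \times tN}, \bullet)\big) = 1$. My goal will be to produce, for every $\theta' > 0$, some $M \in \N$ with $\PB_\lambda\big(H(\eta, R_{3M \times M}, \bullet)\big) > 1 - \theta'$; choosing $\theta' < \theta$ (for $\theta$ the constant appearing in Theorem~\ref{thm:Alex}) then places us in condition $(d)$ of that theorem, so that $D(\eta)$ percolates almost surely.

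To produce such an $M$, I would first use the hypothesis to select, for each $\delta > 0$, an $N = N(\delta)$ with $\PB_\lambda(H(\eta, R_{N \times tN}, \bullet)) \ge 1 - \delta$. The $90^{\circ}$ rotational invariance of the Poisson point process then supplies the companion estimate $\PB_\lambda(V(\eta, R_{tN \times N}, \bullet)) \ge 1 - \delta$. A standard RSW bootstrap, using the occupied analogue of the Vacant RSW Theorem (due to Alexander) together with the FKG inequality for the Poisson Boolean model, can then convert these high-probability crossings of aspect ratio $1:t$ into high-probability crossings of aspect ratio $3:1$. The mechanism is to take, via FKG, the intersection of a bounded number of translated copies of the given crossing event, and to glue the horizontal crossings of neighbouring copies using vertical crossings of suitably placed auxiliary rectangles whose crossing probabilities are themselves made close to $1$ by the same bootstrap. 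This yields an estimate of the form $\PB_\lambda(H(\eta, R_{3M \times M}, \bullet)) \ge 1 - \eps(\delta)$ for some $M = M(\delta, t)$ and some function $\eps(\delta) \to 0$ as $\delta \to 0$; taking $\delta$ small enough delivers the required $M$.

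Part $(b)$ would be proved by the same argument, with the occupied RSW of Alexander replaced by the Vacant RSW Theorem of Roy stated above and occupied crossings replaced throughout by vacant ones. The equivalence of $(d)$ and $(a)$ in the vacant version of Theorem~\ref{thm:Alex} then gives almost sure percolation of $\RR^2 \setminus D(\eta)$.

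The main obstacle will be the RSW bootstrap itself: one must combine horizontal crossings of $R_{N \times tN}$ with vertical crossings of the rotated rectangle $R_{tN \times N}$, and with further crossings supplied by RSW, carefully enough to assemble a horizontal crossing of $R_{3M \times M}$ whose failure probability stays small, uniformly in $t > 0$. The most delicate situations are those in which $t$ is very large or very small, so that the source aspect ratio $1:t$ and the target $3:1$ are far apart, and where several iterations of the gluing construction -- each slightly degrading the probability estimate -- are needed to bridge them.
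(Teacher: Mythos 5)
Your overall reduction --- verify condition $(d)$ of Theorem~\ref{thm:Alex} and invoke its implication $(d)\Rightarrow(a)$ --- is exactly the paper's strategy, and the rotation step is fine. The genuine gap is in the central ``RSW bootstrap'': you propose to convert crossings of probability $\ge 1-\delta$ at aspect ratio $1:t$ into crossings of probability close to $1$ at aspect ratio $3:1$ by applying the occupied RSW theorem and then gluing with FKG. But the RSW theorems available here (both Roy's vacant version, as stated in the paper, and Alexander's occupied analogue) only convert a lower bound $\delta$ on short-rectangle crossing probabilities into a lower bound $\eps(\delta)$ on long-rectangle crossing probabilities, and $\eps(\delta)$ is in general far smaller than $\delta$ --- in particular it is \emph{not} close to $1$ even when $\delta$ is. Consequently the ``auxiliary rectangles whose crossing probabilities are themselves made close to $1$ by the same bootstrap'' are not supplied by any stated result: FKG gluing of events of probability $\ge 1-\delta$ with events of probability only $\ge \eps(\delta)$ yields a bound of order $\eps(\delta)$, not $1-o(1)$. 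This is precisely the obstruction the paper's argument is designed to circumvent.

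The paper instead passes to the complementary event: $1-\PB_\lambda\big(H(\eta,R_{3N\times N},\bullet)\big) = \PB_\lambda\big(V(\eta,R_{3N\times N},\circ)\big)$, so one must show that a \emph{vacant} vertical crossing of the wide rectangle is unlikely. A partition of $R_{3N\times N}$ into thin vertical strips shows that such a crossing forces either a vacant vertical crossing of an $N\times N$ square (probability $\le\eps$ by hypothesis) or a vacant horizontal crossing of a slightly-sub-square rectangle; a companion inequality controls vacant vertical crossings of slightly-super-square rectangles. If these latter probabilities are not already small, they are bounded below by some $\delta>0$, and then the \emph{Vacant} RSW Theorem forces $\PB_\lambda\big(H(\eta,R_{N\times N},\circ)\big) \ge \eps(\delta)>0$, contradicting the hypothesis applied with $N=N(\eps(\delta))$. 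This dichotomy-plus-contradiction is the missing idea: one needs an RSW input for the \emph{dual} (vacant) process to rule out the bad alternative, not an RSW boost for the primal (occupied) process; correspondingly, part $(b)$ uses the Occupied RSW Theorem, the reverse of your assignment. Your closing worry about uniformity in $t$ is, by contrast, minor: the paper first reduces from $R_{N\times tN}$ to $R_{N\times N}$ and only then handles the fixed target ratio $3:1$.
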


The proof of Theorem~\ref{Alex:ext} is almost identical to that of Theorem~\ref{thm:Alex}; for completeness, we shall sketch the argument.

\begin{proof}[Sketch proof of Theorem~\ref{Alex:ext}]
We shall prove only $(a)$; part $(b)$ follows by the same proof, except using the Occupied RSW Theorem~\cite[Theorem~2.1]{Alex} in place of the Vacant RSW Theorem. We claim that our assumption implies property $(d)$ in Theorem~\ref{thm:Alex}, and hence (by property $(a)$ of the theorem) that $D(\eta)$ percolates. We remark that the implication $(d) \Rightarrow (a)$ in Theorem~\ref{thm:Alex} follows by a straightforward Peierls-type argument.

We want to show that property $(d)$ of Theorem \ref{thm:Alex} holds. It is not hard to show that
$$\ds\sup_{N \ge 1} \, \PB_\lambda\Big( H \big( \eta,R_{N \times tN},\bullet \big) \Big) = 1 \quad \Rightarrow \quad \ds\sup_{N \ge 1} \, \PB_\lambda\Big( H \big( \eta,R_{N \times N},\bullet \big) \Big) = 1,$$
by the Vacant RSW Theorem, applied with $a=b=c=N$. The latter implies that for each $\eps > 0$ there exists $N = N(\eps) \ge 1$ such that
\be\label{eq:upperassumption}
\PB_\lambda\Big( H\big(\eta, R_{N \times N}, \bullet \big) \Big) \; = \; 1 \,-\, \PB_\lambda\Big( V\big( \eta, R_{N \times N}, \circ \big) \Big) \; > \; 1 - \eps.
\ee
Next, observe that for every $N > 0$ and $k \in \N$,
$$\PB_\lambda\Big( V\big( \eta, R_{3N \times N},\circ \big) \Big) \; \le \; \big( 2k + 1 \big) \PB_\lambda \Big( V\big( \eta, R_{N \times N},\circ \big) \Big) \,+\, 2k \cdot \PB_\lambda\Big( H\big( \eta, R_{\frac{k-1}{k}N \times N},\circ \big) \Big),$$
and moreover that
$$\PB_\lambda\Big( V\big( \eta, R_{3N \times N},\circ \big) \Big) \; \le \; 2k \cdot \PB_\lambda\Big( V\big( \eta, R_{\frac{k+1}{k} N \times N}, \circ \big) \Big) \,+\, \big( 2k - 1 \big) \PB_\lambda\Big( H \big( \eta, R_{N \times N}, \circ \big) \Big).$$
To see these, partition the rectangle $R_{3N \times N}$ into $\frac{N}{k} \times N$ rectangles $B_1,\ldots,B_{3k}$, and consider the leftmost and rightmost pieces $B_j$ touched by a vertical path across $R_{3N \times N}$. The first follows because either an $N \times N$ square (made up of $B_j$'s) is crossed vertically, or a $\left( \frac{k-1}{k} \right) N \times N$ rectangle is crossed horizontally. The second follows because either a $\frac{k+1}{k} N \times N$ rectangle is crossed vertically, or an $N \times N$ square is crossed horizontally.

Thus, either $\PB_\lambda\Big( V\big( \eta, R_{3N \times N},\circ \big) \Big)$ can be made arbitrarily small, as required, or there exists $\delta > 0$ such that
\be\label{eq:PHVkn}
\PB_\lambda\Big( H \big( \eta, R_{\frac{k-1}{k}N \times N}, \circ \big) \Big) \,\ge\, \delta \quad \text{and} \quad \PB_\lambda\Big( V\big( \eta, R_{\frac{k+1}{k} N \times N}, \circ \big) \Big) \,\ge\, \delta
\ee
for every $N = N(\eps)$ and every $\eps > 0$.

Now, apply the Vacant RSW Theorem with $a = \frac{k-1}{k} N$, $b = N$, $c = \frac{k+1}{k} N$, for some $N > 0$. Note that $c \le 3a/2$ if $2(k+1) \le 3(k-1)$, which holds if $k \ge 5$. Setting $t = \frac{k}{k-1}$, it follows that if~\eqref{eq:PHVkn} holds for $N$, then
\be\label{eq:PHnxn}
\PB_\lambda\Big( H\big(\eta, R_{N \times N},\circ \big) \Big) \; \ge \; \eps'.
\ee
where $\eps' = \eps(\delta,t,\lambda) > 0$ is given by the Vacant RSW Theorem.

Hence if~\eqref{eq:PHVkn} holds for $N = N(\eps')$ then~\eqref{eq:PHnxn} also holds, and~\eqref{eq:PHnxn} contradicts~\eqref{eq:upperassumption}. Thus~\eqref{eq:PHVkn} must fail to hold for $N = N(\eps')$, and so, by the observations above, $\PB_\lambda\big( V\big( \eta, R_{3N \times N},\circ \big) \big)$ can be made arbitrarily small, as required.
\end{proof}

\section{BKS Theorem for biased product measures} \label{BKSsec}

A tool that has turned out to be very useful in connection with the study of Boolean functions is discrete Fourier analysis. For $\omega\in\{0,1\}^n$ and $i\in [n]$, we define
\[
\chi_i^p(\omega)=\left\{
\begin{array}{cc}
-\sqrt{\frac{1-p}{p}} & \textrm{if } \omega_i=1 \\
\sqrt{\frac{p}{1-p}} & \textrm{otherwise. }
\end{array}
\right.
\]
Furthermore, for $S\subset [n],$ let $\chi_S^p(\omega):=\prod_{ i \in S } \chi_i^p(\omega)$. (In particular, $\chi_\emptyset^p$ is the constant
function 1.) We observe that for $i \neq j$
$$\E_p\big[\chi_i^p(\omega)\chi_j^p(\omega)\big] \; = \; \left( \frac{1-p}{p} \right) p^2 \,+\, \left( \frac{p}{1-p} \right) (1-p)^2 \, - \, 2p(1-p) \; = \; 0.$$
In fact, it is easily seen that the set $\{\chi_S^p\}_{S\subset [n]}$ forms an orthonormal basis for the set of functions $f:\{0,1\}^n\mapsto \RR$. We can therefore express such functions using the so-called \emph{Fourier-Walsh representation} (see~\cite{Paley,Walsh}):
\begin{equation}\label{FWeq}
f(\omega)=\sum_{S\subset [n]} \hat{f}^p(S)\chi_S^p(\omega),
\end{equation}
where $\hat{f}^p(S) := \E_p[ f \chi_S^p]$. 

The following lemma was proved in~\cite{BKS} in the uniform case; its generalization to arbitrary (fixed) $p$ is similarly straightforward.

\begin{lemma} \label{lem:NSequiv}
Let $p \in (0,1)$, and let $(f_n)_{n\ge1}$ be a sequence of functions $f_n \colon \{0,1\}^n\mapsto [0,1]$. The following two conditions are equivalent.
\begin{itemize}
\item[$(i)$] The sequence $(f_n)_{n\ge1}$ is \textup{NS}$_p$.\\[-2ex]
\item[$(ii)$] For every $k \in \N$,
\[
\lim_{n\to\infty} \sum_{0 < |S| \le k}\hat{f_n}^p(S)^2 = 0.
\]
\end{itemize}
\end{lemma}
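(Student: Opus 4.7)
The plan is to express the left-hand side of \eqref{eqn:NS_p} in closed form via the Fourier--Walsh decomposition \eqref{FWeq}, after which both implications reduce to elementary estimates on a single series.

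First I would compute how the noise operator acts on the basis $\{\chi_S^p\}_{S\subset[n]}$. Conditional on $\omega$, each coordinate of $\omega^\eps$ agrees with $\omega$ with probability $1-\eps$ and is re-sampled from $\Pr_p$ with probability $\eps$; since $\E_p[\chi_i^p]=0$ this gives $\E_p[\chi_i^p(\omega^\eps)\mid\omega]=(1-\eps)\chi_i^p(\omega)$, and by independence across coordinates $\E_p[\chi_S^p(\omega^\eps)\mid\omega]=(1-\eps)^{|S|}\chi_S^p(\omega)$. Plugging the expansion \eqref{FWeq} of $f_n$ into $\E_p[f_n(\omega)f_n(\omega^\eps)]$, exchanging sums, and using orthonormality (together with $\E_p[f_n]^2=\hat{f_n}^p(\emptyset)^2$) then yields the identity
\bea
\E_p\big[f_n(\omega)f_n(\omega^\eps)\big] - \E_p\big[f_n(\omega)\big]^2 \;=\; \sum_{S \ne \emptyset} (1-\eps)^{|S|}\, \hat{f_n}^p(S)^2.
\eea

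Given this identity, the two implications follow from one-line bounds. For $(i)\Rightarrow(ii)$, fix $k$ and any $\eps \in (0,1)$; since $(1-\eps)^{|S|}\ge(1-\eps)^k$ whenever $0<|S|\le k$, the truncated sum $\sum_{0<|S|\le k}\hat{f_n}^p(S)^2$ is bounded above by $(1-\eps)^{-k}$ times the full series, which tends to $0$ by NS$_p$. For $(ii)\Rightarrow(i)$, fix $\eps,\delta>0$; since $f_n$ takes values in $[0,1]$, Parseval gives $\sum_{S\ne\emptyset}\hat{f_n}^p(S)^2\le \E_p[f_n^2]\le 1$, so choosing $k$ with $(1-\eps)^k<\delta/2$ and splitting the series at $|S|=k$ bounds the low-frequency part by assumption $(ii)$ (which makes it $<\delta/2$ for all sufficiently large $n$) and the high-frequency part by $(1-\eps)^k\le\delta/2$, so the full series is $<\delta$ eventually. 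By Definition~\ref{def:NSp} this proves NS$_p$.

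No serious obstacle is expected: the entire content of the lemma lies in the closed-form identity, which in turn relies only on the (already noted) fact that $\{\chi_S^p\}$ is an orthonormal basis for $L^2(\Pr_p)$. This is precisely the biased analogue of the standard Fourier calculation from~\cite{BKS}, and the argument goes through with no change beyond inserting the correct definition of $\chi_i^p$.
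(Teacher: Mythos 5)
Your proof is correct and follows essentially the same route as the paper: both derive the identity $\E_p\big[f_n(\omega)f_n(\omega^\eps)\big]-\E_p\big[f_n(\omega)\big]^2=\sum_{S\ne\emptyset}(1-\eps)^{|S|}\hat{f_n}^p(S)^2$ from the Fourier--Walsh expansion and then obtain both implications by truncating the series at $|S|=k$. The only differences are cosmetic: you phrase the key computation via the noise operator acting on $\chi_S^p$ rather than computing $\E_p\big[\chi_S^p(\omega)\chi_{S'}^p(\omega^\eps)\big]$ directly, and you spell out the elementary truncation estimates that the paper dismisses with ``both implications follow easily.''
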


Although our results hold for arbitrary $p \in (0,1)$, we shall prove them only for $p \le 1/2$, since this is the case we shall need in our applications. The proofs for $p > 1/2$ all follow in exactly the same way. From now on, we will not stress that $S\subset [n]$ in the notation. Furthermore, when $p = 1/2$ we shall write $\chi_S$ for $\chi_S^p$ and $\hat{f}(S)$ for $\hat{f}^p(S)$.

\begin{proof}[Proof of Lemma~\ref{lem:NSequiv}]
Note that $\Ex_p\big[ \chi_S^p(\omega) \chi_{S'}^p(\omega^\eps) \big] = 0$ if $S \neq S'$, that $\E_p\big[f_n(\omega)\big] = \hat{f_n}^p(\emptyset)$, and that $$\E_p\big[\chi_S^p(\omega)\chi_S^p(\omega^\eps)\big] = (1-\eps)^{|S|},$$
since this is zero whenever at least one of the coordinates $\{ \omega_i : i \in S \}$ is re-randomized, and one otherwise. By~\eqref{FWeq}, it follows that
\begin{eqnarray*}
&& \E_p\big[f_n(\omega) f_n(\omega^{\eps}) \big] - \E_p\big[f_n(\omega)\big]^2 \; =\; \E_p\left[\sum_S\hat{f_n}^p(S)\chi_S^p(\omega) \sum_{S'} \hat{f_n}^p(S')\chi_{S'}^p(\omega^{\eps})\right]-\hat{f}_n^p(\emptyset)^2\\
&&\qquad \qquad= \; \sum_{S \neq \emptyset } \hat{f_n}^p(S)^2 \E_p\big[\chi_S^p(\omega)\chi_S^p(\omega^\eps)\big] \; = \; \sum_{S\neq \emptyset}\hat{f_n}^p(S)^2(1-\eps)^{|S|},
\end{eqnarray*}
from which both implications follow easily.
\end{proof}

Recall from~\eqref{def:h} that we define $h_f(X) := \E[ f(Z) | X ]$, where $X,Y \in \{0,1\}^n$ are independent random variables, $X$ is chosen uniformly, $Y$ is a $2p$-random set, and $Z_i = X_i Y_i$ for every $i\in [n]$. The key fact, that the sequence $(f_n)_{n\ge1}$ is NS$_p$ if and only if $(h_{f_n})_{n\ge1}$ is NS, will follow directly from Lemma \ref{lem:NSequiv}, together with the following result.

\begin{prop}\label{prop:hfspectrum}
Let $f \colon \{0,1\}^n \to [0,1]$ and $p \in (0,1)$, and set $\bar{p}=\min\{p,1-p\}$. Then, for every $S \subset [n]$,
\[
\hat{h}_f(S) \,=\, \left(\frac{\bar{p}}{1-\bar{p}}\right)^{|S|/2}\hat{f}^p(S)
\]
\end{prop}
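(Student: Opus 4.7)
The plan is to evaluate $\hat{h}_f(S) = \E\big[h_f(X)\chi_S(X)\big]$ by unfolding the definition $h_f(X) = \E[f(Z)\mid X]$ and applying the tower property, so that
$$\hat{h}_f(S) \;=\; \E\big[f(Z)\chi_S(X)\big] \;=\; \E_Z\Big[f(Z)\,\E[\chi_S(X)\mid Z]\Big].$$
The whole proposition then reduces to computing the inner conditional expectation and observing that it is a constant multiple of $\chi_S^p(Z)$.

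I will restrict attention to $p \le 1/2$ (the case $p > 1/2$ is symmetric, using $Z_i = 1-(1-X_i)Y_i$). Since the pairs $(X_i,Y_i)$ are independent across $i$, conditioning on $Z$ preserves independence of the $X_i$'s, so
$$\E[\chi_S(X)\mid Z] \;=\; \prod_{i\in S} \E[\chi_i(X_i)\mid Z_i].$$
Now I would evaluate the two cases by Bayes' rule. When $Z_i = 1$ we must have $X_i = 1$, hence $\E[\chi_i(X_i)\mid Z_i=1] = -1$. When $Z_i = 0$, a short computation gives
$$\P(X_i=1\mid Z_i=0) \;=\; \frac{(1/2)(1-2p)}{1-p} \;=\; \frac{1-2p}{2(1-p)},$$
so $\E[\chi_i(X_i)\mid Z_i = 0] = -\tfrac{1-2p}{2(1-p)} + \tfrac{1}{2(1-p)} = \tfrac{p}{1-p}$.

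The key check is that both values are exactly $\sqrt{p/(1-p)}\cdot\chi_i^p(Z_i)$: indeed $\chi_i^p(1) = -\sqrt{(1-p)/p}$ gives $\sqrt{p/(1-p)}\cdot\chi_i^p(1) = -1$, while $\chi_i^p(0) = \sqrt{p/(1-p)}$ gives $\sqrt{p/(1-p)}\cdot\chi_i^p(0) = p/(1-p)$. Multiplying over $i \in S$ yields $\E[\chi_S(X)\mid Z] = \big(p/(1-p)\big)^{|S|/2}\chi_S^p(Z)$, and plugging back in,
$$\hat{h}_f(S) \;=\; \left(\frac{p}{1-p}\right)^{|S|/2}\E_Z\big[f(Z)\chi_S^p(Z)\big] \;=\; \left(\frac{\bar{p}}{1-\bar{p}}\right)^{|S|/2}\hat{f}^p(S),$$
as claimed (recall that $Z$ is a $p$-subset when $X$ is uniform and $Y$ is a $2p$-subset).

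There is no real obstacle here; the only subtlety is getting the Bayes computation right and verifying that the numerical value on $\{Z_i=0\}$ and on $\{Z_i=1\}$ collapses to the same scalar multiple of $\chi_i^p(Z_i)$, which is what makes the product factorise cleanly. Once that is in place, the argument is a three-line application of the tower property.
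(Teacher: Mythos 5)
Your proposal is correct and follows essentially the same route as the paper: unfold $\hat h_f(S)$ via the tower property, compute $\E[\chi_i(X)\mid Z_i]$ by Bayes' rule in the two cases, and observe that both values equal $\sqrt{p/(1-p)}\,\chi_i^p(Z_i)$ so the product factorises. All the numerical checks are right, and the reduction to $p\le 1/2$ matches the paper's convention.
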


\begin{proof}
We shall prove the proposition in the case $p \le 1/2$; the other case follows similarly. Let $f \colon \{0,1\}^n \to [0,1]$ and $S \subset [n]$. By the definitions, we have
\begin{equation} \label{eqn9}
\begin{aligned}
\hat{h}_f(S) &\; =\; \E\big[h_f(X)\chi_S(X) \big] \; = \; \E\Big[ \E\big[f(Z) \,\big|\, X\big] \chi_S(X) \Big]\\[+0.5ex]
& \;=\; \E\Big[ \E\big[f(Z)\chi_S(X) \,\big|\, X\big] \Big] \; = \; \E\big[ f(Z)\chi_S(X) \big] \; = \; \E\Big[ f(Z)\E\big[\chi_S(X) \,\big|\, Z\big] \Big].
\end{aligned}
\end{equation}
Furthermore, $Z_i=1$ implies $X_i=1$, which implies $\chi_i(X) = -1$, so
\[
\E\big[ \chi_i(X) \,\big|\, Z_i = 1 \big] \,=\, -1,
\]
while $Z_i = 0$ and $X_i = 1$ implies that $Y_i = 0$, so
\begin{eqnarray*}
\E\big[\chi_i(X) \big| Z_i = 0\big] & = & 1 \,-\, 2 \cdot \P\big( X_i=1 \,\big|\, Z_i=0 \big)\\
& = & 1 \,-\, 2 \cdot \frac{\P(Z_i=0 \,|\, X_i=1)\P( X_i=1)}{\P(Z_i=0)} \; = \; 1 - \frac{1-2p}{1-p} \; = \; \frac{p}{1-p}.
\end{eqnarray*}
We conclude that $\E\big[\chi_i(X) \big| Z_i\big] = \sqrt{\frac{p}{1-p}}\chi_i^p(Z)$. Therefore, since the $X_i$ and $Y_i$ are all independent,
\begin{eqnarray*}
\E\big[ \chi_S(X) \,\big|\, Z \big] & = & \prod_{i\in S} \E\big[ \chi_i(X) \,\big|\, Z_i \big] \; = \; \prod_{i\in S} \sqrt{\frac{p}{1-p}}\chi_i^p(Z) \; = \; \left(\frac{p}{1-p}\right)^{|S|/2}\chi_S^p(Z).
\end{eqnarray*}
Inserting this into~\eqref{eqn9} gives the result.
\end{proof}

It is now straightforward to deduce Proposition~\ref{prop:hf} from Proposition~\ref{prop:hfspectrum}.

\begin{proof}[Proof of Proposition \ref{prop:hf}]
We shall assume that $p \le 1/2$; once again, the other case follows similarly. Let $f \colon \{0,1\}^n \to [0,1]$.

$(i)$ Suppose that $f$ is monotone; we claim that $h_f$ is also monotone. Indeed, observe that
\begin{equation}\label{25i}
h_f(X) \, = \, \E\big[f(Z) \big|X\big] \, = \, \E\big[f(XY) \big| X\big] \, = \, \sum_{\xi \in \{0,1\}^n}f(X\xi) \P(Y=\xi).
\end{equation}
But if $f$ is monotone, then $f(X\xi)$ is also monotone in $X$ for every $\xi \in \{0,1\}^n$. Thus~\eqref{25i} implies that $h_f$ is monotone, as required.

$(ii)$ We next claim that $\Inf_{1/2,i} (h_f) \le 2p \cdot  \Inf_{p,i}(f)$ for every $i \in [n]$. For every $i \in [n]$ and $k \in \{0,1\}$, let $X^{i \to k} \in \{0,1\}^n$ be defined by $X_j^{i \to k} = X_j$ if $j \neq i$, and $X_i^{i \to k} = k$.

By the definition, we have
\begin{equation}\label{25ii}
\Inf_{1/2,i}(h_f) \; = \; \E\big[\big|h_f(X) - h_f(\sigma_iX)\big|\big] \; = \; \E\Big[\Big|\E\big[f(Z) \big| X^{i \to 1}\big]-\E\big[f(Z) \big| X^{i \to 0}\big]\Big|\Big].
\end{equation}
Now, if $X_i = 1$, then $Y_i = 1$ if and only if $Z_i = 1$, and if $X_i = 0$ then $Z_i = 0$, so the right-hand side of~\eqref{25ii} is equal to
$$\E\bigg[ \Big| 2p \cdot  \E\big[f(Z) \,\big|\, X^{i \to 1}, Z_i = 1 \big]\, +\, \big( 1 - 2p \big) \E\big[f(Z) \,\big|\, X^{i \to 1}, Z_i = 0 \big] \,-\, \E\big[ f(Z) \,\big|\, X^{i \to 0}, Z_i = 0 \big] \Big| \bigg].$$
But given $Z_i$, the value of $X_i$ is irrelevant to $f(Z)$, so we have (with obvious notation $X_{\{i\}^c}$)
\begin{eqnarray}
\Inf_{1/2,i}(h_f) & = & 2p\, \E\Big[ \left| \E\big[f(Z^{i \to 1}) \,-\, f(Z^{i \to 0}) \,\big|\, X_{\{i\}^c} \big] \right| \Big] \nonumber \\[+0.5ex]
& \le & 2p\, \E\Big [\big| f(Z^{i \to 1}) \,-\, f(Z^{i \to 0}) \big| \Big]\; = \; 2p\,\Inf_{p,i}(f),\label{25ii2}
\end{eqnarray}
as required. Finally, note that the inequality in~\eqref{25ii2} be replaced by an equality when~$f$~is monotone.

$(iii)$ We are required to show that $(f_n)_{n\ge 1}$ is NS$_p$ if and only if $(h_{f_n})_{n\ge 1}$ is NS. Indeed, by Lemma~\ref{lem:NSequiv}, $(f_n)_{n\ge 1}$ is NS$_p$ if and only if $\sum_{0 < |S| \le k}\hat{f_n}^p(S)^2 \to 0$ as $n \to \infty$ for every fixed $k$, and by Proposition \ref{prop:hfspectrum},
$$\lim_{n \to \infty}\sum_{0<|S|\le k}\hat{h}_{f_n}(S)^2=0\quad \Leftrightarrow \quad\lim_{n \to \infty} \sum_{0 < |S| \le  k} \hat{f_n}^p(S)^2 = 0$$
for every such $k$. But by Lemma~\ref{lem:NSequiv} (applied with $p = 1/2$), we have that $(h_{f_n})_{n\ge 1}$ is NS if and only if $\sum_{0 < |S| \le k} \hat{h}_{f_n}(S)^2 \to 0$ as $n \to \infty$ for every fixed $k$, so the result follows.

Finally, note that $\Var(h_{f_n}) = \sum_{S\neq\emptyset}\hat{h}_{f_n}(S)^2$. Thus, by Proposition~\ref{prop:hfspectrum}, if $p \ne 1/2$ then
$$\Var(h_{f_n}) \; = \; \sum_{S\neq\emptyset} \left(\frac{p}{1-p}\right)^{|S|} \hat{f_n}^p(S)^2 \; \to \; 0$$
as $n \to \infty$ if and only if $\sum_{0 < |S| \le k} \hat{f_n}^p(S)^2 \to 0$ as $n \to \infty$ for every fixed $k$, as claimed.
\end{proof}

The BKS Theorem for biased product measures follows almost immediately from the uniform case, together with Proposition~\ref{prop:hf}.

\begin{proof}[Proof of Theorem~\ref{BKSp}]
Let $(f_n)_{n \ge 1}$ be a sequence of functions $f_n \colon \{0,1\}^n\to[0,1]$, let $p\in(0,1)$, and assume that $I\!I_p(f_n) \to 0$ as $n \to \infty$. We are required to show that $(f_n)_{n\ge1}$ is NS$_p$.

By Proposition~\ref{prop:hf}$(ii)$, we have
$$I\!I(h_{f_n}) \; \le \; 4\bar{p}^2 \cdot I\!I(f_n),$$
and so $I\!I(h_{f_n}) \to 0$ as $n \to \infty$. By the BKS Theorem (i.e., Theorem~\ref{BKSp} in the case $p = 1/2$), which was proved in~\cite{BKS}, it follows that $(h_{f_n})_{n \ge 1}$ is NS.

But, by Proposition~\ref{prop:hf}$(iii)$, we have $(h_{f_n})_{n\ge1}$ is NS if and only if $(f_n)_{n\ge1}$ is NS$_p$. Hence $(f_n)_{n\ge1}$ is NS$_p$, as required.
\end{proof}

\section{The deterministic algorithm approach}\label{Algsec}

In this section we shall prove Theorem \ref{algthm}, the uniform case of which was proved in~\cite{BKS}. We shall use the method of~\cite{BKS}, together with Theorem~\ref{BKSp} and some of the results from the previous section. 

We need the following definition.
\begin{defn}[The Majority function]\label{def:MajK2}
For every $K\subset[n]$ let $M_K:\{0,1\}^n \to \{-1,0,1\}$ be defined by
\bea
M_K(X):=\left\{
\begin{aligned}
1 & \quad\text{if }\sum_{i\in K}(2X_i-1) > 0\\
0 & \quad\text{if }\sum_{i\in K}(2X_i-1) = 0\\
-1 & \quad\text{if }\sum_{i\in K}(2X_i-1) < 0.
\end{aligned}
\right.
\eea
\end{defn}

Throughout this section let $X$ and $Z$ be the random variables defined in Section~\ref{hfsec}, so $X \in \{0,1\}^n$ is chosen uniformly, and $Z \in \{0,1\}^n$ with $Z_i = X_i Y_i$, where $Y_i$ is chosen according to product measure with density $2p$. (We assume again for simplicity that $p \le 1/2$.)

Theorem~\ref{algthm} will follow by combining the BKS Theorem with the following two propositions, of which in~\cite{BKS} the first was proved in the case $p=1/2$, and the second was proved for $p=1/2$ for functions taking values in $\{0,1\}$. We shall generalize them to the biased setting.

\begin{prop}\label{prop:InfMKcorr}
There exists a constant $C > 0$ such that, if $f \colon \{0,1\}^n \to [0,1]$ is monotone, $p \in (0,1)$ and $K \subset [n]$, then
$$\sum_{j \in K} \Inf_{p,j}(f) \; \le \; \frac{C}{\min\{p,1-p\}} \sqrt{|K|} \E\big[ f(Z)M_K(X) \big] \left( 1 + \sqrt{- \log\E\big[ f(Z) M_K(X) \big]} \right).$$
\end{prop}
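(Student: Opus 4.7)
The plan is to reduce the biased case to the uniform case using the auxiliary function $h_f$ from Section~\ref{hfsec}. For concreteness assume $p \le 1/2$, so $\bar p = p$; the case $p > 1/2$ proceeds identically. First I would invoke Proposition~\ref{prop:hf} applied to the monotone function $f$: part $(i)$ gives that $h_f : \{0,1\}^n \to [0,1]$ is monotone, and the equality case of part $(ii)$ yields the exact identity
$$\Inf_{1/2,j}(h_f) \;=\; 2p\,\Inf_{p,j}(f) \qquad \text{for every } j \in [n].$$

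Next, I would apply the $p = 1/2$ case of the proposition, established in~\cite{BKS}, to the monotone function $h_f$, obtaining an absolute constant $C' > 0$ such that
$$\sum_{j \in K} \Inf_{1/2,j}(h_f) \;\le\; C'\sqrt{|K|}\,\E\big[h_f(X) M_K(X)\big]\bigg(1 + \sqrt{-\log \E\big[h_f(X) M_K(X)\big]}\bigg).$$
To match the right-hand side with the statement, the third step is to note that $M_K(X)$ is a function of $X$ alone, so the tower property gives
$$\E\big[h_f(X) M_K(X)\big] \;=\; \E\big[\E[f(Z)\mid X]\,M_K(X)\big] \;=\; \E\big[f(Z) M_K(X)\big].$$
Combining this identity with the influence relation on the left and dividing both sides by $2p$ will produce the claimed bound with $C = C'/2$.

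The main obstacle is that~\cite{BKS} stated the uniform version of this proposition for $\{0,1\}$-valued monotone functions, whereas $h_f$ takes values in the full interval $[0,1]$. I expect this extension to be essentially routine: either the original argument carries through verbatim, since it rests on Fourier--Walsh identities, hypercontractivity, and monotonicity, all of which remain valid for $[0,1]$-valued functions (as already noted for the BKS Theorem itself in the discussion preceding Theorem~\ref{BKSp}); or else one can obtain it by randomized rounding, writing $h_f(X) = \Pr(U \le h_f(X))$ with $U$ an independent uniform variable on $[0,1]$ and applying the Boolean case to $\mathbf{1}[U \le h_f(X)]$ on the enlarged space. Some care will be needed to preserve both the $\sqrt{|K|}$ factor and the logarithmic correction on the right-hand side, but no substantially new idea seems necessary.
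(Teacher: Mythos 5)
Your proposal is correct and follows exactly the paper's own argument: apply the uniform ($p=1/2$) case from \cite[Corollary~3.2]{BKS} to the monotone function $h_f$, use the tower property to rewrite $\E[h_f(X)M_K(X)]$ as $\E[f(Z)M_K(X)]$, and then use the equality case of Proposition~\ref{prop:hf}$(ii)$ to convert $\Inf_{1/2,j}(h_f)$ into $2\bar p\,\Inf_{p,j}(f)$. Your cautionary remark about extending the uniform case from $\{0,1\}$-valued to $[0,1]$-valued functions is reasonable but not an issue the paper itself flags for this particular proposition (it flags it only for Proposition~\ref{prop:algMKcorr}).
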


Recall that the revealment $\delta_K(\A,p)$ of an algorithm with respect to a set $K \subset [n]$ is defined to be $\max\{ \delta_j(\A,p) : j \in K \}$, where $\delta_j(\A,p) = \Pr_p\big( \A \textup{ queries coordinate $j$}\big)$.

\begin{prop}\label{prop:algMKcorr}
There exists a constant $C > 0$ such that, if $f \colon \{0,1\}^n \to [0,1]$, $p \in (0,1)$, $K \subset [n]$
and $\A\in \A^*(f),$ then
$$\E\big[ f(Z) M_K(X) \big] \,\le\, C\, \delta_K(\A,p)^{1/3} \log n.$$
\end{prop}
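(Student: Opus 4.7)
The plan is to reduce to the uniform ($p = 1/2$) case proved in~\cite{BKS}, by passing through the smoothing operator $h_f$ of Section~\ref{hfsec}. The first step is a conditioning identity: since $h_f(X) = \E[f(Z) \mid X]$ by definition, the tower property gives
\[
\E\big[f(Z)\, M_K(X)\big] \;=\; \E\big[h_f(X)\, M_K(X)\big],
\]
which transfers the problem to a statement about the uniform measure on $\{0,1\}^n$.

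The key technical step is then to convert the revealment of $\A$ into a Fourier-tail bound on $h_f$. Proposition~\ref{prop:hfspectrum} gives $\hat h_f(S) = (p/(1-p))^{|S|/2}\, \hat f^p(S)$, and so (since $p \le 1/2$) we have $\hat h_f(S)^2 \le \hat f^p(S)^2$. Moreover, I claim the $p$-biased analogue of the BKS revealment inequality:
\[
\sum_{S \ni i} \hat f^p(S)^2 \;\le\; \delta_i(\A,p) \qquad \text{for every } i \in [n].
\]
This follows from the standard argument with $\chi_S^p$ in place of $\chi_S$. Decomposing $f$ according to the random set $J(\omega)$ of coordinates queried by $\A$ on input $\omega$, one notes that if $i \notin J(\omega)$ then $J(\omega \oplus e_i) = J(\omega)$ and $f(\omega \oplus e_i) = f(\omega)$; conditioning on $\omega_{[n] \setminus \{i\}}$ and using $\E_p[\chi_i^p(\omega_i)] = 0$ gives $\hat f^p(S) = \E_p[\mathbf{1}_{i \in J}\, f\, \chi_S^p]$ for every $S \ni i$, from which Parseval in the $\chi_S^p$-basis gives the claim. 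Combining these two estimates yields $\sum_{S \ni i} \hat h_f(S)^2 \le \delta_i(\A, p)$ for every $i$.

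With this revealment-type Fourier bound on $h_f$ in hand, the rest of the argument applies the BKS method to the uniform-measure function $h_f \colon \{0,1\}^n \to [0,1]$. One expands $\E[h_f\, M_K] = \sum_{\emptyset \ne S \subset K} \hat h_f(S)\, \hat M_K(S)$, splits the sum at a level threshold $t$, bounds the low-degree part by Cauchy--Schwarz combined with the Fourier-tail bound on $h_f$, controls the high-degree part using the Fourier tail of the Majority function $M_K$, and optimizes over $t$ to reach the exponent $1/3$ (with the $\log n$ factor entering through the majority tail estimates). The main obstacle is the biased revealment-Fourier inequality of the previous paragraph; once that is in place the remainder is a direct adaptation of BKS's uniform argument to $h_f$.
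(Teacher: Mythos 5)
Your first two steps are fine: the identity $\E[f(Z)M_K(X)]=\E[h_f(X)M_K(X)]$ is exactly the one the paper uses (in the proof of Proposition~\ref{prop:InfMKcorr}), and your biased revealment--Fourier inequality $\sum_{S\ni i}\hat f^p(S)^2\le\delta_i(\A,p)$ is correct as argued: $\1_{\{i\in J\}}$ and $\1_{\{i\notin J\}}f$ depend only on $\omega_{[n]\setminus\{i\}}$, so $\hat f^p(S)=\E_p[\1_{\{i\in J\}}f\chi_S^p]$ for $S\ni i$, and Bessel's inequality gives the claim; combined with Proposition~\ref{prop:hfspectrum} this yields $\sum_{S\ni i}\hat h_f(S)^2\le\delta_i(\A,p)$ for $p\le 1/2$. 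The gap is in the final step. The only $L^2$ information this gives about the Fourier weights of $h_f$ on subsets of $K$ is $\sum_{\emptyset\neq S\subset K,\,|S|\le t}\hat h_f(S)^2\le |K|\,\delta_K(\A,p)$ (summing over $i\in K$), together with the trivial bound $\sum_S\hat h_f(S)^2\le 1$. Pairing these against the level weights of Majority, $\sum_{|S|=k}\hat M_K(S)^2=\Theta(k^{-3/2})$, via Cauchy--Schwarz (level by level or after truncation at any $t$) yields at best a bound of order $\sqrt{\min\{|K|\delta_K(\A,p),1\}}$, or $(|K|\delta_K(\A,p))^{1/4}$ with more care. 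No choice of $t$ removes the factor of $|K|$, and a bound that grows with $|K|$ is useless here: in the application $|K|$ is of order $n$, and Theorem~\ref{algthm} genuinely needs the $|K|$-free bound $C\,\delta_K(\A,p)^{1/3}\log n$ (otherwise the optimization via Lemma~\ref{lma:opt} fails to make $I\!I_p(f_n)$ tend to zero). So the exponent $1/3$ and the $\log n$ cannot be reached by this route.

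The reason is that the statement relies on an anti-concentration phenomenon that is invisible to these $L^2$ estimates. The paper's proof couples the run of the algorithm with $X$ and $Z$ so that the bits of $X$ on the unqueried set $K\setminus V$ are independent of $f(Z)$ and conditionally symmetric. It then shows three things: by Markov, $|V\cap K|\le |K|\delta_K(\A,p)^{2/3}$ except with probability $\delta_K(\A,p)^{1/3}$; by Chernoff, the drift $|S_{|V\cap K|}|$ contributed by the queried bits is at most $\sqrt{|K|}\,\delta_K(\A,p)^{1/3}\log n$ with high probability; and, crucially, by the local-limit bound of Observation~\ref{binmax}, the sum $T_{|K\setminus V|}$ over the $\ge 3|K|/4$ unqueried bits \emph{exceeds} that threshold except with probability $O(\delta_K(\A,p)^{1/3}\log n)$. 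On the complement of these bad events the sign of $M_K(X)$ is determined by the conditionally symmetric unqueried bits, so $\E[M_K(X)\1_Q\mid\F]=0$ and the correlation is bounded by the probability of the bad events. That last anti-concentration input (a pointwise bound $\P(\xi_{m,1/2}=a)\le C/\sqrt m$ on the unqueried majority) is the missing idea in your sketch; to complete your approach you would have to replace the Fourier pairing with this coupling argument, which works directly for $(f(Z),M_K(X))$ without passing through $h_f$ at all.
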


We begin by proving Proposition~\ref{prop:InfMKcorr}, which follows almost immediately from the uniform case, together with Proposition~\ref{prop:hf}.

\begin{proof}[Proof of Proposition \ref{prop:InfMKcorr}]
The proposition was proved in~\cite[Corollary~3.2]{BKS} in the case $p = 1/2$; we apply this result to the function $h_f$. It follows that
$$\sum_{j \in K} \Inf_{1/2,j}(h_f) \,\le\, C\sqrt{|K|}\E\big[h_f(X)M_K(X)\big]\left(1+\sqrt{-\log\E\big[h_f(X)M_K(X)\big]}\right).$$
for some $C > 0$. Next, observe that
$$\E\big[h_f(X)M_K(X)\big] \,=\, \E\Big[ \E\big[ f(Z) M_K(X) \,\big|\, X \big] \Big] \,=\, \E\big[ f(Z) M_K(X) \big].$$
Since $f$ is monotone, we have $\Inf_{1/2,j}(h_f) = 2\bar{p} \cdot \Inf_{p,j}(f)$, by Proposition~\ref{prop:hf}, and so the result follows.
\end{proof}

The proof of Proposition \ref{prop:algMKcorr} will be based on the argument used in~\cite[Section~4]{BKS}, but modified to fit in the current setting. The strategy is roughly as follows: let $V$ denote the set of coordinates queried by the algorithm. Then with high probability, $V \cap K$ is small enough so that $M_K(X)$ will (probably) be determined by the values of bits of $X$ in $K \setminus V$. By a careful coupling, we can make these independent of the value of $f$, and thus $\E\big[f(Z)M_K(X)\big]$ is small.

We shall use Chernoff's inequality; see, e.g., \cite[Appendix A]{AS}. Let $\Bin(n,p)$ denote the binomial distribution with parameters $n$ and $p$. Throughout the rest of the paper, $\xi_{n,p}$ will denote a binomially distributed random variable with parameters $n\in\N$ and $p\in(0,1)$.

\begin{Chernoff}
  \label{chernoff}
Let $n \in \N$ and $p \in (0,1)$, and let $a > 0$. Then
\begin{equation} \label{eqn1}
\Pr\Big( \big| \xi_{n,p} - pn \big| > a \Big) \; < \; 2\exp\left( -\frac{a^2}{4pn} \right)
\end{equation}
if $a \le pn/2$, and $\Pr\big( | \xi_{n,p} - pn | > a \big) < 2\exp\big( -pn/16 \big)$ otherwise.
If $p=1/2,$ then (\ref{eqn1}) holds for every $a \geq 0.$
\end{Chernoff}

We shall also use the following simple property of the binomial distribution, which follows by Stirling's formula.

\begin{obs}\label{binmax}
There exists $C > 0$ such that for any $n \in \N$, $p \in (0,1)$ and $a \in \N$, 
$$\P\big( \xi_{n,p} = a \big) \, \le \, \frac{C}{\sqrt{np(1-p)}}.$$
\end{obs}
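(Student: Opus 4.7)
The plan is to reduce the problem to evaluating the probability at the mode of the binomial distribution, and then to apply Stirling's formula directly.

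First, I would observe that the map $a \mapsto \P(\xi_{n,p} = a)$ is unimodal, with mode $m := \lfloor (n+1)p \rfloor$. In particular, $|m - np| \le 1$, and $\P(\xi_{n,p} = a) \le \P(\xi_{n,p} = m)$ for every $a \in \N$. So it suffices to bound the probability at $a = m$. Moreover, the inequality is trivially satisfied whenever $np(1-p) \le 1$ (take $C \ge 1$, since any probability is at most $1$), so I would henceforth assume $np(1-p) > 1$; this in particular guarantees $1 \le m \le n-1$, so that all the factorials appearing below are of positive integers.

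Next I would apply Stirling's formula, in the form $k! = \sqrt{2\pi k}(k/e)^k (1 + O(1/k))$, to each factorial in the expression
$$\P(\xi_{n,p} = m) \;=\; \frac{n!}{m!(n-m)!}\, p^m (1-p)^{n-m}.$$
After simplification, this produces an upper bound of the shape
$$\P(\xi_{n,p} = m) \;\le\; \frac{C_1}{\sqrt{2\pi}}\, \sqrt{\frac{n}{m(n-m)}}\, \left(\frac{np}{m}\right)^{\!m} \left(\frac{n(1-p)}{n-m}\right)^{\!n-m}$$
for some absolute constant $C_1$. Under the assumption $np(1-p) > 1$ together with $|m - np| \le 1$, we have $m(n-m) = n^2 p(1-p) (1 + o(1))$, so the prefactor $\sqrt{n/(m(n-m))}$ is bounded above by an absolute constant times $1/\sqrt{np(1-p)}$.

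It remains to show that the product $(np/m)^m (n(1-p)/(n-m))^{n-m}$ is bounded by an absolute constant. Writing $\delta := m - np$ with $|\delta| \le 1$, this product equals
$$\left(1 - \frac{\delta}{m}\right)^{\!m} \left(1 + \frac{\delta}{n-m}\right)^{\!n-m},$$
and a short Taylor expansion (using $\log(1 + x) = x - x^2/2 + O(x^3)$ for $|x| \le 1/2$) shows that the logarithm of this quantity is bounded uniformly in $n$ and $p$; in fact it equals $O(1/m + 1/(n-m)) = O(1)$. Combining the three estimates yields the claimed bound. The only mildly delicate part is controlling the ratios in the last step; everything else is a direct application of Stirling combined with unimodality.
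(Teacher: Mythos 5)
Your argument is correct and is exactly the route the paper intends: Observation~\ref{binmax} is stated there without proof, with only the remark that it ``follows by Stirling's formula'', and your reduction to the mode via unimodality followed by the Stirling computation is the standard way to carry this out. The one point worth tightening is the final Taylor step in the regime $m=1$ (or $n-m=1$), where $|\delta/m|$ may exceed $1/2$; there one can instead bound each factor directly by $\bigl(1+\tfrac1k\bigr)^k \le e$ (and by $1$ when the sign of $\delta$ makes the factor at most $1$), which handles all cases uniformly.
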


We are now ready to prove Proposition~\ref{prop:algMKcorr}.

\begin{proof}[Proof of Proposition \ref{prop:algMKcorr}]
We assume as usual that $p \le 1/2$, and note that the proof for $p > 1/2$ is similar. Let $f \colon \{0,1\}^n\to [0,1]$ and $\emptyset \ne K \subset [n]$ (if $|K| = 0$ then both sides are zero).
We begin by defining our coupling; the purpose is to make the values of $X_i$ outside $V$ independent of those inside.

We shall obtain the random variables $X$ and $Z$, defined in~\eqref{def:XYZ}, as follows. Let $Z^1\in \{0,1\}^K$,  $Z^2 \in\{0,1\}^{[n] \setminus K}$ and $Z^3,Z^4 \in \{0,1\}^n$ be such that
$$\P\big(Z^j_i = 1\big) = p,$$
independently for each $i$ and $j$. Similarly, let $W^1\in\{0,1\}^K$, $W^2\in\{0,1\}^{[n]\setminus K}$ and $W^3\in\{0,1\}^n$ be independent of the $Z^j_i$, and such that
$$\P\big( W^j_i = 1 \big) \,=\, \frac{1-2p}{2(1-p)},$$
independently for every $i$ and $j$. Set $X_i^j = \max\big\{ Z_i^j, W_i^j \big\}$, and observe that
$$\P\big( X_i^j = 1 \big) \,=\, \P\big( Z_i^j  = 1 \big) + \P\big( Z_i^j = 0 \big) \P\big( W_i^j = 1 \big) \,=\, p + \frac{(1-p)(1-2p)}{2(1-p)} \,=\, \frac{1}{2}$$
for every $i$ and $j$.

Next, we describe how to use the $X_i^j$ and $Z_i^j$ to assign values to coordinates, depending on the order in which they are queried by $\A$. Indeed, run the algorithm, and do the following:
\begin{itemize}
\item[1.] If $j \in K$ is queried, and is the $k^{th}$ element of $K$ to have been queried by $\A$, then set $Z_j := Z_k^1$ and $X_j := X_k^1$.
\item[2.] If $j \not\in K$ is queried, and is the $k^{th}$ element of $[n]\setminus K$ to have been queried by $\A$, then set $Z_j := Z_k^2$ and $X_j := X_k^2$.
\item[3.] When the algorithm stops, let $\pi : K \setminus V \to [|K \setminus V|]$ be an arbitrary bijection, and for each $j \in K \setminus V$ set $Z_j := Z_k^3$ and $X_j := X_k^3$, where $k = \pi(j)$.
\item[4.] Finally, let $Z_j := Z_j^4$ and $X_j := X_j^4$ for each $j \in [n] \setminus (V \cup K)$.
\end{itemize}
Note that $X$ is chosen uniformly and $Z$ according to the product measure with density $p$. Moreover, note that if $Z_i = 1$ then $X_i = 1$, so the coupling is as in~\eqref{def:XYZ}, as claimed.

Let $V \subset [n]$ be the (random) set of coordinates which are queried by the algorithm, and note that $V$ is independent of $Z^3$ and $W^3$. We first show that the set $V \cap K$ is likely to be small. Indeed, we have
$$\E\big[ |V\cap K|\big] \,=\, \sum_{j \in K} \delta_j(\A,p) \,\le\, |K|\delta_K(\A,p),$$
and so, if we define
$$B_1 \, := \, \big\{ |V\cap K| \ge |K| \delta_K(\A,p)^{2/3} \big\},$$
then $\P(B_1) \le \delta_K(\A,p)^{1/3}$, by Markov's inequality.

Next we shall deduce that, with high probability, the difference between the number of 0s and 1s on $V \cap K$ is less than that on $K \setminus V$. Indeed, let $S_k := \sum_{j=1}^k (2X_j^1 - 1)$ denote this difference on the first $k$ coordinates of $X^1$, and let $T_k := \sum_{j=1}^{k} (2X_j^3 - 1)$ denote the same thing for $X^3$. Let
$$B_2 \, :=\, \Big\{ \exists \, k \le |K| \delta_K(\A,p)^{2/3} \,:\, |S_k| \ge \sqrt{|K|} \delta_K(\A,p)^{1/3} \log n \Big\},$$
and let
$$B_3 \, := \, \left\{ |T_{|K\setminus V|}| \le \sqrt{|K|} \delta_K(\A,p)^{1/3} \log n \right\}.$$

\medskip
\noindent \textbf{Claim:} $\Pr\big( B_1 \cup B_2 \cup B_3 \big) \, = \,  O\left( \delta_K(\A,p)^{1/3} \log n \right)$.
\medskip

Before proving the claim, let's see how it implies the proposition. Set $Q = \big( B_1 \cup B_2 \cup B_3 \big)^c$, and 
let $\F$ be the sigma-algebra generated by $Z^1$, $Z^2$ and $W^1$. Then
$$\E\big[ M_K(X) \1_Q \,\big|\, \F \big] \,=\, \Pr_p\big( Q \,\big|\, \F \big) \E\big[ M_K(X) \,\big|\, \F, Q \big] \,=\, 0,$$
by symmetry, since $T_{|K\setminus V|}$ is equally likely to be positive or negative, and $Q$ implies $|T_{|K\setminus V|}| > |S_{|V \cap K|}|$. Thus by the claim, and since $\F$ determines $f(Z)$, we have
\begin{eqnarray*}
&& \left| \E \big[ f(Z) M_K(X) \big] \right| \; \le \; \left| \E \big[ f(Z) M_K(X) \1_Q \big] \right| \,+\, \P(Q^c) \\[+0.5ex]
&& \hspace{2.13cm} = \; \left| \E \Big[ f(Z) \Ex \big[ M_K(X) \1_Q \,\big|\, \F \big] \Big] \right| \,+\, \P(Q^c) \, = \, O\left( \delta_K(\A,p)^{1/3} \log n \right),
\end{eqnarray*}
 as required.

Thus, it only remains to prove the claim, which follows easily using Chernoff's inequality. We have already shown that $\P(B_1) \le \delta_K(\A,p)^{1/3}$, and so it will suffice to prove corresponding bounds for $B_2$ and $B_3 \cap B_1^c$. The bound for $B_2$ follows using Chernoff and the union bound. Indeed, let $t = |K| \delta_K(\A,p)^{2/3}$, and recall that $X^1$ was chosen uniformly. Thus, by Chernoff's inequality,
\begin{eqnarray*}
\P(B_2) & \le & \sum_{k=1}^{t} \Pr\Big( \big| 2 \cdot \xi_{k,1/2} - k \big| > \sqrt{|K|} \delta_K(\A,p)^{1/3} \log n \Big) \\
& \le & 2\sum_{k=1}^t \exp\left( -\frac{ |K| \delta_K(\A,p)^{2/3} \log^2 n }{ 8k } \right) \; \le \; 2t \cdot e^{-\log^2n/8} \; = \; O\big(\delta_K(\A,p)^{1/3}\big),
\end{eqnarray*}
as required.

Finally, we shall bound the probability of $B_3 \cap B_1^c$; that is, the probability that
$$|V \cap K| \le t = |K| \delta_K(\A,p)^{2/3} \quad \textup{and} \quad |T_{|K\setminus V|}| \le \sqrt{|K|} \delta_K(\A,p)^{1/3} \log n.$$
By Observation~\ref{binmax} and the union bound, we have
$$\Pr\Big( \big| 2 \cdot \xi_{m,1/2} - m \big| \le \sqrt{|K|} \delta_K(\A,p)^{1/3} \log n \Big) \; \le \; \sqrt{|K|} \delta_K(\A,p)^{1/3} \log n \cdot \frac{C_1}{\sqrt{m}},$$
for some constant $C_1 > 0$ and every $m \ge 1$. Since $V$ is determined by the information in $\mathcal{F}$, and since $X^3$ is uniformly distributed, we have
\begin{equation*}
\begin{aligned}
\Pr(B_3\cap B_1^c)\;&=\;\E\big[\Pr(B_1^c\cap B_3|\mathcal{F})\big]\;=\;\E\big[\mathbf{1}_{B_1^c}\cdot\Pr(B_3|\mathcal{F})\big]\\
&\le\; \E\left[\mathbf{1}_{B_1^c}\cdot\sqrt{|K|}\delta_K(\A,p)^{1/3}\log n\frac{C_1}{ \sqrt{ |K \setminus V| } }\right]\\
&\le\;\sqrt{|K|}\delta_K(\A,p)^{1/3}\log n\frac{2C_1}{\sqrt{3|K|}} \; = \; O\left( \delta_K(\A,p)^{1/3} \log n \right),
\end{aligned}
\end{equation*}
where we in the second inequality used that on $B_1^c$, we have $|K\setminus V|\ge |K|-t\ge 3|K|/4$, assuming that $t \le |K|/4$ (since otherwise $\delta_K(\A,p) \ge 1/8$, and the proposition is trivial).
This completes the proof of the claim, and hence of the proposition as well.
\end{proof}

It is now easy to deduce Theorem~\ref{algthm}. We shall use the following straightforward optimization lemma.

\begin{lemma}\label{lma:opt}
If $a_1\ge a_2 \ge \ldots \ge a_n > 0$, then
$$
\max\left\{\sum_{i=1}^n c_i^2 \,:\, c_1\ge c_2\ge\ldots\ge c_n\ge0 \textup{ and } \sum_{i=1}^k c_i
\le\sum_{i=1}^k a_i \textup{ for every } k \in [n] \right\} = \sum_{i=1}^n a_i^2.
$$
\end{lemma}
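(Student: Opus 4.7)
The plan is to reduce the problem to the Cauchy--Schwarz inequality via Abel summation. Write $C_k := \sum_{i=1}^k c_i$ and $A_k := \sum_{i=1}^k a_i$, with $C_0 = A_0 = 0$. The hypothesis is $C_k \le A_k$ for every $k \in [n]$, and $c_1 \ge c_2 \ge \ldots \ge c_n \ge 0$.

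First I would expand $\sum c_i^2$ using summation by parts. Writing $c_i = C_i - C_{i-1}$ and regrouping gives
$$
\sum_{i=1}^n c_i^2 \; = \; \sum_{i=1}^{n-1} (c_i - c_{i+1}) C_i \,+\, c_n C_n.
$$
The key observation is that since $(c_i)$ is non-increasing and non-negative, every coefficient $c_i - c_{i+1}$ and $c_n$ on the right-hand side is $\ge 0$. This is exactly what allows us to use the pointwise bound $C_i \le A_i$ to obtain
$$
\sum_{i=1}^n c_i^2 \; \le \; \sum_{i=1}^{n-1} (c_i - c_{i+1}) A_i \,+\, c_n A_n.
$$
Reversing the Abel summation on the right-hand side (using $a_i = A_i - A_{i-1}$) collapses it into $\sum_{i=1}^n c_i a_i$.

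The conclusion then follows by Cauchy--Schwarz: $\sum c_i a_i \le \bigl( \sum c_i^2 \bigr)^{1/2} \bigl( \sum a_i^2 \bigr)^{1/2}$, so combining with the previous chain gives
$$
\sum_{i=1}^n c_i^2 \; \le \; \Big( \sum_{i=1}^n c_i^2 \Big)^{1/2} \Big( \sum_{i=1}^n a_i^2 \Big)^{1/2},
$$
from which dividing yields $\sum c_i^2 \le \sum a_i^2$. Since the choice $c_i = a_i$ is feasible and achieves equality, this is the claimed maximum.

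There is no substantial obstacle here; the only delicate point is making sure the Abel summation is applied in the right direction so that the monotonicity of $(c_i)$ turns the coefficients non-negative, which is what enables the pointwise replacement of $C_i$ by $A_i$. Once that is in place, Cauchy--Schwarz closes the argument cleanly.
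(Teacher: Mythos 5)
Your argument is correct. The paper itself states Lemma~\ref{lma:opt} without proof (calling it a ``straightforward optimization lemma''), so there is nothing to compare against; your chain of Abel summation (valid because $c_i-c_{i+1}\ge0$ and $c_n\ge0$, which lets you replace $C_i$ by $A_i$), followed by Cauchy--Schwarz and the feasibility of $c_i=a_i$, is a clean and complete way to fill that gap. The only cosmetic point is the division by $\big(\sum c_i^2\big)^{1/2}$ at the end, which requires noting that the case $\sum c_i^2=0$ is trivial.
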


We are ready to prove Theorem~\ref{algthm}.

\begin{proof}[Proof of Theorem \ref{algthm}]
Let $r \in \N$ be fixed, and let $(f_n)_{n \ge 1}$ be a sequence of monotone functions $f_n \colon \{0,1\}^n \to [0,1]$. For each $n \in \N$, let $\A_1,\ldots,\A_r \in \A^*(f)$
and let $K_1, \ldots, K_r$ be a partition of $[n]$. Let $p \in (0,1)$, and suppose that
$$\delta_{K_i}(\A_i,p) \big( \log n \big)^6 \; \to \; 0$$
as $n \to \infty$ for each $i \in [r]$. We shall show that $I\!I_p(f_n) \to 0$ as $n \to \infty$, and hence deduce, by Theorem~\ref{BKSp}, that $(f_n)_{n\ge1}$ is NS$_p$.

Choose $C > 0$ so that Propositions~\ref{prop:InfMKcorr} and~\ref{prop:algMKcorr} both hold for $C$, and assume that $n \in \N$ is sufficiently large so that $\delta_{K_i}(\A_i,p)^{1/3} \log n \le 1/(2C)$ for each $i \in [r]$. To bound $I\!I_p(f_n) = \sum_{j=1}^n \Inf_{p,j}(f_n)^2$ from above, we shall first bound $\sum_{j \in K} \Inf_{p,j}(f_n)$ for every $K \subset [n]$, and then apply Lemma~\ref{lma:opt}. Let us assume for simplicity that $\delta_{K_i}(\A_i,p) \ge 1/n$ for some $i \in [r]$; the other case follows by an almost identical calculation.

\bigskip
\noindent \textbf{Claim:} For every $K \subset [n]$, we have
$$\sum_{j \in K} \Inf_{p,j}(f_n) \; \le \; \frac{C^2 r}{\min\{p,1-p\}} \sqrt{|K|} \max_{i \in [r]} \Big\{ \delta_{K_i}(\A_i,p)^{1/3} \Big\} \big( \log n \big)^{3/2}.$$

\begin{proof}[Proof of claim]
By Proposition~\ref{prop:InfMKcorr}, for every $K \subset [n]$ we have
$$\sum_{j \in K} \Inf_{p,j}(f_n) \; \le \; \frac{C}{\min\{p,1-p\}} \sqrt{|K|} \E\big[ f_n(Z)M_{K}(X) \big] \left( 1 + \sqrt{- \log\E\big[ f_n(Z) M_{K}(X) \big]} \right).$$
Moreover, by Proposition~\ref{prop:algMKcorr}, for every $i \in [r]$ and every $K \subset K_i$,
$$\E\big[ f_n(Z) M_{K}(X) \big] \,\le\, C\, \delta_{K_i}(\A_i,p)^{1/3} \log n.$$
Note that $x \big(1 + \sqrt{ - \log x} \big)$ is increasing on $(0,1/2)$, and recall that $C\, \delta_{K_i}(\A_i,p)^{1/3} \log n \le 1/2$. Thus, if $K \subset K_i$ for some $i \in [r]$, then
$$\sum_{j \in K} \Inf_{p,j}(f_n) \; \le \; \frac{C^2}{\min\{p,1-p\}} \sqrt{|K|} \max_{i \in [r]} \Big\{ \delta_{K_i}(\A_i,p)^{1/3} \Big\} \big( \log n \big)^{3/2},$$
since $\max_{i \in [r]} \delta_{K_i}(\A_i,p) \ge 1/n$. Summing over $i \in [r]$, the claim follows.
\end{proof}

Without loss of generality, assume that
$$\Inf_{p,1}(f_n) \; \ge \; \ldots \; \ge \; \Inf_{p,n}(f_n),$$
and apply Lemma~\ref{lma:opt} with $c_j = \Inf_{p,j}(f_n)$, and
$$a_j \, = \,  \frac{C^2 r}{\min\{p,1-p\}} \max_{i \in [r]} \Big\{ \delta_{K_i}(\A_i,p)^{1/3} \Big\} \big( \log n \big)^{3/2} \big( \sqrt{j} - \sqrt{j-1} \big).$$
By the claim applied to $K = [k]$, we have, for each $k \in [n]$,
$$\sum_{j=1}^k c_j \; = \; \sum_{j = 1}^k \Inf_{p,j}(f_n) \; \le \; \frac{C^2 r}{\min\{p,1-p\}} \sqrt{k} \max_{i \in [r]} \Big\{ \delta_{K_i}(\A_i,p)^{1/3} \Big\} \big( \log n \big)^{3/2} \;=\; \sum_{j=1}^k a_j,$$
and hence, writing $C^\prime=\big(C^2r/\min\{p,1-p\}\big)^2$, since $p$ is fixed and $\sum_j \big( \sqrt{j} - \sqrt{j-1} \big)^2 = O\big( \log n \big)$, by Lemma~\ref{lma:opt} we have
\begin{eqnarray*}
\sum_{j=1}^n \Inf_{p,j}^2(f_n) & \le & \sum_{j=1}^n a_j^2 \; = \; C^\prime \max_{i \in [r]} \Big\{ \delta_{K_i}(\A_i,p)^{2/3} \Big\} \big( \log n \big)^3 \sum_{j=1}^n \big( \sqrt{j} - \sqrt{j-1} \big)^2 \\
& = & C^\prime \max_{i \in [r]} \Big\{ \delta_{K_i}(\A_i,p)^{2/3} \Big\} \big( \log n \big)^4 \; \to \; 0
\end{eqnarray*}
as $n \to \infty$, as claimed. Thus, by Theorem~\ref{BKSp}, $(f_n)_{n\ge1}$ is NS$_p$, as required.
\end{proof}

We finish this section by proving the following closely related result, which was also proved in~\cite[Theorem~1.6]{BKS} in the case $p = 1/2$ (and for functions into $[0,1]$). In fact we shall not need it, but since it follows immediately from the uniform case and Proposition~\ref{prop:hf}, and may be of independent interest, we include it for completeness.

Given a function $h \colon \{0,1\}^n \to [0,1]$, define
$$\Lambda(h) \; := \; \max_{K \subset [n]} \E\big[ h(X)M_K(X) \big].$$
In particular, $\Lambda(h_f) = \ds\max_{K\subset[n]} \E\big[f(Z)M_K(X)\big]$.

\begin{thm}\label{MKcorrelationnew}
There exists a constant $C > 0$ such that, if $f \colon \{0,1\}^n \to [0,1]$ is monotone and $p \in (0,1)$, then
$$I\!I_p(f) \; \le \; \frac{C}{\min\{p^2,(1-p)^2\}}\Lambda^2(h_f)\big(1 - \log\Lambda(h_f)\big)\log n.$$
\end{thm}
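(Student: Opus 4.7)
The proof is essentially a one-line application of the reduction method developed in Section~\ref{hfsec}. The plan is to apply the $p=1/2$ case of the theorem, proved in \cite[Theorem~1.6]{BKS}, to the smoothed function $h_f$, and then translate the conclusion back to $f$ using Proposition~\ref{prop:hf}.

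First I would observe that since $f$ is monotone, Proposition~\ref{prop:hf}$(i)$ guarantees that $h_f \colon \{0,1\}^n \to [0,1]$ is also monotone, so the uniform version of the result applies to $h_f$. This yields a constant $C_0 > 0$ (independent of $n$, $f$, $p$) such that
\[
I\!I(h_f) \; \le \; C_0 \, \Lambda^2(h_f) \big( 1 - \log \Lambda(h_f) \big) \log n.
\]

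Next I would use Proposition~\ref{prop:hf}$(ii)$, which says that when $f$ is monotone one has the \emph{equality} $\Inf_{1/2,i}(h_f) = 2\bar{p}\,\Inf_{p,i}(f)$ for every $i \in [n]$, where $\bar{p} = \min\{p,1-p\}$. Squaring and summing over $i$ gives
\[
I\!I(h_f) \; = \; 4\bar{p}^{\,2}\, I\!I_p(f).
\]
Combining this identity with the displayed inequality above and dividing by $4\bar{p}^{\,2}$ yields
\[
I\!I_p(f) \; \le \; \frac{C_0}{4\min\{p^2,(1-p)^2\}}\,\Lambda^2(h_f)\big( 1 - \log \Lambda(h_f) \big) \log n,
\]
which is the desired bound with $C = C_0/4$. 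There is essentially no obstacle: both ingredients (the monotonicity of $h_f$ and the exact relation between influences under $h_f$ and $f$) have already been established in Proposition~\ref{prop:hf}, and the uniform-measure version of the inequality is quoted directly from \cite{BKS}. The only minor point to mention is that, by the definition, $\Lambda(h_f) = \max_{K \subset [n]} \E[h_f(X) M_K(X)] = \max_K \E[f(Z) M_K(X)]$, since $M_K(X)$ is $\sigma(X)$-measurable and conditioning on $X$ inside the expectation reproduces $h_f(X)$, so the quantity appearing in the uniform bound for $h_f$ coincides with the $\Lambda(h_f)$ in the statement.
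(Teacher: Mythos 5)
Your proof is correct and is essentially identical to the paper's own argument: apply the uniform ($p=1/2$) case of the theorem to the monotone function $h_f$, then use the equality $I\!I_{1/2}(h_f) = 4\bar{p}^{\,2} I\!I_p(f)$ from Proposition~\ref{prop:hf}$(ii)$ to translate the bound back to $f$. The observation that $\Lambda(h_f)=\max_K \E[f(Z)M_K(X)]$ is also exactly the identification the paper makes just before stating the theorem.
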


\begin{proof}
We apply the uniform case to the function $h_f$. By Proposition \ref{prop:hf}, it follows that
$$4\bar{p}^2I\!I_p(f) \, = \, I\!I_{1/2}(h_f) \, \le \, C\Lambda^2(h_f) \big( 1 - \log\Lambda(h_f) \big)\log n,$$
as required.
\end{proof}

\section{Hypergraphs}\label{hypersec}

In this section we shall prove Theorem~\ref{varB}, which will allow us to bound the variance (in $\PB_{\lambda_c/p}$) of the probability of crossing a rectangle in the model $\PP^B_p$. Although one can think of all the results in this section in terms of events on the cube $\{0,1\}^n$, it will be convenient for us to use the language of hypergraphs. For background on Graph Theory, see~\cite{MGT}.

Recall that a hypergraph $\HH$ is just a collection of subsets of $[n]$, which we refer to as the edges of $\HH$. We shall write $\HH_m$ for the $m$-uniform hypergraph contained in $\HH$, that is, the collection of edges with $m$ elements, and 
recall that
$$r_{\HH}(B,p) \; := \; \Pr_p^B\big( A \in \HH \big),$$
 where $A$ is a $p$-subset of $B$. Throughout this section, $B$ will denote a $q$-subset of $[n]$.

The proof of Theorem~\ref{varB} is in two parts: first we shall prove the corresponding result for sets $A$ and $B$ of fixed size; then we shall deduce the result for $p$- and $q$-subsets.

\subsection{The proof for sets of fixed size}

Let us begin by informally illustrating the central idea with a simple example.  Let $G$ be a (large) graph with vertex set $[n]$, and consider the restriction of $G$ to a random subset $S \subset [n]$ selected uniformly at random from the sets of size $k$. If $k = 2$ then the resulting graph $G[S]$ will have density either $0$ or $1$, which will typically be quite far from the density of the original graph.  However, once $k$ is a large constant the density of $G[S]$ is already unlikely to be far from the density of $G$. Indeed, it is elementary to bound the variance of this density.

The following proposition extends this result to hypergraphs.   Given a hypergraph $\HH$ on vertex set $[n]$, a subset $S\subset [n]$ and an integer $0 \le m \le n$, define
$$X_{m}(S) \; := \; \big| \big\{ e \in \HH_m \,:\, e \subset S \big\} \big|,$$
and $\tilde{X}_{m}(S) := X_m(S) / {{|S|} \choose m}$.

\begin{prop}\label{var}
Let $n,m,k \in \N$, and suppose that $n \ge k \ge m$, and that $n \ge 3m^3$ and $n\ge km/2$. Let $\HH$ be a hypergraph on vertex set $[n]$, and let $B_k \subset [n]$ be a uniformly chosen subset of size $k$. Then
$$\Var\big( X_m(B_k) \big) \; \le \; \frac{48 m}{k} {k \choose m}^{2}$$
and
$$\Var\big( \tilde{X}_m(B_k) \big) \; \le \; \ds\frac{48 m}{k}.$$
\end{prop}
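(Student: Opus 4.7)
My plan is to decompose the variance of $X_m(B_k)$ into contributions indexed by the intersection size $t = |e \cap f|$ of pairs of $m$-edges, show that the $t = 0$ contribution is non-positive, and bound the remaining contributions using Bey's inequality. Writing $X_m(B_k) = \sum_{e \in \HH_m}\mathbf{1}_{e \subset B_k}$ and computing the covariance of two indicators (which depends only on $t$), one obtains
\[
\Var(X_m(B_k)) \;=\; \sum_{t=0}^{m} A_t\,(P_t - P_m^*),
\]
where $A_t := \#\{(e,f) \in \HH_m^2 : |e \cap f| = t\}$, $P_t := \binom{k}{2m-t}/\binom{n}{2m-t}$, and $P_m^* := \binom{k}{m}^2/\binom{n}{m}^2$.

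The first step will be to show $P_0 \le P_m^*$, so that the $t = 0$ term may be dropped. After cancellation this reduces to $\binom{k-m}{m}/\binom{k}{m} \le \binom{n-m}{m}/\binom{n}{m}$, which holds because both sides have the form $\prod_{i=0}^{m-1}\bigl(1 - m/(x-i)\bigr)$ and this ratio is monotone increasing in $x$ on $[m,\infty)$.

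For $t \ge 1$, I would bound $A_t$ using Bey's inequality. Since $\binom{s}{t}\ge 1$ for $s\ge t$, one has $A_t \le \sum_{s \ge t}\binom{s}{t}A_s = d_2(\HH_m, t)$, and Bey then splits $d_2(\HH_m, t)$ into a quadratic piece proportional to $E_m^2$ and a linear piece proportional to $E_m$. Using the useful factorisation $P_t = \binom{k}{m}\binom{k-m}{m-t}/\bigl[\binom{n}{m}\binom{n-m}{m-t}\bigr]$, together with the elementary estimates $E_m \le \binom{n}{m}$ and $\binom{k-j}{i}/\binom{n-j}{i} \le (k/n)^i$ for $k \le n$, the two resulting sums over $t$ become geometric-type series. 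The hypothesis $n \ge 3m^3$ controls the common ratio in the quadratic series, and $n \ge km/2$ does the same for the linear series, so each series sums to a constant multiple of $m\binom{k}{m}^2/k$. Adding the two pieces gives the first inequality, and dividing by $\binom{k}{m}^2$ yields the second.

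I expect the main obstacle to be the delicate bookkeeping required to extract the explicit constant $48$ with the correct order in $m$: a naive estimate using $\Cov_t \le P_t$ (instead of $P_t - P_m^*$) loses a factor of $m$ in the quadratic piece of Bey, since the dominant $t=1$ contribution is of order $m^2/k$ rather than $m/k$. Recovering the missing factor will require keeping the cancellation with $P_m^*$; the strong assumption $n \ge 3m^3$ is precisely what will control the ratio $\binom{n}{m}(k/n)^m/\binom{k}{m}$ by a constant close to $1$, allowing $E_m^2 P_m^*$ to be compared against $\binom{k}{m}^2$ with only a harmless multiplicative error.
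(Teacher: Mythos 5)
Your skeleton is in fact the same as the paper's: expand $\Ex[X_m(B_k)^2]$ over intersection sizes $t$, apply Bey's inequality for $t\ge1$, and rely on a cancellation against $\Ex[X_m(B_k)]^2$ to avoid losing a factor of $m$ in the quadratic part of Bey's bound. You have also correctly diagnosed where the danger lies. The difficulty is that your very first step destroys the cancellation you later need. Write $E_m=e(\HH_m)$, $\beta_m=E_m/\binom{n}{m}$, and let $Y_t(k,m)=\binom{k}{2m-t}\binom{2m-t}{m}\binom{m}{t}$ be the number of ordered pairs of $m$-subsets of a $k$-set meeting in exactly $t$ points, so that $\binom{k}{m}^2=\sum_{t=0}^mY_t(k,m)$. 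The quadratic part of Bey's bound, paired with $P_t$, gives $\sum_{t\ge1}\frac{\binom{m}{t}\binom{m-1}{t}}{\binom{n-1}{t}}E_m^2P_t\le\beta_m^2\sum_{t\ge1}Y_t(k,m)$, which is of order $\beta_m^2\min\{1,\tfrac{m^2}{k}\}\binom{k}{m}^2$. Once the $t=0$ term has been discarded via $P_0\le P_m^*$, the only negative terms left in your decomposition are $-\sum_{t\ge1}A_tP_m^*=-(E_m^2-A_0)P_m^*$; and $E_m^2-A_0$ can be as small as $O(m^2/n)\,E_m^2$ (take $\HH_m$ complete, so $A_0=\binom{n}{m}\binom{n-m}{m}=(1-O(m^2/n))E_m^2$), whence, using $n\ge km/2$, this negative contribution is only $O(\tfrac{m}{k})\beta_m^2\binom{k}{m}^2$ and cannot absorb the quadratic overcount. (Note also that Bey provides an \emph{upper} bound on $A_t$, so in the subtracted sum $\sum_{t\ge1}A_tP_m^*$ you may only replace $A_t$ by a lower bound, in general $0$.) Your method therefore terminates at $O(\tfrac{m^2}{k})\binom{k}{m}^2$, e.g.\ already for $\beta_m=1$, $k=m^3$, $m$ large. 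This loss is not cosmetic: in the application (Proposition~\ref{prop2}) one takes $m\approx pk$, so $\tfrac{m}{k}\approx p$ is what is needed, while $\tfrac{m^2}{k}\approx p^2k$ is useless.

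The repair is precisely the move you declined to make. Keep the $t=0$ term: bound $A_0\le E_m^2$ in the \emph{positive} summand $A_0P_0$ only, and retain $-E_m^2P_m^*=-\beta_m^2\sum_{t=0}^mY_t(k,m)$ exactly. Since $\binom{n}{m}^2P_0=\frac{\binom{k}{2m}}{\binom{n}{2m}}\sum_{t=0}^mY_t(n,m)\le Y_0(k,m)+\frac{k}{n}\sum_{t\ge1}Y_t(k,m)$ (the ratio $\binom{k}{2m}\binom{n}{2m-t}/\big[\binom{n}{2m}\binom{k}{2m-t}\big]$ is at most $(k/n)^t$), the combination $E_m^2(P_0-P_m^*)$ contributes $-(1-\tfrac{k}{n})\beta_m^2\sum_{t\ge1}Y_t(k,m)$: a negative quantity of exactly the right size, which cancels the quadratic overcount and leaves $\sum_{t\ge1}\big(\tfrac{2t}{m}\beta_m+\tfrac{k}{n}\beta_m^2\big)Y_t(k,m)$. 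The decay of $Y_t$ (which, incidentally, is governed by the size of $k$ relative to $m^2$, not by $n\ge3m^3$; the latter hypothesis is what validates the binomial-coefficient comparisons behind the two terms of Bey's bound, while $n\ge km/2$ turns $k/n$ into $2/m$) then yields $\frac{48m}{k}\binom{k}{m}^2$. Your observations on $P_0\le P_m^*$ and the factorisation of $P_t$ are correct but, as used, counterproductive.
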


We remark that with a little extra effort, one could improve the upper bounds in Proposition~\ref{var} by a factor of $\beta_m = e(\HH_m) / {n \choose m}$. Since we shall not need such a strengthening, however, we leave the details to the reader. In order to keep the presentation simple, we also make no attempt to optimize the constant.

We shall use some straightforward relations between binomial coefficients in the proof of Proposition~\ref{var}; we state them here for convenience.

\begin{obs}\label{binobs}
Let $n,k,m,t$ be integers such that $k \ge m\ge t \ge 1$ and $n \ge 3m^3$. Then
\begin{itemize}
\item[$(a)$] $\ds{k \choose m}^2 \, = \, \sum_{t=0}^m {k \choose {2m-t}} {{2m-t} \choose m} {m \choose t}$.\\[+1ex]
\item[$(b)$] $\ds{{m-1} \choose t} {n \choose m}^2 \, \le \, {n \choose {2m-t}} {{2m-t} \choose m} {{n-1} \choose t}$.\\[+1ex]
\item[$(c)$] $\ds{{m-1} \choose {t-1}} {{n-t-1} \choose {m-t}} {n \choose m} \, \le \, \frac{2t}{m} {n \choose {2m-t}} {{2m-t} \choose m} {m \choose t}$.
\end{itemize}
\end{obs}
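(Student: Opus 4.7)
The three parts are handled separately by elementary means.

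Part (a) is a combinatorial identity, which I would prove by double counting: the left-hand side $\binom{k}{m}^2$ counts ordered pairs $(A,B)$ of $m$-subsets of $[k]$, and grouping such pairs by $t = |A \cap B|$, those with fixed $t$ are in bijection with triples consisting of (i) a union $A \cup B$ of size $2m - t$ in $[k]$, (ii) an $m$-subset $A$ of $A \cup B$, and (iii) the $t$-subset $A \cap B$ of $A$ (which together with $A$ determines $B$). This contributes $\binom{k}{2m-t}\binom{2m-t}{m}\binom{m}{t}$, and summing over $t \in \{0,1,\ldots,m\}$ yields the identity.

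For (b) and (c) the key first step is the auxiliary identity $\binom{n}{2m-t}\binom{2m-t}{m} = \binom{n}{m}\binom{n-m}{m-t}$, obtained by counting pairs $(A,S)$ with $A \subseteq S \subseteq [n]$, $|A| = m$, $|S| = 2m - t$ in two ways (first $S$ then $A$, or first $A$ then $S \setminus A \subseteq [n]\setminus A$). Substituting this, (b) reduces to
$$\binom{m-1}{t}\binom{n}{m} \le \binom{n-m}{m-t}\binom{n-1}{t},$$
and (c), after using $\binom{m-1}{t-1} = (t/m)\binom{m}{t}$, reduces to $\binom{n-t-1}{m-t} \le 2\binom{n-m}{m-t}$.

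I would then prove each reduced inequality by writing the ratio of the two sides as a product of factors of the form $(n - a_i)/(n - b_i)$ and bounding the product using $n \ge 3m^3$. In (c) there are $m - t$ factors, each of the form $1 + (m-t-1)/(n-m-i)$, so the product is at most $\exp\!\bigl(m^2/(n - 2m)\bigr) \le \exp(2/(3m)) < 2$, giving the result. In (b) an analogous computation yields an upper bound of the form $\tfrac{m-t}{m}\bigl(1 + O(m^2/n)\bigr)\exp\!\bigl(O(m^2/n)\bigr)$ for the LHS/RHS ratio, and $n \ge 3m^3$ ensures the error factors are dominated by the prefactor $(m-t)/m \le (m-1)/m < 1$. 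The main obstacle is purely bookkeeping: inequality (b) is asymptotically tight (the ratio LHS/RHS tends to $(m-t)/m$ as $n \to \infty$ with $m,t$ fixed), so there is no room to spare and one must carefully track which finite-$n$ corrections cancel against which; no deeper structural argument is needed.
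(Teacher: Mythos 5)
Your proof is correct and takes essentially the same route as the paper's: part (a) is the identical double-counting argument, and parts (b) and (c) are proved, as in the paper, by cancelling common factors and bounding the resulting ratio of products of linear terms using $n \ge 3m^3$. The paper merely asserts that ``more careful calculation shows that $n \ge 3m^3$ suffices,'' so your reduction via $\binom{n}{2m-t}\binom{2m-t}{m}=\binom{n}{m}\binom{n-m}{m-t}$ and the subsequent estimates simply supply the details the paper omits.
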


\begin{proof}
For $(a)$, note that both sides count the number of pairs of $m$-subsets of a fixed $k$-set; on the right-hand side we have partitioned according to their intersection. For $(b)$ and $(c)$ simply cancel common terms, and note that, for fixed $m$ and $t$,
$$\frac{(n-m)!^2}{(n - 2m + t)! (n-t)!} \; \to \; 1$$
as $n \to \infty$. More careful calculation shows that $n \ge 3m^3$ suffices.
\end{proof}


We shall use Bey's inequality in order to prove the following lemma, from which Proposition~\ref{var} follows easily. Let
$$Y_{t}(k,m) \, := \, {k \choose {2m-t}} {{2m-t} \choose m} {m \choose t}$$
denote the number of pairs of $m$-subsets of a fixed $k$-set which have $t$ common elements.

\begin{lemma}\label{lem:var}
Let $k,m,n \in \N$, with $n \ge k \ge 2m$ and $n \ge 3m^3$. Let $\HH$ be a hypergraph on $[n]$, and let $B_k \subset [n]$ be a uniformly chosen subset of size $k$. Then
$$\Var\big( X_{m}(B_k) \big) \; \le \; 2\beta_m \ds\sum_{t=1}^{m} \left(\frac{t}{m}+\frac{k}{2n}\right) Y_t(k,m)
.$$
\end{lemma}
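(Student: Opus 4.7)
The proof computes $\Var(X_m(B_k)) = \Ex[X_m(B_k)^2] - \Ex[X_m(B_k)]^2$ by classifying ordered pairs $(e_1, e_2) \in \HH_m^2$ according to the size $t = |e_1 \cap e_2|$ of their intersection. Writing $N_t$ for the number of such pairs and $q_a := \binom{k}{a}/\binom{n}{a}$ for the probability that a specific $a$-subset of $[n]$ is contained in $B_k$, one has
\[
\Ex[X_m(B_k)^2] = \sum_{t=0}^m N_t\, q_{2m-t} \quad \text{and} \quad \Ex[X_m(B_k)] = e(\HH_m)\, q_m.
\]
Since $d_2(\HH_m, t) = \sum_{s \ge t} \binom{s}{t} N_s \ge N_t$, Bey's inequality yields, for each $t \ge 1$, the bound $N_t \le \alpha_t\, e(\HH_m)^2 + \gamma_t\, e(\HH_m)$ with $\alpha_t := \binom{m}{t}\binom{m-1}{t}/\binom{n-1}{t}$ and $\gamma_t := \binom{m-1}{t-1}\binom{n-t-1}{m-t}$; for $t = 0$ we only need the trivial $N_0 \le e(\HH_m)^2$.

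Plugging these in and writing $\beta_m := e(\HH_m)/\binom{n}{m}$, the linear ($\gamma_t$) contribution is the easier of the two: Observation~\ref{binobs}(c) gives $\gamma_t \le (2t/m)\, Y_t(n,m)/\binom{n}{m}$, and the identity $q_{2m-t} Y_t(n,m) = Y_t(k,m)$ (immediate from the definitions) converts the corresponding sum into $2\beta_m \sum_{t \ge 1}(t/m)\, Y_t(k,m)$, producing the $t/m$ part of the stated inequality. The quadratic ($\alpha_t$) contribution is more delicate, because Observation~\ref{binobs}(b) provides the convenient inequality $\alpha_t \le Y_t(n,m)/\binom{n}{m}^2$ only for $t \ge 1$. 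Consequently the boundary contribution $N_0\, q_{2m} \le e(\HH_m)^2 q_{2m}$ from $t = 0$ must be handled jointly with $-\Ex[X_m(B_k)]^2 = -e(\HH_m)^2 q_m^2$. Using $\binom{k}{m}^2 = \sum_t Y_t(k,m)$ (Observation~\ref{binobs}(a)) together with $q_{2m} Y_0(n,m) = Y_0(k,m)$, the $Y_0$ terms cancel and the whole quadratic part collapses to
\[
\beta_m^2 \sum_{t \ge 1} Y_t(k,m) \cdot \frac{q_{2m}}{q_{2m-t}}.
\]

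The final step is to estimate the ratio $q_{2m}/q_{2m-t}$, which expanded as a telescoping product equals $\prod_{j=1}^t (k - 2m + j)/(n - 2m + j)$. Each factor is at most $k/n$: rearranging, $(k-2m+j)n \le k(n-2m+j)$ is equivalent to $(n-k)(2m-j) \ge 0$, which holds since $n \ge k$ and $j \le t \le m < 2m$. Hence $q_{2m}/q_{2m-t} \le (k/n)^t \le k/n$ for $t \ge 1$; combined with $\beta_m^2 \le \beta_m$ (valid because $\beta_m \le 1$) this upper-bounds the quadratic part by $2\beta_m \sum_{t \ge 1}(k/(2n))\, Y_t(k,m)$, giving the $k/(2n)$ contribution. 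Adding the two bounds yields the claim.

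The main obstacle is the algebraic bookkeeping around the boundary $t = 0$: Bey's inequality becomes trivial there, and Observation~\ref{binobs}(b) fails, so one is forced to absorb the $t = 0$ contribution into $-\Ex[X_m(B_k)]^2$ and repackage the sum via the identities above. Only after this repackaging does the clean sum over $t \ge 1$ involving $q_{2m}/q_{2m-t}$ emerge, and it is precisely from this ratio, rather than from the use of Bey's inequality, that the $k/(2n)$ term of the stated bound arises.
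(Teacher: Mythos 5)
Your proof is correct and follows essentially the same route as the paper's: the decomposition of $\Ex[X_m(B_k)^2]$ by intersection size, the bound $N_t\le d_2(\HH_m,t)$ fed into Bey's inequality together with Observation~\ref{binobs}, the absorption of the $t=0$ term into $-\Ex[X_m(B_k)]^2$ via Observation~\ref{binobs}(a), and the estimate $q_{2m}/q_{2m-t}\le (k/n)^t\le k/n$ are all exactly the paper's steps, merely with the bookkeeping arranged slightly differently.
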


\begin{proof}
Let $\alpha(\HH,t) := \big| \big\{ (e,f) : e,f \in \HH \textup{ and } |e \cap f| = t \big\} \big|$ denote the number of pairs $(e,f)$ of edges of $\HH$ such that $|e \cap f| = t$. We first claim that
\begin{equation}\label{eq:X^2}
\ds\Ex\big[ X_{m}(B_k)^2 \big] \,=\,  \sum_{t=0}^m \alpha(\HH_m,t) \frac{ {k \choose {2m-t}} }{ {n \choose {2m-t}} }.
\end{equation}
Indeed, writing $\1_A$ for the indicator function of the event $A$, and ${[n] \choose k}$ for the collection of subsets of $[n]$ of size $k$, we obtain
$$\Ex\big[ X_{m}(B_k)^2 \big] \; = \; \frac{1}{ {n \choose k} }\sum_{S \in {{[n]} \choose k}} \sum_{e,f \in \HH_m} \1_{\{e \cup f \subset S\}} \; = \; \frac{1}{ {n \choose k} } \sum_{t=0}^m \sum_{e,f \in \HH_m} \1_{\{|e \cap f| = t\}} \sum_{S \in {{[n]} \choose k}}\1_{\{e \cup f \subset S\}}.$$
But if $|e \cap f| = t$ then $|e \cup f| = 2m - t$, and so there are exactly ${{n-2m+t} \choose {k-2m+t}}$ sets $S$ of size $k$ such that $e \cup f \subset S$. Moreover, ${{n-2m+t} \choose {k-2m+t}} {n \choose {2m-t}} = {n \choose k}{k \choose {2m-t}}$, and hence
$$\Ex\big[ X_{m}(B_k)^2 \big] \; = \; \sum_{t=0}^m \sum_{e,f \in \HH_m} \1_{\{|e\cap f| = t\}} \frac{ {k \choose {2m-t}} }{ {n \choose {2m-t}} },$$
as claimed.

Next, observe that $\alpha(\HH_m,t) \le d_2(\HH_m,t)$, where $d_2(\HH_m,t)$ denotes the sum of $d_\HH(T)^2$ over all $t$-sets in $[n]$, and recall that $e(\HH_m) = \beta_m {n \choose m}$. Hence, by Bey's inequality and Observation~\ref{binobs},
\begin{eqnarray}\label{eq:alpha}
\alpha(\HH_m,t)  \frac{ {k \choose {2m-t}} }{ {n \choose {2m-t}} } & \le & \left( \frac{ {m \choose t} { {m-1} \choose t} }{ {{n-1} \choose t} } e(\HH_m)^2 \,+\, {{m-1} \choose {t-1}} {{n-t-1} \choose {m-t}} e(\HH_m) \right)  \frac{ {k \choose {2m-t}} }{ {n \choose {2m-t}} } \nonumber\\
& \le & \left( \beta_m^2 + \frac{2t}{m} \cdot \beta_m \right) {k \choose {2m-t}}{{2m-t} \choose m} {m \choose t}
\end{eqnarray}
for every $1 \le t \le m$. Moreover, $\alpha(\HH_m,0)\le e(\HH_m)^2=\beta_m^2{n \choose m}^2$, so by Observation~\ref{binobs}$(a)$
\begin{equation*}
\alpha(\HH_m,0)  \frac{ {k \choose {2m}} }{ {n \choose {2m}} } \; \le \; \beta_m^2{n \choose m}^2\frac{ {k \choose {2m}} }{ {n \choose {2m}} } \; = \; \beta_m^2\frac{ {k \choose {2m}} }{ {n \choose {2m}} }\sum_{t=0}^m {n \choose {2m-t}} {{2m-t} \choose m} {m \choose t}.
\end{equation*}
Cancelling common terms, we easily see that for each $1\le t\le m$
$$
\frac{{k \choose {2m}}{n \choose {2m-t}} }{{n \choose {2m}}{k\choose 2m-t} } \;\le\; \left(\frac{k}{n}\right)^t \;\le\; \frac{k}{n}.
$$
Hence,
\begin{equation}\label{eq:t=0}
\alpha(\HH_m,0)  \frac{ {k \choose {2m}} }{ {n \choose {2m}} } \; \le \; \beta_m^2{k \choose {2m}} {{2m} \choose m}+\frac{k}{n}\cdot\beta_m^2\sum_{t=1}^m {k \choose {2m-t}} {{2m-t} \choose m} {m \choose t}.
\end{equation}
Finally,
\begin{equation}\label{eq:Ex}
\Ex\big[ X_{m}(B_k) \big]^2 \; = \; \beta_m^2 {k \choose m}^2 \; = \; \beta_m^2\sum_{t=0}^m {k \choose {2m-t}} {{2m-t} \choose m} {m \choose t},
\end{equation}
by Observation~\ref{binobs}$(a)$. Combining~\eqref{eq:X^2},~\eqref{eq:alpha},~\eqref{eq:t=0} and~\eqref{eq:Ex}, we obtain
\begin{eqnarray*}
\Var\big( X_{m}(B_k) \big) 
& \le & \sum_{t=1}^m \left(\frac{2t}{m}\cdot\beta_m+\frac{k}{n}\cdot\beta_m^2 \right) {k \choose {2m-t}} {{2m-t} \choose m} {m \choose t},
\end{eqnarray*}
as required.
\end{proof}

It is easy to deduce Proposition~\ref{var} from Lemma~\ref{lem:var}.

\begin{proof}[Proof of Proposition~\ref{var}]
We shall prove the claimed bound on $\Var\big( X_{m}(B_k) \big)$; the second statement follows immediately from the first, since $X_m(B_k) = {k \choose m} \tilde{X}_{m}(B_k)$. The result is trivial for $m \le k \le 48m$, so we can assume that $k \ge 48m$. (In fact we shall only use that $k \ge 4m$.)

First, note that by Lemma~\ref{lem:var}, and since $n \ge km/2$, we have
\begin{equation}\label{ytbound}
\Var\big(X_{m}(B_k)\big) \;\le\; 2 \sum_{t=1}^{m} \frac{t+1}{m}Y_{t}(k,m),
\end{equation}
where $Y_{t}(k,m) = {k \choose {2m-t}} {{2m-t} \choose m} {m \choose t}$. We shall see that most of the weight of the $Y_{t}(k,m)$ is concentrated on terms with small $t$. We split into two cases, depending on the size of $m$.

\bigskip
\noindent \textbf{Case 1:} $k \ge 3m^2$.

\bigskip
We shall prove that
\begin{equation}\label{eq:C1}
\sum_{t=1}^m\frac{t+1}{m} Y_t(k,m)\;\le\; \frac{4}{m} Y_1(k,m) \; \le \;  \frac{4m}{k} {k\choose m}^2.
\end{equation}
Indeed, first note that
\begin{equation}\label{eq:expdec}
\ds\frac{(t+2)Y_{t+1}(k,m)}{(t+1)Y_{t}(k,m)} \; = \; \frac{(t+2)(m-t)^2}{(t+1)^2(k-2m+t+1)} \; \le \; \frac{3m^2}{2(t+1)(k-2m)} \; \le \;  \frac{1}{2},
\end{equation}
since $k - 2m \ge k/2$ and $(t+1)k \ge 2k \ge 6 m^2$. This proves the first inequality in~\eqref{eq:C1}; for the second, observe that
$$Y_1(k,m) \;=\; \frac{k(k-1)\ldots(k-2m+2)}{(m-1)!^2} \; \le \; \frac{m^2}{k} {k \choose m}^2,$$
as claimed. By~\eqref{ytbound}, we obtain $\Var\big(X_{m}(B_k)\big) \le \ds\frac{8m}{k} {k\choose m}^2$.

\bigskip
\noindent \textbf{Case 2:} $k \le 3m^2$.

\bigskip
Let $a := \big\lfloor 6 m^2 / k \big\rfloor$, and observe that~\eqref{eq:expdec} holds whenever $t \ge a$. Thus
\begin{equation}
\sum_{t=a}^m \frac{t+1}{m} Y_t(k,m) \; \le \; \frac{2(a+1)}{m} Y_{a}(k,m) \; \le \;  \frac{18m}{k} {k\choose m}^2,
\end{equation}
since $Y_{a}(k,m) \le {k\choose m}^2$ and $a+1 \le 9m^2/k$. Moreover, it is immediate that
\begin{equation}
\sum_{t=1}^{a-1} \frac{t+1}{m} Y_t(k,m) \; \le \; \frac{a}{m} \sum_{t=1}^{a-1} Y_t(k,m) \; \le \;  \frac{6m}{k} {k\choose m}^2.
\end{equation}
By~\eqref{ytbound}, we obtain $\Var\big(X_{m}(B_k)\big) \le \ds\frac{48m}{k} {k\choose m}^2$, as required.
\end{proof}

\subsection{The proof for random-sized sets}

We shall now deduce Theorem~\ref{varB} from Proposition~\ref{var}. Using the conditional variance formula $\Var(X)=\Var\big(\Ex[X|\,Y]\big)+\Ex\big[\Var(X|\,Y)\big]$, the variance we want to control may be expressed as
\begin{equation}\label{eq:condvar}
\Var_q\big(r_\HH(B,p)\big)\,=\,\Var_q\big(\Ex_q\big[r_\HH(B,p)\big|\,|B|\big]\big)+\Ex_q\big[\Var_q\big(r_\HH(B,p)\big|\,|B|\big)\big],
\end{equation}
where $|B|$ denotes the size of the set $B$. The latter of the two terms can be controlled using Proposition~\ref{var} and Chernoff's inequality. The challenge will be the former term. To see, heuristically, why $\Var_q\big(\Ex_q\big[r_\HH(B,p)\big|\,|B|\big]\big)$ should be small, note that $|B|$ will roughly fluctuate by $\sqrt{qn}$ around its mean. This will influence the size of a $p$-subset $A$ of $B$ roughly by $p\sqrt{qn}$. However, $|A|$ will naturally vary by $\sqrt{pqn}$ which is much larger than $p\sqrt{qn}$ when $p$ is small. Hence, conditioning on the size of $B$ will not affect the size of $A$ much, and should imply that the former term in~\eqref{eq:condvar} is small.


We begin with the latter term in~\eqref{eq:condvar}. The first step is to prove the result corresponding to Proposition~\ref{var} for fixed size $k$ of $B$ and a randomly chosen subset thereof. Indeed, given a hypergraph $\HH$ on vertex set $[n]$, a subset $S \subset [n]$ of size $k$, and $p \in (0,1)$, observe that
$$r_\HH(S,p) \, = \, \sum_{m=0}^k \Pr\big( \xi_{k,p} = m \big) \tilde{X}_m(S),$$
where $\xi_{k,p} \sim \Bin(k,p)$, as in the previous section. The following proposition is an easy consequence of Proposition~\ref{var}.

\begin{prop}\label{prop2}
Let $p \in (0,1)$ and let $n,k \in \N$, with $n \ge 24(pk)^3$ and $n\ge 2pk^2$. Let $\HH$ be a hypergraph on vertex set $[n]$, and let $B_k \subset [n]$ be a uniformly chosen subset of size $k$. Then
$$\Var\big( r_\HH(B_k, p) \big) \, \le \, 96p \,+\, 4\exp\big( -pk/16 \big).$$
\end{prop}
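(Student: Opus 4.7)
The plan is to deduce the proposition from Proposition~\ref{var} by splitting the random variable according to the size of the random subset drawn from $B_k$. Specifically, writing
\[
r_\HH(B_k,p) \;=\; \sum_{m=0}^{k} c_m\, \tilde{X}_m(B_k), \qquad c_m := \Pr(\xi_{k,p}=m),
\]
I would isolate the ``typical'' range $T := \{m : m \le 2pk\}$, on which Proposition~\ref{var} gives a useful bound on $\Var(\tilde{X}_m(B_k))$, from the ``atypical'' range $T^c$, on which the coefficients $c_m$ decay exponentially by Chernoff. Define $Y_1 := \sum_{m \in T} c_m \tilde{X}_m(B_k)$ and $Y_2 := \sum_{m \notin T} c_m \tilde{X}_m(B_k)$, so that $\Var(r_\HH(B_k,p)) \le 2\Var(Y_1) + 2\Var(Y_2)$.

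For $Y_1$, I would first check that the hypotheses of Proposition~\ref{var} are satisfied for every $m \in T$: the assumption $n \ge 24(pk)^3$ gives $n \ge 3(2pk)^3 \ge 3m^3$, and $n \ge 2pk^2$ gives $n \ge pk^2 \ge km/2$. Hence $\Var(\tilde{X}_m(B_k)) \le 48m/k$ on $T$. Applying the Cauchy--Schwarz-type inequality for variance of a sum,
\[
\Var(Y_1) \;\le\; \left(\sum_{m \in T} c_m \sqrt{\Var(\tilde{X}_m(B_k))}\right)^{\!2} \;\le\; \frac{48}{k}\left(\sum_{m=0}^{k} c_m \sqrt{m}\right)^{\!2},
\]
and using Jensen's inequality $\Ex[\sqrt{\xi_{k,p}}] \le \sqrt{\Ex[\xi_{k,p}]} = \sqrt{pk}$, this yields $\Var(Y_1) \le 48p$.

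For $Y_2$, since each $\tilde{X}_m(B_k) \in [0,1]$, we have $0 \le Y_2 \le U := \Pr(\xi_{k,p} > 2pk)$ pointwise. Chernoff's inequality in the regime $a = pk > pk/2$ gives $U \le 2\exp(-pk/16)$, and since a random variable supported on $[0,U]$ has variance at most $U^2/4$, we get $\Var(Y_2) \le U^2/4 \le \exp(-pk/8)$. Combining,
\[
\Var(r_\HH(B_k,p)) \;\le\; 2\cdot 48p + 2\exp(-pk/8) \;\le\; 96p + 4\exp(-pk/16),
\]
where in the last step we use $2\exp(-pk/8) \le 4\exp(-pk/16)$ (equivalent to $\exp(-pk/16) \le 2$, always true).

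The only subtle point is the Cauchy--Schwarz step: one has to treat $\Var$ as a bilinear form and bound the off-diagonal covariances by $\sqrt{\Var(\tilde{X}_m)\Var(\tilde{X}_{m'})}$, which then factors as a perfect square. Everything else is a routine verification of hypotheses and a standard Chernoff application, so I do not expect any real obstacle.
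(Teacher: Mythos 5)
Your proof is correct and takes essentially the same route as the paper: both expand $r_\HH(B_k,p)=\sum_m \Pr(\xi_{k,p}=m)\tilde{X}_m(B_k)$, control the cross-covariances via Cauchy--Schwarz, apply Proposition~\ref{var} on the range $m\le 2pk$ and Chernoff's inequality on the tail. The only difference is bookkeeping: the paper bounds the full double sum directly using $\Var(\tilde{X}_m)\le 96p$ on the typical range, whereas you split into $Y_1+Y_2$ and recover the factor of $2$ lost in $\Var(Y_1+Y_2)\le 2\Var(Y_1)+2\Var(Y_2)$ by sharpening the typical part to $48p$ via Jensen; both give the stated constants.
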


\begin{proof}
The result follows from Proposition~\ref{var} and Chernoff's inequality, since if $m \le 2pk$ then the variance of $\tilde{X}_m(B_k)$ is at most $96p$, and the probability that $\xi_{k,p} > 2pk$ is at most $2e^{-pk/16}$.

To spell it out, note that if $p \le 1/2$ and $m \le 2pk$, then $m\le k$, $n \ge 3m^3$ and $n\ge km/2$, and so, by Proposition~\ref{var},
$$
\Var\big( \tilde{X}_{m}(B_k)\big) \,\le\, \frac{48m}{k} \, \le \, 96p.
$$
Since $\Var\big(\tilde{X}_m(B_k)\big) \le 1$, the same bound trivially holds for $ p > 1/2$.

Now since $ r_\HH(B_k, p) = \sum_{m=0}^k \Pr\big( \xi_{k,p} = m \big) \tilde{X}_m(B_k)$,
$$
\Var\big( r_\HH(B_k, p) \big) \; = \; \sum_{m_1,m_2} \Pr\big( \xi_{k,p} = m_1 \big) \Pr\big( \xi_{k,p} = m_2 \big) \Cov\big(\tilde{X}_{m_1}(B_k),\tilde{X}_{m_2}(B_k)\big).
$$
By Cauchy-Schwarz, we have $\Cov(X,Y) \le \sqrt{\Var(X)\Var(Y)}$, and thus
\begin{eqnarray*}
\Var\big( r_\HH(B_k, p)\big) & \le & \sum_{m_1,m_2} \Pr\big( \xi_{k,p} = m_1 \big) \Pr\big( \xi_{k,p} = m_2 \big) \sqrt{\Var\big(\tilde{X}_{m_1}(B_k)\big)\Var\big(\tilde{X}_{m_2}(B_k)\big)} \\
& \le & 96p \,+ \, 2 \cdot \Pr\big( \xi_{k,p} > 2pk \big) \; \le \; 96p \,+\, 4 \exp\big( -pk/16 \big),
\end{eqnarray*}
where the last step is by Chernoff's inequality, as required.
\end{proof}

In order to deduce Theorem~\ref{varB} from Proposition~\ref{prop2}, we shall use the following simple bounds on binomial random variables, which follow immediately from Chernoff's inequality.

\begin{obs}\label{bincalc}
Let $p\in(0,1/2]$, $q\in(0,1)$ and $n \in \N$, with 
$pqn \ge 32\log(1/p)$.
Then
\begin{itemize}
\item[$(a)$] $\Pr\Big( \big| \xi_{n,q}  - qn \big| \, > \, 2 \sqrt{qn \log(1/p) } \Big) \, \le \, 2p$.\\[-0ex]
\item[$(b)$] $\Pr\left( \xi_{n,q}  < \ds\frac{16}{p} \log \frac{1}{p}  \right) \, \le \, \Pr\Big( \xi_{n,q} < qn/2\Big) \, \le \, 2e^{-qn/16} \, \le \, 2p$.
\end{itemize}
\end{obs}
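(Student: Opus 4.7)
The plan is to derive both bounds by direct application of Chernoff's inequality, checking in each case that the hypothesis $pqn \ge 32\log(1/p)$ (combined with $p \le 1/2$) places the deviation parameter in the correct regime. A useful preliminary observation is that $p \le 1/2$ and $pqn \ge 32\log(1/p)$ together imply
\[
qn \;\ge\; \frac{32\log(1/p)}{p} \;\ge\; 64\log(1/p),
\]
which in particular gives $qn \ge 16\log(1/p)$; this crude inequality will feed into both parts.

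For part $(a)$, I would set $a = 2\sqrt{qn\log(1/p)}$. First I would check that $a \le qn/2$: squaring, this reduces to $16\log(1/p) \le qn$, which is covered by the preliminary observation. Hence the first branch of Chernoff's inequality applies, and
\[
\exp\!\left(-\frac{a^2}{4qn}\right) \;=\; \exp\!\left(-\frac{4qn\log(1/p)}{4qn}\right) \;=\; \exp\!\big(-\log(1/p)\big) \;=\; p,
\]
so the Chernoff bound yields $2p$, as required.

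For part $(b)$, the first inequality $\Pr(\xi_{n,q} < (16/p)\log(1/p)) \le \Pr(\xi_{n,q} < qn/2)$ is immediate once $(16/p)\log(1/p) \le qn/2$, which is precisely the hypothesis $pqn \ge 32\log(1/p)$. The middle inequality is then Chernoff's inequality applied with $a = qn/2$ (which lies on the boundary of the two regimes and gives $\exp(-(qn/2)^2/(4qn)) = \exp(-qn/16)$ either way). The final inequality $2e^{-qn/16} \le 2p$ reduces to $qn \ge 16\log(1/p)$, which is again the preliminary observation.

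There is no genuine obstacle here — both statements are essentially just the choice of an appropriate deviation parameter in Chernoff's inequality. The only mild care needed is to verify the regime condition $a \le qn/2$ in part $(a)$ and to track that the hypothesis $pqn \ge 32\log(1/p)$ is strong enough to absorb the factor $1/p$ appearing in $(16/p)\log(1/p)$ in part $(b)$; both verifications are handled by the single inequality $qn \ge 64\log(1/p)$ derived at the start.
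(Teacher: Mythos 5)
Your proof is correct and is exactly the argument the paper intends: the paper gives no proof of Observation~\ref{bincalc} beyond asserting that it "follows immediately from Chernoff's inequality," and your verification of the regime condition $a\le qn/2$ via $qn\ge 64\log(1/p)$ (using $p\le1/2$) is precisely the routine check being elided. Nothing further is needed.
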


We shall also need the following bound, relating nearby binomial coefficients. 

\begin{lemma}\label{Pcalc}
Let $p\in(0,1/4]$, $q\in(0,1)$ and $n \in \N$ be such that $pqn \ge 32\log(1/p)$.
If
$$qn \,-\, 2\sqrt{qn \log(1/p) } \; \le \; k \;\le\; k' \;\le\; qn \,+\, 2\sqrt{qn \log(1/p) },$$
and $\big| m - pqn \big| \le 4 \sqrt{pqn \log(1/p)}$, then
\[
\frac{\Pr\big( \xi_{k',p} = m \big) }{ \Pr\big( \xi_{k,p} = m \big)}=1 + O\bigg( \sqrt{p}\log \frac{1}{p} \bigg).
\]
\end{lemma}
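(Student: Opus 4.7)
The plan is to turn the ratio into a telescoping product and then Taylor-expand. Assume $k<k'$ (otherwise the ratio is $1$) and write $d:=k'-k$. Since $\Pr(\xi_{k,p}=m)=\binom{k}{m}p^m(1-p)^{k-m}$, cancellation gives
\[
\frac{\Pr(\xi_{k',p}=m)}{\Pr(\xi_{k,p}=m)} \;=\; \frac{\binom{k'}{m}}{\binom{k}{m}}(1-p)^{d} \;=\; \prod_{i=1}^{d} \frac{(k+i)(1-p)}{k+i-m} \;=\; \prod_{i=1}^{d}(1+\epsilon_i),
\]
where $\epsilon_i := \dfrac{m-p(k+i)}{k+i-m}$. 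So the task reduces to proving (a) that each $|\epsilon_i|$ is small, and (b) that $\sum_{i=1}^d \epsilon_i = O(\sqrt{p}\log(1/p))$; from these, the elementary inequality $|\log(1+x)|\le |x|+x^2$ applied termwise will give $\log(\text{ratio})=\sum_i\epsilon_i+O\big(\sum_i\epsilon_i^2\big)=O(\sqrt{p}\log(1/p))$.

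For the denominator, combining the hypotheses on $k$, $m$ with $pqn\ge 32\log(1/p)$ and $p\le 1/4$ gives $k-m \ge qn(1-p)-6\sqrt{qn\log(1/p)}\ge qn/2$, so $k+i-m\ge qn/2$ for every $i\in[d]$. For the numerator of $\epsilon_i$, the triangle inequality together with $|m-pqn|\le 4\sqrt{pqn\log(1/p)}$, $|k-qn|\le 2\sqrt{qn\log(1/p)}$ and $i\le d\le 4\sqrt{qn\log(1/p)}$ yields $|m-p(k+i)|\le 6\sqrt{pqn\log(1/p)}+4p\sqrt{qn\log(1/p)} = O(\sqrt{pqn\log(1/p)})$. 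Dividing, this gives (a): $|\epsilon_i|=O\big(\sqrt{p\log(1/p)/qn}\big)$ uniformly in $i$.

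For (b), I would split
\[
\sum_{i=1}^d \epsilon_i \;=\; (m-pk)\sum_{i=1}^d\frac{1}{k+i-m} \;-\; p\sum_{i=1}^d \frac{i}{k+i-m}.
\]
Using $k+i-m\ge qn/2$, the first sum is at most $2d/(qn)$, so the first term is bounded by $|m-pk|\cdot 2d/(qn)$. Since $|m-pk|\le |m-pqn|+p|k-qn| = O(\sqrt{pqn\log(1/p)})$ and $d\le 4\sqrt{qn\log(1/p)}$, this term is $O(\sqrt{p}\log(1/p))$, which is the dominant contribution. The second term is at most $pd^2\cdot 2/(qn) = O(p\log(1/p))$, which is smaller. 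Finally, $\sum_i \epsilon_i^2 \le d\cdot O(p\log(1/p)/qn) = O(p(\log(1/p))^{3/2}/\sqrt{qn})=O(p\log(1/p))$ using $qn\ge 128\log(1/p)$. Combining, $\log(\text{ratio})=O(\sqrt{p}\log(1/p))$, hence (since this is small) the ratio itself is $1+O(\sqrt{p}\log(1/p))$.

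The only genuine accounting issue is making sure no factor is dropped when estimating $|m-p(k+i)|$: the $|m-pqn|$ bound carries a $\sqrt{p}$ that pairs precisely with the $\sqrt{qn\log(1/p)}$ from $d$ to produce $\sqrt{p}\log(1/p)$. This is the sharpest step of the argument; everything else is either a trivial denominator bound or the termwise Taylor expansion $\log(1+\epsilon_i)=\epsilon_i+O(\epsilon_i^2)$.
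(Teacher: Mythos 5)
Your proof is correct and follows essentially the same route as the paper: both write the ratio as a telescoping product of one-step ratios $(1-p)\ell/(\ell-m)=1+\epsilon$, bound each factor by $1+O\big(\sqrt{p\log(1/p)}/\sqrt{qn}\big)$, and multiply over the at most $4\sqrt{qn\log(1/p)}$ steps (your two-sided bound on $\sum_i\epsilon_i$ even handles the lower bound, which the paper only asserts is "the same"). The one arithmetic quibble is that $k+i-m\ge qn/2$ would need $qn\ge 576\log(1/p)$ whereas the hypothesis only guarantees $qn\ge 128\log(1/p)$; however $k+i-m\ge \tfrac34\big(1-\tfrac{1}{\sqrt2}\big)qn$ always holds under your estimates, which is all the $O(\cdot)$ bounds require.
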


Since this lemma follows from a straightforward calculation, we shall only sketch the proof of the upper bound.

\begin{proof}[Sketch of proof]
We claim that, for each $k \le \ell \le k'$,
$$\frac{\Pr\big( \xi_{\ell,p} = m \big)}{\Pr\big( \xi_{\ell-1,p} = m \big)} \; = \; \frac{(1-p)\ell}{\ell - m} \; \le \; 1 + O\left( \frac{\sqrt{p \log(1/p)}}{\sqrt{qn}} \right).$$
Indeed, note that $m \le p\ell + 6\sqrt{pqn \log \frac{1}{p}}$, and so
$$\frac{(1-p)\ell}{\ell - m} \; \le \; \left( 1 - \frac{8\sqrt{pqn \log(1/p)}}{\ell} \right)^{-1} \; \le \; 1 + O\left( \frac{\sqrt{p \log(1/p)}}{\sqrt{qn}} \right).$$
Now we simply take the product over $\ell$ and note that this range is at most of size
$4\sqrt{qn \log(1/p) }$, to obtain
$$\frac{\Pr\big( \xi_{k',p} = m \big)}{\Pr\big( \xi_{k,p} = m \big)} \; \le \; \exp\bigg[ O\bigg( \sqrt{p} \log \frac{1}{p} \bigg) \bigg] \; = \; 1 + O\bigg( \sqrt{p} \log \frac{1}{p} \bigg),$$
as required. The proof of the lower bound is the same.
\end{proof}

We are now ready to prove Theorem~\ref{varB}.

\begin{proof}[Proof of Theorem~\ref{varB}]
Note that the result is trivial for any $p\in(\frac{1}{4},\frac{1}{2}]$, since the variance is at most 1. Let $0 < p\le 1/4$, $0<q < 1$ and $n \in \N$, and suppose that $n \ge 200(pqn)^3$, $n\ge 8p(qn)^2$ and $pqn \ge 32\log(1/p)$.  Observe that therefore Observation~\ref{bincalc} and Lemma~\ref{Pcalc} hold, and that Proposition~\ref{prop2} holds for every $k \le 2qn$.

For each $k \in [n]$, let $\alpha_k := \Ex\big[ r_\HH(B_k,p) \big]$, where $B_k \subset [n]$ is a uniformly chosen set of size~$k$. Let $K$ and $K'$ be independent random variables, each with distribution $\Bin(n,q)$.

\medskip
\noindent \textbf{Claim 1}: $\Var_q\big(  r_\HH(B,p) \big) \, = \, \ds\frac{1}{2} \Ex\Big[ \big(\alpha_{K}-\alpha_{K'}\big)^2 \Big] \,+\, O(p)$.

\begin{proof}[Proof of Claim 1]
Applying the conditional variance formula we obtain as in~\eqref{eq:condvar}
$$
\Var_q\big(  r_\HH(B,p) \big) \, = \, \Var\Big( \Ex\big[ r_\HH(B,p) \,\big|\, |B| \big] \Big) \,+\, \sum_{k=0}^n \Var\Big( r_\HH(B,p) \,\big|\, |B| = k \Big) \Pr\big( \xi_{n,q} = k \big).
$$
Note that in the first term, the variance is over the choice of $\ell := |B|$, and the expectation over the (uniform) choice of a set $B$ of size $\ell$. In the sum, the variance is over the uniform choice of a $k$-set $B$.

Now, $\Ex\big[ r_\HH(B,p) \,\big|\, |B| = k \big] = \alpha_k$, so the first term may be re-written as
$$\Var\big( \alpha_K \big) \; = \; \ds\frac{1}{2} \Ex\Big[ \big(\alpha_{K}-\alpha_{K'}\big)^2 \Big].$$
For the sum, first note that $\Pr\big( \xi_{n,q} < \frac{16}{p} \log \frac{1}{p} \big) \le 2p$, by Observation~\ref{bincalc}. On the other hand, $\Pr(\xi_{n,q}> 2qn)\le 2\exp(-qn/16)\le 2p$, by Chernoff. For $k\le 2qn$ we have $24(pk)^3\le 192(pqn)^3\le n$ and $2pk^2\le 8p(qn)^2\le n$, so if $\frac{16}{p} \log \frac{1}{p}\le k \le 2qn$, then, by Proposition~\ref{prop2}, we have
$$
\Var\Big( r_\HH(B_k,p) \Big) \,=\, O(p),
$$
and the claim follows.
\end{proof}

By Observation~\ref{bincalc}, the probability that
\begin{equation}\label{KK'}
qn - 2\sqrt{qn \log\frac{1}{p} } \, \le \, K,K' \, \le \, qn + 2 \sqrt{qn \log\frac{1}{p} }
\end{equation}
is at least $1 - 4p$. Hence, it will suffice to prove the following claim.

\bigskip
\noindent \textbf{Claim 2}: If $qn - 2\sqrt{qn \log\frac{1}{p} } \le k \le k' \le qn + 2\sqrt{qn \log\frac{1}{p} }$, then $$\big| \alpha_{k'} - \alpha_k \big| \, = \, O\bigg( \sqrt{p} \log \frac{1}{p} \bigg).$$

\begin{proof}[Proof of Claim 2]
Note that
$$
\alpha_k \; = \; \Ex\big[ r_\HH(B_k,p)\big] \; = \; \sum_{m=0}^k \Pr\big( \xi_{k,p} = m \big) \Ex\big[ \tilde{X}_m(B_k) \big] \; = \; \sum_{m=0}^k \Pr\big( \xi_{k,p} = m \big) \beta_m,
$$
and set $S = \big\{m : |m - pqn| \le 4\sqrt{pqn \log(1/p)} \big\}$. We assume for simplicity that $\alpha_{k'} \ge \alpha_k$; the other case is the same. We have
\begin{eqnarray*}
\alpha_{k'} -\alpha_k & = & \sum_{m=0}^{k'} \Pr\big( \xi_{k',p} = m \big) \beta_m - \sum_{m=0}^k \Pr\big(\xi_{k,p} = m\big) \beta_m\\
& \le & \sum_{m \in S} \left| \Pr\big( \xi_{k',p} = m \big) - \Pr\big(\xi_{k,p} = m\big) \right| \,+\, O(p),
\end{eqnarray*}
since, by Chernoff's inequality and the triangle inequality, $\Pr\big( \xi_{k,p} \not\in S \big)+\Pr\big( \xi_{k',p} \not\in S \big) = O(p)$. 
Moreover,  by Lemma~\ref{Pcalc}, we have
$$\big| \Pr\big( \xi_{k',p} = m \big) - \Pr\big(\xi_{k,p} = m\big) \big| \; = \; O\left( \sqrt{p} \log\frac{1}{p} \right) \cdot \Pr\big(\xi_{k,p} = m\big)$$
for every $m \in S$. We conclude that
$$\big| \alpha_{k'} -\alpha_k \big| \; = \; O\left( \sqrt{p} \log\frac{1}{p} \right),$$
as required.
\end{proof}

It is now easy to deduce Theorem~\ref{varB} from Claims~1 and~2, and~\eqref{KK'}. Indeed,
\begin{eqnarray*}
\Var_q\big(  r_\HH(B,p) \big) & = & \ds\frac{1}{2} \Ex\Big[ \big(\alpha_{K}-\alpha_{K'}\big)^2 \Big] \,+\, O(p)\\
& \le & \frac{1}{2}\cdot2 \cdot \Pr\Big( \big| \xi_{n,q} - qn \big| > 2 \sqrt{qn \log(1/p) } \Big) \,+\, O\left( p \bigg( \log \frac{1}{p} \right)^2 \bigg)\\
& = & O\bigg( p \left( \log \frac{1}{p} \right)^2 \bigg),
\end{eqnarray*}
as required.
\end{proof}

We end the section by noting a couple of easy consequences of Theorem~\ref{varB}. The first is a generalization, to an arbitrary event on $\{0,1\}^n$, of the method we shall use to deduce Proposition~\ref{Bprop} from Theorem~\ref{nontriv}. It follows almost immediately from Theorem~\ref{varB}, via Chebychev's inequality.
First, note that a $p$-subset of a $q$-subset of $[n]$ is a $pq$-subset of $[n]$. In particular,
\begin{equation}\label{expect}
\Ex_q \big[ r_\HH(B,p) \big] \, =\,  \Ex_q\big[\Pr_p^B(A\in\HH)\big]\,=\, \Pr_{pq}(A \in \HH).
\end{equation}

\begin{cor}\label{gencor}
For every $\eps > 0$ there exists $p_0=p_0(\eps) > 0$ such that if $p\in(0,p_0)$, $r\in(0,p)$, and $n\in\N$ satisfy $n \ge 200(rn)^3$, $n\ge 8(rn)^2/p$ and $rn \ge 32 \log\frac{1}{p}$, then
\[
\Pr_{r/p} \Big( \Big|\Pr_p^B(A\in\HH) - \Pr_r(A\in\HH)\Big|>\eps\Big) \, < \, \eps,
\]
for every event (or hypergraph) $\HH \subset \{0,1\}^n$.
\end{cor}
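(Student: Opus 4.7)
The plan is to apply Theorem~\ref{varB} with $q = r/p$, then invoke Chebyshev's inequality. First I would verify that the hypotheses of Theorem~\ref{varB} are satisfied under this choice. With $q = r/p \in (0,1)$ (since $r<p$), we have $pq = r$, so the conditions $n\ge 200(pqn)^3$, $n \ge 8p(qn)^2$ and $pqn \ge 32\log(1/p)$ become exactly $n\ge 200(rn)^3$, $n\ge 8(rn)^2/p$, and $rn\ge 32\log(1/p)$, which are precisely the three assumptions in the statement. Shrinking $p_0$ if necessary, we may also assume $p\le 1/2$, so all hypotheses of Theorem~\ref{varB} hold.

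Next I would compute the expectation. Since a $p$-subset of an $(r/p)$-subset of $[n]$ has the same distribution as an $r$-subset of $[n]$, equation~\eqref{expect} gives
\[
\Ex_{r/p}\big[ r_\HH(B,p) \big] \, = \, \Pr_r\big(A\in\HH\big).
\]
Recalling that $r_\HH(B,p) = \Pr_p^B(A\in\HH)$, the quantity whose tail we need to control is precisely the deviation of $r_\HH(B,p)$ from its mean under $\Pr_{r/p}$.

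Finally I would apply Chebyshev's inequality, using the bound on the variance supplied by Theorem~\ref{varB}:
\[
\Pr_{r/p}\Big( \big| r_\HH(B,p) - \Pr_r(A\in\HH) \big| > \eps \Big) \;\le\; \frac{ \Var_{r/p}\big( r_\HH(B,p) \big) }{\eps^2} \;=\; O\!\left( \frac{ p(\log(1/p))^2 }{ \eps^2 } \right),
\]
with the implicit constant independent of $\HH$. Since $p(\log(1/p))^2\to 0$ as $p\to 0$, we may choose $p_0=p_0(\eps)>0$ small enough that the right-hand side is smaller than $\eps$ for all $p\in(0,p_0)$, uniformly in $\HH$, which yields the desired conclusion.

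There is essentially no substantive obstacle here: the only work is the bookkeeping of matching up the parameters $(p,q)$ in Theorem~\ref{varB} with the parameters $(p,r)$ of the corollary, and then a single application of Chebyshev. The key input that makes everything go through — and which is genuinely nontrivial — is the uniform-in-$\HH$ variance bound of Theorem~\ref{varB}; once that is in hand the deduction is routine.
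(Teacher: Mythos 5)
Your proposal is correct and follows essentially the same route as the paper: apply Theorem~\ref{varB} with $q=r/p$ (so that $pq=r$ and the three hypotheses match exactly), use~\eqref{expect} to identify $\Ex_{r/p}[r_\HH(B,p)]=\Pr_r(A\in\HH)$, and conclude by Chebyshev's inequality after choosing $p_0$ small enough that the uniform-in-$\HH$ variance bound is below $\eps^3$.
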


\begin{proof}
Given $\eps>0$, choose $p_0>0$ in accordance to Theorem~\ref{varB} such that
$$\Var_{r/p} \big( r_\HH(B,p) \big) \, < \, \eps^3$$
for all $p< p_0$, and for all $\HH$.
By Chebychev's inequality and \eqref{expect},
$$\Pr_{r/p} \Big( \big|r_\HH(B,p)- \Pr_r(A\in\HH)\big| > \eps \Big)  \; \le \; \frac{ \Var_{r/p} \big( r_\HH(B,p) \big) }{ \eps^2 } \; < \; \eps,$$
as required.
\end{proof}

Finally, we state the following extremal result of hypergraphs, which may be of independent interest, and follows easily from Proposition~\ref{var}. Say that a hypergraph $\HH$ is \emph{$\delta$-quasi monotone} with respect to $(k,m)$ if
$$\left| \big\{ e \in \HH_m :  e \subset B \big\} \right| \,\ge\, \big( 1 - \delta \big) {k \choose m}$$
for every $B \in \HH_k$.

\begin{cor}\label{hyper}
For each $\delta > 0$, there exists $C > 0$ such that the following holds for every $k,m \in \N$, and every $n \ge \max\{3m^3, km/2\}$. If $\HH$ is a $\delta$-quasi monotone hypergraph with respect to $(k,m)$, then either
$$|\HH_m| \,\ge\, \big( 1 - 2\delta \big){n \choose m} \qquad \textup{or} \qquad |\HH_k| \, \le \, \frac{Cm}{k} {n \choose k}.$$
\end{cor}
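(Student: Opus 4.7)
The plan is to apply Proposition~\ref{var} directly, via Chebyshev's inequality, to the random variable $\tilde{X}_m(B_k)$, where $B_k$ is a uniformly chosen $k$-subset of $[n]$.

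Observe that $\Ex[\tilde{X}_m(B_k)] = \beta_m := |\HH_m|/{n \choose m}$, and that Proposition~\ref{var} (which applies since $n \ge \max\{3m^3, km/2\}$) gives
\[
\Var(\tilde{X}_m(B_k)) \; \le \; \frac{48m}{k}.
\]
On the other hand, the $\delta$-quasi monotonicity hypothesis says exactly that whenever $B_k \in \HH_k$ we have $\tilde{X}_m(B_k) \ge 1-\delta$. Therefore
\[
\frac{|\HH_k|}{{n \choose k}} \; = \; \Pr\big( B_k \in \HH_k \big) \; \le \; \Pr\big( \tilde{X}_m(B_k) \ge 1-\delta \big).
\]

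Now assume we are \emph{not} in the first alternative of the corollary, so $\beta_m < 1-2\delta$. Then the event $\{\tilde{X}_m(B_k) \ge 1-\delta\}$ forces $|\tilde{X}_m(B_k) - \beta_m| > \delta$, and Chebyshev's inequality together with the variance bound above yields
\[
\Pr\big( \tilde{X}_m(B_k) \ge 1-\delta \big) \; \le \; \frac{\Var(\tilde{X}_m(B_k))}{\delta^2} \; \le \; \frac{48m}{k\delta^2}.
\]
Combining these two displays gives $|\HH_k| \le (48/\delta^2) \cdot (m/k) \cdot {n \choose k}$, so we may take $C = 48/\delta^2$.

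There is no substantive obstacle here: the only thing to verify is that the hypothesis $n \ge \max\{3m^3, km/2\}$ is exactly what is needed to invoke Proposition~\ref{var} (note that the case $k < m$ is vacuous since then $\HH_k$ contains no set that can host an $m$-edge, so $\HH_k = \emptyset$ unless the quasi-monotonicity condition is interpreted trivially, and the conclusion holds automatically). The entire argument is a one-line application of Chebyshev once Proposition~\ref{var} is in hand.
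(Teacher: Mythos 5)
Your proof is correct and is essentially the paper's own argument: the paper's proof of this corollary is precisely the one-line observation that $\Ex[X_m(B_k)]=\beta_m\binom{k}{m}$ followed by Chebyshev's inequality with the variance bound from Proposition~\ref{var}, yielding $C=O(1/\delta^2)$. Your write-up just spells out the same steps (including the harmless edge case $k<m$, where the second alternative holds trivially since $Cm/k>1$).
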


\begin{proof}
Note that $\Ex\big[ X_m(B_k) \big] = \beta_m {k \choose m}$, and apply Chebychev, using Proposition~\ref{var} to bound the variance. The theorem follows with $C = O(1/\delta^2)$.
\end{proof}

\section{Proof of Theorem~\ref{noise}}\label{T1sec}

In this section we shall put together the pieces, and prove Theorem~\ref{noise}. We shall first deduce Propositions~\ref{varBp} and~\ref{Bprop} from Theorem~\ref{varB}; then we shall use the deterministic algorithm method to prove Theorem~\ref{BpNS}; finally we shall deduce Theorem~\ref{noise}.

Throughout this section, $B$ will be chosen according to $\PB_{\lambda_c/p}$, and $\eta$ will be chosen according to the conditional measure $\Pr^B_p$.

\subsection{Variance bound -- Proof of Propositions~\ref{varBp} and~\ref{Bprop}}

We shall prove the following slight generalization of Proposition~\ref{varBp}, which follows easily from Theorem~\ref{varB}, together with an easy discretization argument.

\begin{prop}\label{strongvarBp}
$$\lim_{p \to 0}\, \limsup_{a,b \to \infty} \, \VarB_{\lambda_c/p} \Big( \P^B_p\big(H ( \eta, R_{a \times b}, \bullet)\big) \Big)\, =\, 0.$$
\end{prop}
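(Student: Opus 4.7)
The plan is to deduce the proposition from Theorem~\ref{varB} via a discretization argument. Set $R := R_{a\times b}$ and $R' := R_{(a+2)\times (b+2)}$; only Poisson points in $R'$ affect whether $D(\eta) \cap R$ contains a horizontal crossing of $R$. For $\delta > 0$, I partition $R'$ into $n := (a+2)(b+2)/\delta^2$ axis-aligned cells $C_1, \ldots, C_n$ of side $\delta$ with centres $z_1, \ldots, z_n$, and define the hypergraph
\[
\HH_\delta \, := \, \big\{ A \subseteq [n] : D(\{z_i : i \in A\}) \textup{ contains a horizontal crossing of } R \big\}.
\]

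Set $q := 1 - e^{-\lambda_c \delta^2/p}$ and couple a Poisson process $B$ of intensity $\lambda_c/p$ on $R'$ to the random set $\tilde B := \{ i : C_i \cap B \neq \emptyset\} \subseteq [n]$, which is marginally a $q$-subset of $[n]$. The key observation is that $pqn \to \lambda_c(a+2)(b+2)$ as $\delta \to 0$, so $pqn$ is essentially independent of $\delta$. Taking first $a,b$ large enough (depending only on $p$) so that $pqn \geq 32\log(1/p)$, and then $\delta$ small enough (given $a,b,p$) so that $n \geq 200(pqn)^3$ and $n \geq 8p(qn)^2$ also hold, Theorem~\ref{varB} yields
\[
\Var_q\big(r_{\HH_\delta}(\tilde B, p)\big) \,=\, O\big(p(\log(1/p))^2\big),
\]
with implicit constant independent of $\delta$, $a$, $b$.

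Next, I take $\delta \to 0$ under the coupling. Almost surely, for all sufficiently small $\delta$ every cell contains at most one point of $B$, yielding a canonical bijection between $\tilde B$ and $B$ that moves each point by at most $\delta/\sqrt{2}$. Under this bijection, $r_{\HH_\delta}(\tilde B, p)$ and $\P^B_p(H(\eta, R, \bullet))$ are horizontal crossing probabilities under the same $p$-thinning of $B$, but with discs centred at cell centres versus at the true Poisson positions. For Lebesgue-a.e.\ finite configuration in $R'$ the crossing event is stable under small uniform perturbations of the point set, so bounded convergence gives $r_{\HH_\delta}(\tilde B, p) \to \P^B_p(H(\eta, R, \bullet))$ in $L^2$ as $\delta \to 0$. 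Variances therefore converge, and we obtain
\[
\VarB_{\lambda_c/p}\big(\P^B_p(H(\eta, R, \bullet))\big) \,=\, O\big(p(\log(1/p))^2\big)
\]
uniformly for $a,b$ sufficiently large. The proposition then follows on taking $\limsup_{a,b \to \infty}$ and then $p \to 0$, since $p(\log(1/p))^2 \to 0$.

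The main obstacle will be establishing the a.e.\ continuity needed for the limit passage. Configurations whose crossing indicator is unstable under small perturbations correspond to pairs of unit discs becoming tangent, a unit disc becoming tangent to a side of $R$, or certain triple-disc intersections meeting a side of $R$; each such condition defines a proper algebraic subvariety of $(R')^N$, so their union has Lebesgue measure zero and is a null set under the conditional distribution of $B$ given $|B|=N$. Everything else is a routine check of the quantitative conditions in Theorem~\ref{varB}.
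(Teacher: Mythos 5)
Your proposal is correct and follows essentially the same route as the paper: the same $\delta$-discretization of $R_{(a+2)\times(b+2)}$, the same choice of $q=1-e^{-\lambda_c\delta^2/p}$, and the same application of Theorem~\ref{varB} to the crossing hypergraph, with the hypotheses verified in the same order ($a,b$ large given $p$, then $\delta$ small given $a,b,p$). The only difference is the transfer back to the continuum: the paper bounds the probability of an explicit bad event (two points in one cell, near-tangencies, points near $\partial R$) by $\sqrt{\delta}$, whereas you send $\delta\to0$ and invoke a.e.\ stability of the crossing indicator together with dominated convergence --- both are valid.
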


In order to apply Theorem~\ref{varB} we need to construct a discrete probability space which closely approximates the continuous space with measure $\PB_{\lambda_c/p}$ on the rectangle $R_{a \times b}$. In order to do so, for each $\delta > 0$ consider the lattice
$$\Lambda \,=\, \Lambda_{a,b}^{\delta} \,:=\, R_{(a+2) \times (b+2)} \cap \delta \ZZ^2,$$
and set $n = |\Lambda_{a,b}^{\delta}|$, the number of vertices of $\delta \ZZ^2$ in the rectangle $R_{(a+2) \times (b+2)}$. (Note that we consider the rectangle $R_{(a+2) \times (b+2)}$ because it contains all the points which can affect the event $H(\eta, R_{a \times b}, \bullet)$.) Let $p > 0$, set $q = q(n)$ to be
\begin{equation}\label{qdef}
q \, := \, 1 - e^{-\lambda_c \delta^2 / p},
\end{equation}
and note that $pqn \, \approx \, \lambda_c ab$. In this subsection, since we shall be dealing with both continuous and discrete probability spaces, we shall write $\hat{B} \subset \Lambda$ to denote a $q$-subset of $\Lambda$ and $\hat{\eta} \subset \hat{B}$ to denote a $p$-subset of $\hat{B}$. Recall that $\Pr^\Lambda_q$ and $\Pr^{\hat{B}}_p$ denote the corresponding probability measures.

The following lemma is an immediate consequence of Theorem~\ref{varB}.

\begin{lemma}\label{gridvar}
For every $p\in\big(0,\frac{1}{2}\big]$, if $a=a(p), b=b(p) \ge 1$ are sufficiently large and $\delta = \delta(a,b) > 0$ is sufficiently small, then the following holds. Let $\Lambda = \Lambda_{a,b}^\delta$, $n = |\Lambda|$ and $q > 0$ be as defined in~\eqref{qdef}. Then
$$\Var^\Lambda_q\Big( \P^{\hat{B}}_p\big(H ( \hat{\eta}, R_{a\times b}, \bullet)\big)\Big) \, = \, O\bigg( p \left( \log \frac{1}{p} \right)^2 \bigg).$$
\end{lemma}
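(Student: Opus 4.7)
The plan is to verify that Lemma~\ref{gridvar} is a direct instance of Theorem~\ref{varB}, once the horizontal crossing event is re-encoded as a suitable hypergraph on the vertex set $\Lambda$. First I would identify $[n]$ with the lattice points of $\Lambda = \Lambda_{a,b}^\delta$ and define the hypergraph
$$\HH \, := \, \big\{\, e \subset \Lambda \,:\, H(e, R_{a\times b}, \bullet) \text{ occurs} \,\big\},$$
i.e., $e \in \HH$ precisely when the occupied space $D(e)$ contains a horizontal crossing of $R_{a\times b}$. Under this encoding, the conditional crossing probability $\P^{\hat{B}}_p(H(\hat{\eta}, R_{a\times b}, \bullet))$ coincides exactly with $r_{\HH}(\hat{B},p) = \Pr^{\hat{B}}_p(A \in \HH)$ for a $p$-subset $A$ of $\hat{B}$, because passing from $\hat{\eta}$ to its underlying point set preserves the event.

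The remaining task is a parameter check. From the definitions, $n = |\Lambda_{a,b}^\delta|$ satisfies $n = (1+o(1))\,ab/\delta^2$ as $\delta \to 0$, while $q = 1 - e^{-\lambda_c\delta^2/p}$ gives $q = (1+o(1))\,\lambda_c \delta^2/p$ for $\delta^2/p$ small. Consequently
$$pqn \, = \, (1+o(1))\,\lambda_c ab, \qquad qn \, = \, (1+o(1))\,\lambda_c ab / p.$$
So the three hypotheses of Theorem~\ref{varB} become, up to constants: $\lambda_c ab \gtrsim \log(1/p)$ (ensured by taking $a,b$ large once $p$ is fixed); $ab/\delta^2 \gtrsim (\lambda_c ab)^3$ (i.e.\ $\delta^2 \lesssim 1/(\lambda_c^3 (ab)^2)$); and $ab/\delta^2 \gtrsim \lambda_c^2 (ab)^2/p$ (i.e.\ $\delta^2 \lesssim p/(\lambda_c^2 ab)$). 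Choosing first $a=a(p),b=b(p)$ large enough to satisfy the first inequality, and then $\delta=\delta(a,b,p)$ small enough to satisfy the latter two, meets all hypotheses.

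With the parameters under control, Theorem~\ref{varB} applied to $\HH$ yields
$$\Var^{\Lambda}_q\big(r_{\HH}(\hat{B},p)\big) \, = \, O\!\left(p\left(\log\tfrac{1}{p}\right)^{\!2}\right),$$
which is exactly the bound claimed, since the left-hand side equals $\Var^{\Lambda}_q\big(\P^{\hat{B}}_p(H(\hat{\eta}, R_{a\times b}, \bullet))\big)$ by the hypergraph identification above.

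There is essentially no obstacle here beyond the bookkeeping: the content of the lemma is entirely contained in Theorem~\ref{varB}, and the only thing to verify is that the two-stage sampling of $\hat{B} \subset \Lambda$ followed by a $p$-thinning $\hat{\eta} \subset \hat{B}$ is a faithful discrete proxy for the Poisson sampling with intensity $\lambda_c/p$ followed by a $p$-thinning, at the level of the indicator of $H(\cdot, R_{a\times b}, \bullet)$. The precise relationship between the discrete and continuous spaces (i.e., that the continuous variance of Proposition~\ref{strongvarBp} can be deduced from the discrete one by letting $\delta \to 0$) is the job of the surrounding discretization argument, not of this lemma.
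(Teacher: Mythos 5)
Your proposal is correct and follows exactly the paper's argument: encode the crossing event as a hypergraph on $\Lambda$, observe $pqn = \Theta(ab)$ and $n/ab \to \infty$ as $\delta \to 0$, and apply Theorem~\ref{varB}. Your parameter check is in fact more explicit than the paper's one-line version, and it is accurate.
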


\begin{proof}
We apply Theorem~\ref{varB} to the hypergraph $\HH$ which encodes crossings of the rectangle $R_{a \times b}$. That is, we identify the vertices of $\Lambda = \Lambda_{a,b}^{\delta}$ with the elements of $[n]$, and set
$$\hat{\eta} \in \HH \quad \Leftrightarrow \quad H\big( \hat{\eta}, R_{a \times b}, \bullet \big) \textup{ occurs.}$$
It only remains to check that the conditions of Theorem~\ref{varB} are satisfied if $\delta$ is sufficiently small. To see this, simply note that, by our choice of $q$, we have that $pqn = \Theta(ab)$ and $n/ab \to \infty$ as $\delta \to 0$.
\end{proof}

In order to deduce Proposition~\ref{strongvarBp}, we need to provide a coupling between our two probability spaces -- one discrete, the other continuous -- which approximately maps the crossing event $H(\eta,R_{a \times b},\bullet)$ onto $H(\hat{\eta},R_{a \times b},\bullet)$. In fact this is easy: simply map each point of $B$ into the $\ell_\infty$-nearest point of $\Lambda = \Lambda_{a,b}^\delta$, and take $\delta = \delta(a,b)$ sufficiently small.


To spell it out, cover $R_{(a+2) \times (b+2)}$ with disjoint $\delta \times \delta$ squares, centred on elements of $\Lambda^\delta_{a,b}$, and let $\psi$ map points of $B$ to the centre of the square in which they lie. Given a finite subset $B \subset R_{(a+2) \times (b+2)}$, let
$$\hat{B} \; := \; \big\{ y \in \Lambda^\delta_{a,b} \,:\, \psi(x) = y \textup{ for some } x \in B  \big\},$$
and observe that if $B \subset R_{(a+2) \times (b+2)}$ is chosen according to $\PB_{\lambda_c/p}$, then $\hat{B}$ is a $q$-subset of $\Lambda^\delta_{a,b}$, where $q = 1 - e^{-\lambda_c \delta^2 / p}$, as before.

We define a bad event $E^\delta_{a,b}$ on subsets $B \subset \RR^2$, by saying that $E$ occurs if either of the following holds:
\begin{itemize}
\item[$(a)$] Two points of $B$ lie in the same $\delta \times \delta$ square, i.e., $|\psi^{-1}(y) \cap B| > 1$ for some $y \in \Lambda_{a,b}^\delta$.
\item[$(b)$] There exist $x,y \in B$ with $2 - 2\delta \le \| x - y \|_2 \le 2 + 2\delta$.
\item[$(c)$] There exist $x\in B$ such that $1-\delta \le \|x-\partial R_{a \times b}\| \le 1+\delta$,
\end{itemize}
where $\partial R_{a \times b}$ denotes the boundary of $R_{a \times b}$ and the distance between a point
and a set is defined in the canonical way.
Observe that if $E^\delta_{a,b}$ does not occur, then the graphs naturally induced by the points of $B$ and $\hat{B}$ are identical, and the vertices are in 1-1 correspondence. Hence, conditional on $\left\{E^\delta_{a,b}\right\}^c,$ the events $H(\eta,R_{a \times b},\bullet)$ and $H(\hat{\eta},R_{a \times b},\bullet)$ have the same probability, in $\Pr^B_p$ and $\Pr^{\hat{B}}_p$ respectively.

The following lemma shows that, if $\delta$ is sufficiently small, then the coupling $(B,\hat{B})$ has the desired properties.

\begin{lemma}\label{lem:disc}
For every $a,b \ge 1$ and $p > 0$, there is $\delta_0 = \delta_0(a,b,p) > 0$ such that for all $\delta\in(0,\delta_0)$
$$\PB_{\lambda_c/p} \Big( \P^B_p\big(H ( \eta, R_{a \times b}, \bullet) \ne \P^{\hat{B}}_p\big( H ( \hat{\eta}, R_{a \times b}, \bullet ) \big) \Big) \, \le \, \PB_{\lambda_c/p} \big( E^\delta_{a,b} \big) \; \le \; \sqrt{\delta}.$$
\end{lemma}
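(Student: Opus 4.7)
\textbf{Proof plan for Lemma~\ref{lem:disc}.}
The first inequality is immediate from the observation recorded just before the lemma: if $E^\delta_{a,b}$ fails to occur then the two-step construction $B \mapsto \hat B$ is a bijection that preserves both the point pattern and the adjacency structure (two points are within distance $2$ in $B$ iff their images are within distance $2$ in $\hat B$), so the occupied crossing events in $D(\eta) \cap R_{a\times b}$ and $D(\hat\eta) \cap R_{a \times b}$ agree on the whole probability space obtained by coupling $\eta$ to $\hat\eta$ via the identification $x \leftrightarrow \psi(x)$. Consequently the two conditional crossing probabilities $\Pr^B_p$ and $\Pr^{\hat B}_p$ also coincide on $\{E^\delta_{a,b}\}^c$. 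It therefore remains to prove the second inequality, $\PB_{\lambda_c/p}(E^\delta_{a,b}) \le \sqrt{\delta}$ for $\delta$ small (depending on $a,b,p$), which I propose to do by handling each of the three constituent events $(a)$, $(b)$, $(c)$ separately by Markov's inequality on an easy first-moment computation for the Poisson point process of intensity $\lambda_c/p$ on $R_{(a+2)\times(b+2)}$.

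For event $(a)$, partition $R_{(a+2)\times(b+2)}$ into $O((a+2)(b+2)/\delta^2)$ squares of side $\delta$; the probability that a given square contains at least two Poisson points is $O((\lambda_c \delta^2/p)^2)$, so a union bound yields a total probability $O\big((a+2)(b+2)(\lambda_c/p)^2 \delta^2\big) = O(\delta^2)$. For event $(b)$, the expected number of ordered pairs $(x,y) \in B^2$ with $2-2\delta \le \|x-y\|_2 \le 2+2\delta$ equals
\[
\Big(\frac{\lambda_c}{p}\Big)^{\!2}\!\int_{R_{(a+2)\times(b+2)}}\!\!\int_{R_{(a+2)\times(b+2)}} \mathbf{1}\big[2-2\delta \le \|x-y\|_2 \le 2+2\delta\big]\,dx\,dy,
\]
and the inner integral is bounded by the area $\pi\big((2+2\delta)^2-(2-2\delta)^2\big) = 16\pi\delta$ of an annulus of width $4\delta$; Markov's inequality then gives $O((a+2)(b+2)\delta)$ for the probability. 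For event $(c)$, the set of points within distance $\delta$ of the $1$-neighbourhood of $\partial R_{a\times b}$ has area $O((a+b)\delta)$, so the expected number of Poisson points in this region is $O((a+b)\lambda_c\delta/p)$ and Markov gives the same order.

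Summing the three bounds, we obtain $\PB_{\lambda_c/p}(E^\delta_{a,b}) \le C(a,b,p)\,\delta$ for an explicit constant $C(a,b,p)$, and the claim $\PB_{\lambda_c/p}(E^\delta_{a,b}) \le \sqrt{\delta}$ holds whenever $\delta \le 1/C(a,b,p)^2$. Taking $\delta_0 := 1/C(a,b,p)^2$ completes the proof. There is no real obstacle here: the argument is entirely a routine Poisson first-moment calculation, and the only thing one needs to be mindful of is that the constants in each of the three bounds blow up as $p \to 0$ or $a,b \to \infty$, so $\delta_0$ must be taken to depend on $a,b,p$ (which is precisely what the lemma allows).
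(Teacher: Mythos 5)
Your proposal is correct and follows essentially the same route as the paper: the first inequality is exactly the coupling observation recorded before the lemma, and the second is obtained, as in the paper, by bounding each of the events $(a)$, $(b)$, $(c)$ separately via first-moment/union-bound estimates for the Poisson process of intensity $\lambda_c/p$, yielding a total bound $C(a,b,p)\,\delta$ which is at most $\sqrt{\delta}$ once $\delta$ is small enough. Your computation for $(b)$ (the annulus-area integral) and for $(c)$ is just a slightly more explicit version of the paper's sketch, so there is nothing further to add.
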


\begin{proof}
The first inequality holds by the observations above, since if $E^\delta_{a,b}$ does not occur, then the graphs defined by the points of $B$ and $\hat{B}$ are identical.

To bound $\PB_{\lambda_c/p} \big( E^\delta_{a,b} \big)$, we estimate the probabilities of $(a),$ $(b)$ and $(c).$
For property $(a)$, this is $O(\delta^2 a b / p^2)$, since each square has probability $O(\delta^4  / p^2)$ of containing at least two points of $B$, and there are $O(ab/\delta^2)$ such squares.

For property $(b)$, it is $O(\delta ab / p^2)$. Informally, the reason is that two points uniformly
distributed in $R_{a \times b}$ has probability of order $\delta/(ab)$ of falling within the right distance
of each other. Furthermore, since there are order $(\lambda_c ab/p)^2$ pairs of points, we arrive at the
claimed probability. It is not hard to make this argument precise.

For property $(c),$ a similar argument shows that the probability is $O(\delta ab)$.
\end{proof}

We can now easily deduce Proposition~\ref{strongvarBp} from Lemmas~\ref{gridvar} and~\ref{lem:disc}.

\begin{proof}[Proof of Proposition~\ref{strongvarBp}]
Let $p > 0$, and choose $a,b \ge 1$ sufficiently large and $\delta\in(0, p^2)$ small enough for Lemma~\ref{gridvar} to hold. Let $n = |\Lambda_{a,b}^\delta|$ and $q > 0$ be as described above. Then, by Lemmas~\ref{gridvar} and~\ref{lem:disc},
\begin{eqnarray*}
\VarB_{\lambda_c/p} \Big( \P^B_p\big(H ( \eta, R_{a \times b}, \bullet) \big) \Big) & \le & \Var^\Lambda_q\Big( \P^{\hat{B}}_p\big(H ( \hat{\eta}, R_{a\times b}, \bullet)\big)\Big) + \sqrt{\delta}\; = \; O\bigg( p \left( \log \frac{1}{p} \right)^2 \bigg).
\end{eqnarray*}
Since this is $o(1)$ as $p \to 0$, the proposition follows.
\end{proof}

Our second corollary of Theorem~\ref{varB} is Proposition~\ref{Bprop}. To save us from repeating the discretization, we shall deduce it from Proposition~\ref{strongvarBp}.

\begin{proof}[Proof of Proposition~\ref{Bprop}]
Let $t,\gamma > 0$, and recall that, by Theorem~\ref{nontriv}, we have
\begin{equation*}\label{themean}
c \, \le \, \ExB_{\lambda_c/p}\Big[ \Pr^B_p\big( H( \eta, R_{N \times tN}, \bullet) \big)\Big ]\, \le \, 1 - c
\end{equation*}
for some $c(t) > 0$. Moreover, by Proposition~\ref{strongvarBp}, there exists a constant $p_0=p_0(t,\gamma) > 0$ such that
\begin{equation*}\label{thevar}
\limsup_{N \to \infty} \, \VarB_{\lambda_c/p} \Big( \P^B_p\big(H ( \eta, R_{N\times tN}, \bullet)\big) \Big) \,<\, \frac{c^2\gamma }{4},
\end{equation*}
for every $0 < p < p_0$. Now, setting $c' = c/2$, we obtain
$$\PB_{\lambda_c/p}\Big( \Pr^B_p\Big( H\big( \eta, R_{N \times tN}, \bullet \big) \Big) \not\in(c',1-c') \Big) \, < \, \gamma,$$
for every sufficiently large $N \in \N$, by Chebychev's inequality, as required.
\end{proof}

\subsection{$(f^B_N)_{N \ge 1}$ is NS$_p$: Proof of Theorem~\ref{BpNS}}\label{ProofSubsec}

Our proof that the model $\PP^B_p$ is noise sensitive (for $\PB_{\lambda_c/p}$-almost every $B$) for all sufficiently small $p > 0$ is based on Theorem~\ref{algthm}, the deterministic algorithm method. We begin by defining the algorithm which we shall use; it is a straightforward adaptation of that used in~\cite{BKS} to prove noise sensitivity of crossings in bond percolation.

Recall that, given a finite set $B \subset R_{N+2}$, the function $f_{N}^{B} : \{0,1\}^{B}\to \{0,1\}$ is defined by
$$f^B_N(\eta) = 1 \quad \Leftrightarrow \quad H\big( \eta, R_N, \bullet \big) \textup{ occurs.}$$
The following `Water Algorithm' determines $f^B_N(\eta)$ for any finite set $B \subset R_{N+2}$, and any $\eta \in \{0,1\}^B$. The name of the algorithm is inspired by the following way of visualizing it: imagine pouring water into the left-hand side of $R_N$, and allowing it to fill every ball $D(x)$ which it meets such that $\eta(x) = 1$, i.e., for which $x \in \eta$. If water can flow to the other side of $R_N$, then the event $H(\eta,R_N,\bullet)$ holds.

\begin{alg}
Let $B \subset R_{N+2}$ and let $\eta \in \{0,1\}^B$. Let
$$A_0 \,:=\, \big\{ (x,y) \in \RR^2 : x = -N/2 \big\}$$
and $Q_0 := \emptyset$ denote the `active' and `queried' points at time zero. For each $k \in \N$, if $Q_{k-1}$ and $A_{k-1}$ have already been chosen, then define $Q_{k}$ and $A_{k}$ as follows:
\begin{enumerate}
\item[$1.$] Set $Q_k := D\big( D(A_{k-1}) \big) \cap B$, and query the elements of $Q_k$.\\[-2ex]
\item[$2.$] Let $A_k$ denote the set $x \in Q_k$ such that $\eta(x) = 1$. \\[-2ex]
\item[$3.$] If $A_k = A_{k-1}$, then stop, and set $A_\infty = A_k$, otherwise go to step 1. \\[-2ex]
\item[$4.$] If $H ( A_\infty, R_N, \bullet)$ holds, then output 1, otherwise output 0.
\end{enumerate}
Define the $*$-Water Algorithm $\A_W^*$ similarly, except with $A_0 := \big\{ (x,y) \in \RR^2 : x = N/2 \big\}$.
\end{alg}

To see that the Water Algorithm determines $f^B_N(\eta)$, simply note that an element $x \in B$ is queried if and only if there is a path from the left edge of $R_N$ to $D(x)$, using only points of $D(\eta) \cap R_{N+2}$. Thus, if $f_N^B(\eta) = 1$ then the algorithm will find a horizontal path across $R_N$ in $D(\eta) \cap R_N$; conversely, if $f_N^B(\eta) = 0$ then the algorithm will output zero, since $A_\infty \subset \eta$.



\smallskip
We shall apply the Water Algorithm for the vertices in the right-hand half of $R_{N+2}$, and the $*$-Water Algorithm for those in the left-hand half. 
Define
$$K^L_{N} := R_{N+2} \cap \Big( \big( -\infty, 0 \big) \times \RR \Big) \quad \text{and} \quad K^R_{N} := R_{N+2} \cap \Big( \big[0, \infty \big) \times \RR \Big),$$
and recall that for fixed $B \subset \RR^2$ and $v \in B \cap K^R_N$, 
\[
\delta_v(\A_W,p) \,=\, \Pr^B_p\Big( \A_W \textrm{ queries $v$ when determining } f_N^B(\eta)\Big)
\]
and $\delta_{B \cap K^R_N}(\A_W,p) = \ds\max_{v \in B \cap K^R_N} \delta_v(\A_W,p)$. 

The following lemma will allow us to apply Theorem~\ref{algthm}. 

\begin{lemma}\label{reveal:A}
For every $C > 0$, there exists $\delta > 0$ and $p_0=p_0(C)>0$ such that if $p\in(0,p_0)$, then
$$\PB_{\lambda_c/p}\Big( \delta_{B \cap K^R_N} \big( \A_W,p \big) > N^{-\delta} \Big) \, \le \, N^{-C}$$
for every sufficiently large $N \in \N$.
\end{lemma}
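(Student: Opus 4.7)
The plan is to implement the ``concentric annuli'' strategy sketched immediately before the lemma. For each $v \in R_{N+2}$, fix a family of $\ell_\infty$ square annuli $A^v_i := \{x \in \RR^2 : 4^i \le \|x-v\|_\infty \le 4^{i+1} - 2\}$, for $i = i_0, \ldots, i_1$, where $i_0$ is a sufficiently large absolute constant (chosen so that Proposition~\ref{Bprop} applies at the smallest scale) and $i_1 := \lfloor \log_4(N/8) \rfloor$, so that $k := i_1 - i_0 + 1 = \Theta(\log N)$ and the $1$-neighbourhoods of distinct $A^v_i$ are pairwise disjoint and contained in the ball of radius $N/4$ around $v$. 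Call $A^v_i$ \emph{good} (with respect to $B$) if each of the four overlapping long rectangles forming $A^v_i$ has $\Pr^B_p$-probability of vacant crossing at least $c'$, where $c' = c'(t_0)$ is the constant from Proposition~\ref{Bprop} for a fixed aspect ratio $t_0$; the FKG inequality then yields
$$\Pr^B_p\bigl(\text{there is a vacant circuit around }v\text{ in }A^v_i\bigr) \;\ge\; (c')^4 \;=:\; c_2.$$
By translation invariance of $\PB_{\lambda_c/p}$, a union bound over the four rectangles, and Proposition~\ref{Bprop}, for every $\gamma > 0$ there exists $p_0(\gamma) > 0$ such that whenever $p < p_0$, each $A^v_i$ is good with $\PB_{\lambda_c/p}$-probability at least $1 - 4\gamma$.

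The heart of the argument is a double independence statement. Since $B$ is Poisson and the $1$-thickenings of the $A^v_i$ are pairwise disjoint, the events $\{A^v_i \text{ is good}\}$ are mutually independent in $i$. Fixing $\gamma$ small (say $\gamma < 1/16$), Chernoff's inequality produces $c_3 > 0$ with
$$\PB_{\lambda_c/p}\bigl(\#\{i : A^v_i \text{ is bad}\} \ge k/2\bigr) \;\le\; e^{-c_3 k} \;=\; N^{-c_3 c_1},$$
where $k = c_1 \log N$. On the other hand, given any $B$ with at least $k/2$ good annuli around $v$, the vacant-circuit events in distinct good annuli are independent in $\eta$ (they are determined by disjoint subsets of $B$), and any such vacant circuit blocks the water from reaching $v$ through $D(\eta)$. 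Hence $\delta_v(\A_W, p) \le (1 - c_2)^{k/2} \le N^{-c_1 c_2/4}$ for all such $B$.

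It remains to union-bound over $v \in B \cap K^R_N$. By Slivnyak's theorem the event $E_v := \{\text{fewer than }k/2\text{ good annuli around }v\}$, depending only on $B \cap \bigcup_i A^v_i$ (which excludes $v$ itself), has the same probability conditional on $v \in B$ as unconditionally, and so Campbell's formula gives
$$\ExB_{\lambda_c/p}\bigl[\#\{v \in B \cap K^R_N : E_v\}\bigr] \;=\; \frac{\lambda_c}{p}\int_{K^R_N} \PB_{\lambda_c/p}(E_v)\,dv \;=\; O\!\left(\frac{N^{2 - c_3 c_1}}{p}\right).$$
Choosing $c_1$ large enough that $c_3 c_1 \ge C + 3$ makes this $o(N^{-C})$, and Markov's inequality yields $\PB_{\lambda_c/p}(\exists v \in B \cap K^R_N : E_v) \le N^{-C}$. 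Setting $\delta := c_1 c_2/4$ then gives the claimed bound, with $p_0(C)$ depending on $C$ only through the choice of $c_1$ (the parameters $t_0, \gamma, c_2, c_3$ being absolute). The main obstacle is the uniform application of Proposition~\ref{Bprop} over $\Theta(\log N)$ scales and every location $v$, handled by the absolute lower cutoff $i \ge i_0$ on scales and by translation invariance; the secondary technicality of coordinating the two independence statements (across annuli in $B$, and across good annuli in $\eta$ given $B$) is resolved cleanly by the disjointness of the thickened annuli.
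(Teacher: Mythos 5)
Your argument follows the same ``concentric annuli'' strategy as the paper's proof (Lemmas~\ref{reveal:v} and~\ref{reveal:A}): Proposition~\ref{Bprop} plus FKG to make each annulus good with $\PB_{\lambda_c/p}$-probability $\ge 1-4\gamma$, independence across the separated annuli, a binomial/Chernoff bound to get at least half of the $\Theta(\log N)$ annuli good except on an event of probability $N^{-C}$, and independence in $\eta$ of the vacant-circuit events in good annuli to conclude $\delta_v \le (1-c_2)^{k/2} \le N^{-\delta}$. The one genuine difference is the final union bound: the paper centres its annuli on the $(N+2)^2$ deterministic unit squares $S_i$ and union-bounds over those, whereas you centre them on the Poisson points themselves and invoke Slivnyak and Campbell's formula. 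Your variant is correct (the goodness events indeed avoid an arbitrarily small neighbourhood of $v$, so the Palm reduction is clean), at the cost of an extra factor $\lambda_c/p$ which is harmless since $p$ is fixed before $N\to\infty$.

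There is, however, one step that fails as written: the claim that you may ``choose $c_1$ large enough that $c_3c_1\ge C+3$'' while keeping $\gamma$, $c_2$, $c_3$ absolute. The constant $c_1$ in $k=c_1\log N$ is not a free parameter --- with $4$-adic annuli between a fixed scale $4^{i_0}$ and scale $N/8$ you get $k=\log_4 N+O(1)$, i.e.\ $c_1=1/\log 4$, and thinning the annuli to manufacture more scales would degrade the aspect ratio of the crossing rectangles and hence the constants $c'$ and $c_2$. With $\gamma$ fixed (say $\gamma<1/16$) your Chernoff exponent $c_3=\tfrac12\log\tfrac{1}{16\gamma}$ is a fixed constant, so $N^{2-c_3c_1}/p$ need not be $o(N^{-C})$ for large $C$. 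The correct mechanism --- and the one the paper uses, via the bound $2^t(4\gamma)^{t/4}\le N^{-C}$ ``if $\gamma=\gamma(C)$ is sufficiently small'' --- is to let $\gamma$, and therefore $p_0$, depend on $C$; this is exactly why the lemma is stated with $p_0=p_0(C)$. Your proof is repaired by replacing ``choose $c_1$ large'' with ``choose $\gamma=\gamma(C)$ small so that $c_3(\gamma)\,c_1\ge C+3$'', and correspondingly shrinking $p_0$; note that $\delta$ is unaffected, since $c_2=(c')^4$ does not depend on $\gamma$. A second, more minor point: the threshold $i_0$ above which Proposition~\ref{Bprop} applies may depend on $p$ (the proposition holds only for ``sufficiently large $N$'' after $p$ is fixed), so $i_0$ should be allowed to depend on $p$ --- this still gives $k=\log_4N+O_p(1)$ and changes nothing else; the paper sidesteps this by discarding all annuli of index below $t/4$.
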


In order to prove Lemma~\ref{reveal:A}, we first partition $R_{N+2}$ into $(N+2)^2$ squares of side-length~$1$ in the canonical way, and denote these squares by $S_1,\ldots,S_{(N+2)^2}$. Define $\AA_\ell$ to be the annulus centred at the origin, and consisting of all points with $\ell_\infty$-norm between $\ell$ and $2\ell$, and for each $1\le i \le (N+2)^2$, let $\AA_\ell(S_i)$ denote $\AA_\ell$ shifted to be concentric to $S_i$.
Let ${\C}\big( \AA_\ell(S_i),\eta \big)$ denote the (monotone decreasing) event that there is a loop of vacant space in $\AA_\ell(S_i)$; equivalently, it is the event that there is no path between the two faces of $\AA_\ell(S_i)$ using only points of~$D(\eta)$.

Now, consider the $t = \lfloor \log_4 (N / 4) \rfloor$ annuli $\AA_{\ell(1)}(S_i), \ldots, \AA_{\ell(t)}(S_i)$, where $\ell(j) = 4^j$. Note that the distance between $\AA_{\ell(j)}(S_i)$ and $\AA_{\ell(j+1)}(S_i)$ is at least 2 for each $j$, so the events $\C \big( \AA_{\ell(j)}(S_i),\eta)$ are independent.

The following lemma easily implies Lemma~\ref{reveal:A}.

\begin{lemma}\label{reveal:v}
For every $C > 0$, there exists $\delta > 0$ and $p_0=p_0(C)>0$ such that if $p\in(0,p_0)$, then for every $S_i$ that intersects $K_N^R$
\begin{equation}\label{propgives}
\PB_{\lambda_c/p}\Big( \delta_{B \cap S_i}\big( \A_W,p \big) > N^{-\delta} \Big) \; \le \; N^{- C}
\end{equation}
for every sufficiently large $N \in \N$.
\end{lemma}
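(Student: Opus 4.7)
Fix $C>0$. For each annulus $\AA_{\ell(j)}(S_i)$ I call the annulus \emph{good} (with respect to $B$) if
$$\Pr_p^B\big( \C(\AA_{\ell(j)}(S_i),\eta) \big) \;\ge\; c''$$
for an absolute constant $c''>0$ to be chosen below. The plan is to show that $(i)$ with $\PB_{\lambda_c/p}$-probability at least $1-N^{-C-1}$, at least $t/2$ of the $t$ annuli are good, and $(ii)$ on this event, $\delta_{B\cap S_i}(\A_W,p)\le N^{-\delta}$ for some $\delta=\delta(c'')>0$. Combining $(i)$ and $(ii)$ immediately yields \eqref{propgives}.

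For $(i)$, a vacant loop in $\AA_{\ell(j)}(S_i)$ is produced by vacant crossings of four long rectangles (say of dimensions $\ell(j)\times 3\ell(j)$) placed around the four cardinal sides of the annulus; FKG for the decreasing events $V(\,\cdot\,,\,\cdot\,,\circ)$ under the product measure $\Pr_p^B$ then bounds the loop probability by the product of the four crossing probabilities. By duality, $\Pr_p^B(V(\eta,R,\circ))=1-\Pr_p^B(H(\eta,R,\bullet))$, so Proposition~\ref{Bprop} applied to these rectangles gives, for any $\gamma>0$ and any $p<p_0(\gamma)$, an absolute lower bound $c'>0$ on each of the four $\Pr_p^B$-crossing probabilities, except on a $\PB_{\lambda_c/p}$-event of measure at most $\gamma/4$. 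A union bound then shows $\PB_{\lambda_c/p}(\AA_{\ell(j)}(S_i)\text{ is good})\ge 1-\gamma$ with $c'':=(c')^4$. (The $O(1)$ smallest annuli, and any rectangle sticking out of $R_{N+2}$ for $S_i$ near $\partial R_N$, may simply be discarded; this removes only $O(1)$ annuli and does not affect the count below.)

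The annuli $\AA_{\ell(1)}(S_i),\ldots,\AA_{\ell(t)}(S_i)$ are mutually $\ell_\infty$-separated by at least $2$, so their $1$-neighborhoods remain pairwise disjoint, and the indicator of `$\AA_{\ell(j)}(S_i)$ is good' depends only on the Poisson process $B$ inside such a $1$-neighborhood. By spatial independence of the Poisson process, the good-annulus events are therefore independent under $\PB_{\lambda_c/p}$, and Chernoff's inequality gives that the number of good annuli is at least $t/2$ except with probability at most $\exp(-c_1(\gamma)t)$, with $c_1(\gamma)\to\infty$ as $\gamma\to 0$. Since $t=\Theta(\log N)$, choosing $\gamma$ sufficiently small in terms of $C$ (and setting $p_0=p_0(C)$ accordingly) makes this probability at most $N^{-C-1}$. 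For part $(ii)$, condition on such a $B$. The vacant-loop events $\{\C(\AA_{\ell(j)}(S_i),\eta)\}_j$ are also independent under $\Pr_p^B$ (disjoint annuli use disjoint points of $B$), so the $\Pr_p^B$-probability that no good annulus contains a vacant loop is at most $(1-c'')^{t/2}\le N^{-\delta}$ for an appropriate $\delta=\delta(c'')>0$. Whenever some good annulus does contain a vacant loop, the loop topologically separates $S_i$ from the exterior of the annulus (hence from the left edge of $R_N$ where water is injected), so the Water Algorithm never reaches any $v\in B\cap S_i$ and $\delta_v(\A_W,p)\le N^{-\delta}$ uniformly in $v\in B\cap S_i$.

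The main obstacle will be verifying carefully the two independence statements -- especially that each `good' indicator really depends only on $B$ restricted to the enlarged annulus, so that disjointness yields genuine $\PB_{\lambda_c/p}$-independence -- together with the mild boundary issues for $S_i$ close to $\partial R_N$, where some of the large annuli must be truncated or discarded.
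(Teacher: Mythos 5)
Your proposal is correct and follows essentially the same route as the paper: bound the revealment by the product over the $\Theta(\log N)$ concentric annuli of the probabilities of no vacant loop, call an annulus good when the $\Pr^B_p$-probability of a vacant loop is at least $c''=(c')^4$ (via Proposition~\ref{Bprop}, duality and FKG over four rectangles), use spatial independence of the Poisson process across the well-separated annuli to show that at least $t/2$ annuli are good except on a $\PB_{\lambda_c/p}$-event of probability $N^{-C}$, and conclude with $(1-c'')^{t/2}\le N^{-\delta}$. The only cosmetic difference is that you invoke Chernoff for the independent good-annulus indicators where the paper writes the equivalent explicit bound $2^t(4\gamma)^{t/4}$, and you discard $O(1)$ small annuli where the paper discards the first $t/4$; both handle the same issue.
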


\begin{proof}[Proof of Lemma \ref{reveal:v}]
First note that, for a given $B$ and $S_i \subset K^R_N$, if the event $\C\big( \AA_{\ell(j)}(S_i),\eta \big)$ occurs for some $1 \le j \le t$, then no point $v \in B\cap S_i$ will be queried by the algorithm $\A_W$. This follows because $v \in K^R_N$, so $v$ is $\ell_\infty$-distance at least $N/2$ from the left edge of $R_N$. Thus, by independence,
\begin{equation}\label{eqdelta}
\delta_{B \cap S_i}\big( \A_W,p \big) \,\le\, \prod_{j = 1}^t \bigg( 1 \,-\, \Pr^B_p\Big(  \C\big( \AA_{\ell(j)}(S_i),\eta \big) \Big) \bigg).
\end{equation}
Next, by Proposition~\ref{Bprop} and the FKG inequality (together with the union bound) it follows that for any $\gamma > 0$, if $p\in(0,p_0)$, then 
\begin{equation}\label{Xprob}
\PB_{\lambda_c/p} \bigg(  \Pr^B_p \Big( \C\big( \AA_\ell(S_i),\eta \big) \Big) \ge c'' \bigg) \, \ge \, 1 - 4\gamma,
\end{equation}
for all sufficiently large $\ell \in \N$, where $c'' = (c')^4$ and $p_0=p_0(\gamma) > 0$ are constants given by Proposition~\ref{Bprop}.

Using~\eqref{Xprob}, we have that
\begin{equation}\label{fewX}
\PB_{\lambda_c/p} \left( \Big| \Big\{ j \ge t/4 \,:\, \Pr^B_p\Big( \C\big( \AA_{\ell(j)}(S_i),\eta \big) \Big) \ge c'' \Big\} \Big| \,\le\, \frac{t}{2} \right) \, \le \, 2^t (4\gamma)^{t/4} \, \le \, N^{-C},
\end{equation}
if $\gamma = \gamma(C)$ is sufficiently small, and $N$ (hence also $t$) is sufficiently large, as required.

Finally, if $B$ is such that $\Pr^B_p \big( \C\big( \AA_{\ell(j)}(S_i),\eta \big) \big) \ge c''$ for at least $t/2$ of the annuli $\AA_{\ell(j)}(S_i)$, then for this $B$,
\begin{equation}\label{manyX}
\prod_{j = 1}^t \bigg( 1 \,-\, \Pr^B_p\Big(  \C\big( \AA_{\ell(j)}(S_i),\eta \big) \Big) \bigg) \,\le \, \big( 1 - c'' \big)^{t/2} \, \le \, N^{-\delta},
\end{equation}
for some (small) $\delta > 0$.

Combining~\eqref{eqdelta},~\eqref{fewX} and~\eqref{manyX}, it follows that
\[
\PB_{\lambda_c/p}\Big( \delta_{B \cap S_i}\big( \A_W,p \big) \, > \, N^{-\delta} \Big) \,\le\, N^{-C},
\]
as required.
\end{proof}

It is now easy to deduce Lemma~\ref{reveal:A}.

\begin{proof}[Proof of Lemma~\ref{reveal:A}]
Since there are exactly $(N+2)^2/2 \le N^2$ squares $S_i$ in $K^R_N$, it follows immediately from Lemma~\ref{reveal:v}, and the union bound, that 
\[
\PB_{\lambda_c/p}\Big( \delta_{B \cap K^R_N}\big( \A_W,p \big) > N^{-\delta} \Big) \; \le \; N^{- C + 2},
\]
as required.
\end{proof}

We can now deduce Theorem~\ref{BpNS}.

\begin{proof}[Proof of Theorem~\ref{BpNS}]
We prove that for each sufficiently small $p>0$, the model $\PP^B_p$ is NS$_p$ for $\PB_{\lambda_c/p}$-almost every $B$. To do so we must prove that the sequence of crossing functions $(f^B_N)_{N\ge 1}$ is NS$_p$ for $\PB_{\lambda_c/p}$-almost every $B$, for all sufficiently small $p > 0$.

According to Lemma~\ref{reveal:A} and Borel-Cantelli, there are $\delta > 0$ and $p_0$ such that for every $p\in(0,p_0)$ we have
\begin{equation}\label{eq:finite}
\PB_{\lambda_c/p}\Big(\delta_{B\cap K_N^R}\big( \A_W,p \big) > N^{-\delta}\text{ for at most finitely many }N\Big)=1.
\end{equation}
By symmetry, it follows that also $\delta_{B\cap K_N^L}\big( \A^*_W,p \big) \ge N^{-\delta}$ for at most finitely many values of $N$, with probability one, and hence
$$
\Big( \delta_{B\cap K_N^R}(\A_W,p) + \delta_{B\cap K_N^L}\big( \A^*_W,p \big) \Big) \big( \log N \big)^6 \; \to \; 0
$$
as $N \to \infty$, for $\PB_{\lambda_c/p}$-almost every $B$. Since $f^B_N$ is monotone, we may apply Theorem~\ref{algthm}, which implies that $(f^B_N)_{N \ge 1}$ is NS$_p$ with probability one, as required.
\end{proof}

\subsection{The Poisson Boolean model is noise sensitive}

We are finally ready to deduce Theorem~\ref{noise}; as we remarked in Section~\ref{sketchsec}, it follows easily from Theorem~\ref{BpNS} and Proposition~\ref{varBp}. The key observation is that the following two constructions are equivalent:
\begin{enumerate}
\item[$(a)$] Pick $\eta$ according to the measure $\PB_{\lambda_c}$, construct $\eta^\eps$ by deleting each element of $\eta$ independently with probability $\eps$, and add a new independent configuration picked according to the measure $\PB_{\eps \lambda_c}$. We consider the pair $(\eta,\eta^\eps)$.
\item[$(b)$] Pick $B$ according to the measure $\PB_{\lambda_c/p}$, and let $\eta$ be a $p$-subset of $B$.  Setting $\eps' = \eps/(1-p)$, construct $\eta^{\eps'}$ by re-randomizing the second step with probability $\eps'$, independently, for every $v \in B$. We consider the pair $(\eta,\eta^{\eps'})$.
\end{enumerate}
It is easy to see that picking $(\eta,\eta^\eps)$ according to the first construction is equivalent to picking $(\eta,\eta^{\eps'})$ according to the second construction.

Now, recall that $f^G_N$ is the function, defined on subsets $\eta$ of the plane $\RR^2$, which encodes whether or not there is a horizontal crossing of $R_N$ in the occupied space $D(\eta) \cap R_N$. To prove the noise sensitivity of the Poisson Boolean model we must prove that for every $\eps>0$
\begin{equation}\label{aim}
\lim_{N \to \infty}  \ExB_{\lambda_c} \big[ f^G_N(\eta) f^G_N ( \eta^{\eps}) \big] - \ExB_{\lambda_c} \big[ f^G_N(\eta) \big]^2=0,
\end{equation}
where the pair $(\eta,\eta^\eps)$ is obtained by the first construction above.

\begin{proof}[Proof of Theorem~\ref{noise}]
Fix $\eps>0$. We have, by the observations above,
$$
\ExB_{\lambda_c} \big[ f^G_N(\eta) f^G_N ( \eta^{\eps}) \big] - \ExB_{\lambda_c} \big[ f^G_N(\eta) \big]^2 \;  = \; \ExB_{\lambda_c/p} \Big[ \Ex^B_p\big[ f^B_N(\eta) f^B_N (\eta^{\eps'}) \big] \Big] \,-\,  \ExB_{\lambda_c/p} \Big[ \Ex^B_p\big[ f^B_N (\eta) \big] \Big]^2
$$
\begin{equation}\label{final}
\hspace{2.2cm} = \; \ExB_{\lambda_c/p}  \Big[ \Ex^B_p\big[ f^B_N (\eta) f^B_N (\eta^{\eps'}) \big] - \Ex^B_p\big[ f^B_N (\eta) \big]^2 \Big]  \,+\, \VarB_{\lambda_c/p} \Big( \Ex^B_p\big[ f^B_N (\eta) \big] \Big).
\end{equation}
Proposition~\ref{varBp} shows that the second term in~\eqref{final} can be made arbitrarily small by taking $p > 0$ sufficiently small and $N$ sufficiently large. Theorem~\ref{BpNS} implies in turn that for small (but fixed) $p>0$, the first term converges to $0$ as $N\to \infty$. Since~\eqref{final} holds for every $p > 0$, it follows that the limit~\eqref{aim} holds, and, since $\eps>0$ was arbitrary, the Poisson Boolean model is noise sensitive at criticality, as claimed.
\end{proof}

We observe that in the extremal case when $\eps=1-p$, \eqref{final} reduces to
$$
\ExB_{\lambda_c} \big[ f^G_N(\eta) f^G_N ( \eta^{1-p}) \big] - \ExB_{\lambda_c} \big[ f^G_N(\eta) \big]^2 \;  = \; \VarB_{\lambda_c/p} \Big( \Ex^B_p\big[ f^B_N (\eta) \big] \Big).
$$
Thus, as an immediate consequence of Theorem~\ref{noise} we obtain the following strengthening of Proposition~\ref{varBp}.

\begin{cor}\label{cor:noise}
For every $p\in(0,1)$
$$
\lim_{N \to \infty} \, \VarB_{\lambda_c/p} \Big( \P^B_p\big(H ( \eta, R_N, \bullet)\big) \Big)\, =\, 0.
$$
\end{cor}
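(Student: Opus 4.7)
The plan is to obtain Corollary~\ref{cor:noise} as an immediate consequence of Theorem~\ref{noise} applied at the extremal noise level $\eps = 1-p$, exploiting precisely the decomposition~\eqref{final} that the paper has already established. Fix $p \in (0,1)$. Since $1-p \in (0,1)$, the value $\eps = 1-p$ is a legitimate noise level in Definition~\ref{def:NScts}, and the corresponding re-randomization parameter for the inner (discrete) process is $\eps' = \eps/(1-p) = 1$.

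The first step is to observe that when $\eps' = 1$, every coordinate of the $p$-subset $\eta \subset B$ is independently re-sampled, so $\eta^{\eps'}$ is (conditionally on $B$) an independent $p$-subset of $B$. Consequently $f^B_N(\eta)$ and $f^B_N(\eta^{\eps'})$ are independent under $\P^B_p$, which gives
\[
\Ex^B_p\big[f^B_N(\eta) f^B_N(\eta^{\eps'})\big] \;=\; \Ex^B_p\big[f^B_N(\eta)\big]^2
\]
for every such $B$. Hence the first term on the right-hand side of~\eqref{final} vanishes identically, and~\eqref{final} collapses to
\[
\ExB_{\lambda_c}\big[f^G_N(\eta) f^G_N(\eta^{1-p})\big] - \ExB_{\lambda_c}\big[f^G_N(\eta)\big]^2 \;=\; \VarB_{\lambda_c/p}\Big(\Ex^B_p\big[f^B_N(\eta)\big]\Big),
\]
which is the displayed identity recorded immediately before the statement of the corollary.

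Finally, by Theorem~\ref{noise} the Poisson Boolean model is noise sensitive at criticality, so the left-hand side above tends to $0$ as $N \to \infty$ (noise sensitivity holds for every $\eps > 0$, in particular for $\eps = 1-p$). The right-hand side therefore also tends to $0$. Since $p \in (0,1)$ was arbitrary, the corollary follows. There is no substantive obstacle to surmount here: the corollary is essentially a reformulation of a special case of Theorem~\ref{noise}, and the only verification required is the (elementary) fact that $\eps' = 1$ makes $\eta$ and $\eta^{\eps'}$ conditionally independent, which eliminates the first term of~\eqref{final}.
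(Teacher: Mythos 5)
Your proposal is correct and is essentially identical to the paper's own argument: the paper likewise sets $\eps=1-p$, notes that \eqref{final} then reduces to the displayed identity, and invokes Theorem~\ref{noise}. You merely make explicit the (correct) reason the first term of \eqref{final} vanishes, namely that $\eps'=1$ makes $\eta$ and $\eta^{\eps'}$ conditionally independent given $B$.
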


Apart from its independent interest, we shall use Corollary~\ref{cor:noise} in the next section to obtain a quantitative bound on the noise sensitivity exponent associated with the Poisson Boolean model.

\section{Quantitative noise sensitivity}\label{sec:QNS}

Several years after the introduction of noise sensitivity in~\cite{BKS}, an important breakthrough was obtained by Schramm and Steif~\cite{SS}, who established a direct connection between revealment and the Fourier spectrum of a function via the use of randomized algorithms\footnote{In a randomized algorithm, the bit to be queried next is chosen according to a probability distribution that is allowed to depend on the information received so far.}. Recall Lemma~\ref{lem:NSequiv}, which shows that a sufficiently strong bound on the Fourier coefficients is enough to deduce noise sensitivity. We here outline how the `randomized algorithm' method of~\cite{SS} can be used to prove Corollary~\ref{cor:QNS}, and hence strengthen Theorem~\ref{noise}.

Let $p = 1/2$, and observe that, by Corollary~\ref{cor:noise}, the final term in the right-hand side of~\eqref{final} vanishes as $N \to \infty$, and does not depend on $\eps$. As a consequence, in order to prove Corollary~\ref{cor:QNS} it suffices to show that  there exists $\alpha > 0$ such that, with $\eps(N)=N^{-\alpha}$,
\begin{equation}\label{eq:QNSinPP}
\lim_{N\to\infty}\Ex^B_{1/2}\big[f_N^B(\eta)f_N^B(\eta^{\eps(N)})\big]-\Ex^B_{1/2}\big[f_N^B(\eta)\big]^2=0,\quad\text{for $\PB_{2\lambda_c}$-almost every $B$}.
\end{equation}
In other words, we need to prove that the noise sensitivity exponent for the $\PP_{1/2}^B$ model is (a.s.) bounded away from zero. In particular, this means that we do not need to extend the approach of~\cite{SS} from the uniform case to the biased setting; Theorem~\ref{noise} (via Corollary~\ref{cor:noise}) does the job for us.

The randomized algorithm method is based on the following relation between Fourier coefficients of a function and the revealment of an algorithm.

\begin{prop}[Schramm and Steif~\cite{SS}]\label{prop:SS}
Let $f:\{0,1\}^n\to\RR$ be a function and let $\A$ be a randomized algorithm determining $f$. For every $k\in[n]$ we have
$$
\sum_{|S|=k}\hat{f}(S)^2 \, \le \, k \cdot \| f \|^2_2 \cdot \delta_{[n]}(\A,1/2),
$$
where $\|f\|_2$ denotes the $L^2$-norm of $f$ with respect to uniform measure.
\end{prop}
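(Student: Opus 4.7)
The plan is to follow Schramm and Steif~\cite{SS}, whose argument rests on a clean Fourier representation of the algorithm's query set. I would first reduce to a deterministic algorithm: by conditioning on the internal randomness of $\A$ and averaging the resulting inequality, it suffices to treat a deterministic decision tree. Identifying $\A$ with such a tree, each leaf $v$ carries a fixed query set $J_v\subset[n]$ and a partial assignment $\omega^v\in\{0,1\}^{J_v}$; the cylinders $C_v=\{\omega:\omega|_{J_v}=\omega^v\}$ partition $\{0,1\}^n$, and $f$ is constant on each $C_v$. I write $J(\omega)$ for the query set on input $\omega$.

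The decisive identity is
\[
\hat{f}(S)\;=\;\E\bigl[f(\omega)\,\chi_S(\omega)\,\mathbf{1}\{S\subset J(\omega)\}\bigr]\qquad\text{for every }S\neq\emptyset,
\]
which I would prove by expanding $\hat{f}(S)=\sum_v f(v)\,\E[\chi_S\,\mathbf{1}_{C_v}]$: on any cylinder $C_v$ with $S\not\subset J_v$ some coordinate $\omega_i$ with $i\in S$ is free to vary, and $\E[\chi_i]=0$ then kills the contribution. By the same cancellation, for any single $i\in S$ one may replace $\mathbf{1}\{S\subset J\}$ by $\mathbf{1}\{i\in J\}$ without changing the expectation. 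Cauchy--Schwarz then yields
\[
\hat{f}(S)^2\;\le\;\E\bigl[f^2\,\mathbf{1}\{i\in J\}\bigr]\cdot\delta_i\qquad\text{for each }i\in S,
\]
so in particular $\hat{f}(S)^2\le\delta\,\E[f^2\mathbf{1}\{i\in J\}]$, where $\delta=\delta_{[n]}(\A,1/2)$.

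To combine these per-coordinate inequalities into the stated sum bound, I would write $\sum_{|S|=k}\hat{f}(S)^2=\E[fG]$ where $G(\omega)=\sum_{|S|=k}\hat{f}(S)\chi_S(\omega)\mathbf{1}\{S\subset J(\omega)\}$, apply Cauchy--Schwarz in the form $\|f^{=k}\|_2^2\le\|f\|_2\|G\|_2$, and then bound $\|G\|_2^2$ by a leaf-by-leaf estimate exploiting the decision-tree structure. The main obstacle is precisely this combinatorial step: naive estimates such as $\P(S\subset J)\le\delta$ produce a spurious factor of $\binom{n}{k}$ or $\E[\binom{|J|}{k}]$, both of which destroy the inequality, so one must carefully use the cancellations built into $\chi_S\mathbf{1}\{S\subset J\}$ (as in~\cite{SS}) to extract exactly one factor of $k$ on the right-hand side. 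Finally, averaging back over the internal randomness of $\A$ recovers the general randomized case.
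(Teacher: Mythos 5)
This proposition is not proved in the paper at all --- it is quoted from Schramm and Steif~\cite{SS} and used as a black box in Section~\ref{sec:QNS} --- so your attempt has to be measured against the argument in~\cite{SS}. Up to a point your skeleton matches it: the identity $\hat{f}(S)=\E\big[f(\omega)\chi_S(\omega)\1\{S\subset J\}\big]$, its per-coordinate consequence $\hat{f}(S)^2\le\delta_i\,\E[f^2\1\{i\in J\}]$, and the plan of writing $\sum_{|S|=k}\hat{f}(S)^2=\E[f\,G]$ with $G=\E[f^{=k}\mid\F]$ (where $\F$ is the sigma-field generated by the algorithm's run) and applying Cauchy--Schwarz are indeed the opening moves of~\cite{SS}. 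But the proof is not there, for two reasons.

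First, the opening reduction to deterministic decision trees is invalid. Conditioning on the internal randomness $\tau$ and averaging the deterministic bound produces $\E_\tau\big[\max_j\delta_j(\A_\tau)\big]$ on the right-hand side, whereas the quantity in the statement is $\delta_{[n]}(\A,1/2)=\max_j\E_\tau\big[\delta_j(\A_\tau)\big]$; since the expectation of a maximum dominates the maximum of expectations, the averaging goes the wrong way. This is not a technicality: the whole point of allowing randomized algorithms (and of the application in Section~\ref{sec:QNS}, where the algorithm chooses a uniformly random starting point) is that randomization can make every $\delta_j$ small even though each fixed decision tree must reveal some bit with large probability. Schramm and Steif work directly with the filtration of the randomized algorithm. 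Second, and more seriously, the decisive estimate is missing. Everything you establish is the routine part; the theorem is equivalent to the inequality $\E\big[\E[g\mid\F]^2\big]\le k\,\delta_{[n]}(\A,1/2)\,\|g\|_2^2$ for $g=f^{=k}$, after which $\|g\|_2^2=\E[fg]=\E\big[f\,\E[g\mid\F]\big]\le\|f\|_2\,\big\|\E[g\mid\F]\big\|_2$ closes the argument. You correctly observe that naive bounds on $\|G\|_2^2$ lose a factor of $\binom{n}{k}$, and then defer the correct bound to ``the cancellations in~\cite{SS}''. That bound \emph{is} the theorem: in~\cite{SS} it is obtained by decomposing $\E[g\mid\F]$ as a martingale along the random order in which the algorithm examines the bits and carefully controlling the second moments of the increments, which is where the single factors of $k$ and of the revealment emerge. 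A write-up that stops just before this step has identified the strategy but not proved the proposition.
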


We continue with a rough sketch of how Proposition~\ref{prop:SS} can be used to deduce~\eqref{eq:QNSinPP}, which in turn implies Corollary~\ref{cor:QNS}. We proceed as follows:
\begin{enumerate}[\qquad 1.]
\item First define a suitable algorithm $\A$ to be used; in fact, it is straightforward to adapt the algorithm which was used in~\cite{SS} to the continuum setting. Roughly speaking, the algorithm chooses a starting point uniformly at random from the middle third of the left-hand side of the square $R_N$, and then investigates (in two stages) whether or not there is an occupied crossing of $R_N$ originating above the starting point, and whether or not there is one originating below it.
\item Partition $R_{N+2}$ into $n = (N+2)^2$ unit squares $S_1, \ldots, S_n$, and observe that if the algorithm is to examine a point in $B\cap S_i$, then the exploration path from the starting point must reach $S_i$. The probability that this occurs can be bounded by the probability of a one-arm event originating from the square $S_i$. (Some care needs to be taken with squares within distance, say, $N^{1/2}$ of the boundary of $R_N$, but this is easily adjusted for.)
\item For small $p>0$, an estimate on the probability of the one-arm event was obtained in Lemma~\ref{reveal:v}, based on Proposition~\ref{Bprop}. To estimate $\delta_{[n]}(\A,1/2)$, we need to extend this statement to cover also $p=1/2$. Note that Corollary~\ref{cor:noise} together with Chebychev's inequality gives a statement very similar to Proposition~\ref{Bprop} (valid for all $p\in(0,1)$), but for crossings of the square $R_N$ instead of the rectangle $R_{N\times tN}$. Perhaps the easiest way to convince oneself that the statement holds also for rectangles, is to observe that there is nothing special about the choice of a square in our study of the sensitivity of crossings. Indeed, one may easily verify that the proof of Theorem~\ref{noise} goes through (practically word for word)  in the case where $R_N$ is replaced by $R_{N\times tN}$. Consequently, Corollary~\ref{cor:noise} also holds for rectangles, and hence for every $t > 0$ there exists a constant $c = c(t) > 0$ such that, for every $p \in (0,1)$,
$$
\lim_{N\to\infty}\PB_{\lambda_c/p}\Big(\Pr_p^B\big(H(\eta,R_{N\times tN},\bullet)\big)\not\in(c,1-c)\Big)=0.
$$
A bound on the one-arm event centred around a unit square $S_i$ is now easily obtained for $p=1/2$, exactly as in Lemma~\ref{reveal:v}.
\item Using the union bound and the Borel-Cantelli lemma, we conclude (exactly as in Section~\ref{ProofSubsec}) that there exists $\delta > 0$ such that, for all but finitely many $N$, the following holds for $\PB_{2\lambda_c}$-almost every $B$: given any unit square $S$ in $R_N$, the probability (in $\Pr_{1/2}^B$) of the one-arm event centred around $S$ is at most $N^{-\delta}$.
\item An upper bound on the revealment of the algorithm was in step~2 argued to be given by the (maximal) probability of the one-arm event around a unit square $S_i$. It follows that there exists $\delta>0$ such that
$$
\PB_{2\lambda_c}\Big(\delta_{B\cap R_{N+2}}(\A,1/2)>N^{-\delta}\text{ for at most finitely many }N\Big)=1.
$$
\item Via Proposition~\ref{prop:SS}, it follows that for each $\alpha<\delta/2$
\begin{equation}\label{eq:QFourier}
\sum_{0<|S|\le N^{\alpha}}\hat{f}^B_N(S)^2\;\le\;\sum_{0<k\le N^{\alpha}}k \cdot \| f^B_N \|^2_2 \cdot \delta_{[n]}(\A,1/2)\;\le\; N^{2\alpha}\cdot N^{-\delta} \;\to\;0,
\end{equation}
for $\PB_{2\lambda_c}$-almost every $B$.
\item As is easily verified (see the proof of Lemma~\ref{lem:NSequiv}), the noise sensitivity exponent for a sequence of Boolean functions $(f_n)_{n\ge1}$ is at least $\alpha$ if $\lim_{n\to\infty}\sum_{0<|S|\le n^{\alpha}}\hat{f}_n(S)^2=0$. Consequently,~\eqref{eq:QFourier} implies that the noise sensitivity exponent for the sequence $(f_N^B)_{N\ge1}$ is at least $\delta/2$, for $\PB_{2\lambda_c}$-almost every $B$. Thus,~\eqref{eq:QNSinPP} holds.
\end{enumerate}
\smallskip
By the observations above, this completes (the sketch of) the proof of Corollary~\ref{cor:QNS}.

\section{Open problems}\label{opensec}

In this paper we have laid out a fairly general approach to the problem of proving noise sensitivity in models of Continuum Percolation, and we expect that our method could be extended to prove similar results in more general settings. In this section we shall state a few of these open problems.

\subsection{More general Poisson Boolean models}

The simplest extension of Theorem~\ref{noise} would be to the Poisson Boolean model with (bounded) random radii. To obtain this model, let $R > 0$ and fix an (arbitrary) distribution $\mu_R$ on $(0,R)$. Now let $\eta \subset \RR^2$ be chosen according to a Poisson point process, and place a disc of radius $r(x)$ on each vertex $x \in \eta$, where $r(x)$ is chosen according to $\mu_R$, independently for each vertex. There are various ways to perturb a configuration in this model. One may leave the positions of the points unaffected, but re-randomize some of the radii, or add and remove a small proportion of the balls, much like in this paper. We have foremost the latter choice in mind. Indeed, for this model the only missing ingredient is an RSW Theorem for the occupied space.

\begin{conj}
For every $R > 0$ and $\mu_R$, the Poisson Boolean model with random radii chosen according to $\mu_R$ is noise sensitive at criticality.
\end{conj}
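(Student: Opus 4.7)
The plan is to follow the architecture of the fixed-radius proof, isolating exactly what needs to be re-proved in the random-radii setting. Write a configuration as a marked point process, i.e.\ each point $x\in\eta$ carries an independent radius $r(x)$ drawn from $\mu_R$; let $D(\eta)=\bigcup_{x\in\eta}B(x,r(x))$. All three of the paper's general tools are radius-agnostic: Theorem~\ref{varB} is a statement about arbitrary hypergraphs on $\{0,1\}^n$; the BKS Theorem for product measure (Theorem~\ref{BKSp}) and the deterministic algorithm method (Theorem~\ref{algthm}) only see a discrete probability space. So the entire ``transfer'' scheme (two-step sampling, Propositions~\ref{varBp} and~\ref{Bprop}, equation~\eqref{final}) extends provided we can (i) perform an appropriate discretization and (ii) establish non-triviality of crossing probabilities at criticality.

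For (i), I would replace the vertex set $\Lambda_{a,b}^{\delta}$ of Section~\ref{T1sec} by a product grid $\Lambda_{a,b}^{\delta}\times\Sigma^{\delta}$, where $\Sigma^{\delta}\subset(0,R)$ is a $\delta$-net discretizing the radius distribution $\mu_R$. Partition $\Sigma^{\delta}$ into cells, and let each cell $C$ of area $\delta^{2}\mu_{R}(C)$ contribute a Bernoulli($q_{C}$) with $q_{C}=1-e^{-\lambda\delta^{2}\mu_{R}(C)/p}$; a point placed in such a cell is assigned the canonical representative radius. A bad event as in Lemma~\ref{lem:disc} now covers (a) two points in the same cell, (b) pairs whose distance is within $\delta$ of $r(x)+r(y)$ for some assigned radii, and (c) points within $\delta$ of $\partial R_{a\times b}$; since $R$ is bounded, all three probabilities are $O(\delta \cdot \text{poly}(a,b,p^{-1}))$. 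With this discretization, Theorem~\ref{varB} applied to the hypergraph encoding the crossing event yields the analogues of Proposition~\ref{varBp} and Proposition~\ref{Bprop}, with the latter dependent on an input non-triviality bound.

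For (ii), this is the main (and, as the paper observes, \emph{only}) obstruction: we need the analogue of Theorem~\ref{nontriv} for the occupied space under $\mu_R$. The vacant RSW Theorem of Roy is already stated for arbitrary distributions on $(0,r)$. The missing ingredient is an occupied RSW theorem for random radii, which would let one run Alexander's argument verbatim to obtain the analogue of Theorem~\ref{thm:Alex}, and hence non-triviality at $\lambda_c$. Under the assumption that such an RSW theorem holds (or in concrete cases where it is known, e.g.\ radii bounded away from $0$), the remaining steps are mechanical: extend the Water Algorithm $\A_W$ so that querying a point $v\in B$ reveals both $\1_{v\in\eta}$ and its radius (which is already determined at the $B$-stage, so it is not a noised bit under the second step); the one-arm bound in Lemma~\ref{reveal:v} still follows from Proposition~\ref{Bprop} because the annuli $\AA_{\ell(j)}$ can be chosen of width $>2R$ so that the events $\C(\AA_{\ell(j)}(S_i),\eta)$ remain independent; and Theorem~\ref{algthm} then delivers noise sensitivity of $\PP^{B}_{p}$ for almost every $B$, which combined with the adapted Proposition~\ref{varBp} yields the conjecture via~\eqref{final}.

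The hard part, therefore, is purely percolation-theoretic rather than Fourier-analytic: proving an occupied RSW theorem in the random-radii setting. The difficulty in the classical argument is that an occupied crossing in a long rectangle need not decompose cleanly into occupied crossings in smaller sub-rectangles, because balls sticking out of the sub-rectangles can create or destroy connections; Alexander's treatment in~\cite{Alex} handles this via a careful ``envelope'' construction that relies on a fixed radius. Adapting that construction to handle a random $r(x)\in(0,R)$ (e.g.\ by conditioning on the radii, applying a quenched FKG, and then integrating over $\mu_R$, or by working with enlarged annuli of width $>2R$) is where the real work lies, and would constitute the substance of any complete proof of the conjecture.
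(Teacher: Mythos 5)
This statement is a \emph{conjecture} in the paper's open-problems section; the paper offers no proof of it, only the remark that ``for this model the only missing ingredient is an RSW Theorem for the occupied space.'' Your proposal is therefore not a proof either, and you say so yourself: everything hinges on an occupied RSW theorem for random radii, which you assume rather than establish. That assumption is precisely the open problem. A conditional reduction of a conjecture to another unproved statement, however plausible, cannot be counted as a proof of the conjecture, so there is a genuine gap --- indeed the \emph{entire} mathematical content that would be needed to settle the statement is missing.

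That said, your analysis of which parts of the machinery are radius-agnostic is accurate and coincides with the authors' own assessment. Theorem~\ref{varB}, Theorem~\ref{BKSp} and Theorem~\ref{algthm} are indeed purely combinatorial/Fourier-analytic statements about $\{0,1\}^n$ and see nothing of the geometry; the two-step sampling identity~\eqref{final} is likewise model-independent. Your proposed modifications --- a product discretization $\Lambda^{\delta}_{a,b}\times\Sigma^{\delta}$ over positions and radii, annuli of width exceeding $2R$ to preserve independence of the vacant-loop events in Lemma~\ref{reveal:v}, and having the Water Algorithm reveal the radius (fixed at the $B$-stage, hence not re-randomized) along with the occupation bit --- are all sensible and would likely go through once the non-triviality input (the analogue of Theorem~\ref{nontriv}, hence of Proposition~\ref{Bprop}) is available. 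But that input requires the occupied RSW theorem for random radii, which is exactly where Alexander's fixed-radius argument does not transfer verbatim, and which neither you nor the paper supplies. The conjecture remains open.
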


An alternative generalization would allow us to use an arbitrary shape $S$ instead of a disc. Given such an $S \subset \RR^2$, and a set $\eta$ chosen according to a Poisson point process, place a copy of $S$ on every point $x \in \eta$; that is, set $D(\eta) = \bigcup_{x \in \eta} x + S$.

\begin{conj}
If $S$ is bounded and simply connected, then the Poisson Boolean model for $S$ is noise sensitive at criticality.
\end{conj}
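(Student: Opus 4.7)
The plan is to mimic the two-stage framework developed for discs: decompose a Poisson configuration of intensity $\lambda_c = \lambda_c(S)$ as a $p$-subset $\eta$ of a Poisson configuration $B$ of intensity $\lambda_c/p$, and then establish the analogues of Theorem~\ref{BpNS} (noise sensitivity of $(f_N^B)_{N\ge 1}$ in the discrete model $\PP^B_p$ for $\PB_{\lambda_c/p}$-almost every $B$) and Proposition~\ref{varBp} (decay in $p$ of the variance of $\Ex_p^B[f_N^B(\eta)]$). The three main abstract tools of the paper---Theorem~\ref{BKSp}, Theorem~\ref{varB}, and Theorem~\ref{algthm}---are purely combinatorial statements about Boolean functions, hypergraphs, and product measure, and so carry over verbatim. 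The discretization argument of Section~\ref{T1sec} also adapts routinely: the lattice $\Lambda_{a,b}^\delta$ remains the same, but the ``bad event'' in Lemma~\ref{lem:disc} must be redefined so that the rounding map $\psi$ does not alter the connectivity structure of translates of $S$ (e.g.\ no two points of $B$ sit close to where translates of $S$ just barely touch, and no point sits within the $\delta$-fattened boundary of~$R_N$), which is a standard computation provided the boundary $\partial S$ has measure zero.

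The model-specific content reduces to two ingredients. First, one needs the analogue of Theorem~\ref{nontriv}: for every $t > 0$ there exists $c(t) > 0$ such that at $\lambda_c(S)$ the probability of a horizontal crossing of $R_{N\times tN}$ in $D(\eta)$ lies in $(c,1-c)$ uniformly in $N$. From this, Proposition~\ref{Bprop} follows exactly as before via Theorem~\ref{varB} and Chebyshev. Second, one needs to adapt the Water Algorithm: flood water from the left edge of $R_N$ and allow it to fill a translate $x + S$ whenever $x \in \eta$ and some point of $\partial(x + S)$ is already wet. The revealment bound of Lemma~\ref{reveal:v} then goes through by surrounding each unit square by $\Theta(\log N)$ disjoint concentric annuli whose inner dimensions exceed the diameter of $S$ (so that a vacant loop in such an annulus topologically separates its interior from its exterior in $D(\eta)$), and showing, via the analogue of Proposition~\ref{Bprop} applied to the vacant annulus-crossing event together with the FKG inequality, that a definite fraction of these annuli contain vacant loops with high $\PB_{\lambda_c/p}$-probability.

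The hard part will be the first ingredient, namely an RSW-type theorem for both the occupied space $D(\eta)$ and the vacant space $\RR^2 \setminus D(\eta)$ under the Poisson Boolean model with shape $S$. Roy's vacant RSW theorem and Alexander's occupied RSW theorem exploit heavily the rotational symmetry of the disc and the precise geometry of ball intersections; for a general bounded simply connected $S$ neither symmetry is available. A plausible route is to combine FKG with a sprinkling argument (adding an extra independent Poisson configuration of small intensity to ``glue'' crossings), or alternatively to approximate $S$ from below by a finite union of small discs, couple the resulting Boolean model to the discs model, and then import the RSW bounds; a more robust approach would be to adapt recent lattice RSW machinery via the fine discretization used in Section~\ref{T1sec}. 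A further subtlety for non-convex $S$ is that the duality between occupied and vacant crossings, which is almost automatic for discs, becomes delicate: one must track the topology of $\RR^2 \setminus D(\eta)$ carefully to ensure that the absence of an occupied horizontal crossing in fact produces a vacant vertical one.

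Once the RSW bounds are in hand, the rest of Section~\ref{nontrivsec}---in particular the equivalences of Theorem~\ref{thm:Alex} together with its extension Theorem~\ref{Alex:ext}---should go through by essentially the same Peierls-type and continuity-of-$\lambda$ arguments as in the disc case, yielding the non-triviality needed for Proposition~\ref{Bprop}. With Proposition~\ref{Bprop} and the adapted Water Algorithm in place, the proofs of Theorems~\ref{BpNS} and~\ref{noise} then apply word for word to conclude noise sensitivity at criticality for the Boolean model with shape $S$.
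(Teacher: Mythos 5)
The statement you are addressing is not a theorem of the paper but an open conjecture stated in Section~\ref{opensec}; the paper offers no proof of it, and indeed explicitly identifies the missing ingredient. Your proposal is an accurate roadmap of what such a proof would require, but it is not a proof: the entire argument hinges on the first ``model-specific ingredient,'' namely an RSW-type theorem for both the occupied space $D(\eta)$ and the vacant space $\RR^2\setminus D(\eta)$ when discs are replaced by translates of an arbitrary bounded, simply connected $S$, and you do not establish it. You candidly list three ``plausible routes'' (sprinkling, approximation by unions of discs, importing lattice RSW machinery through the discretization), but none is carried out, and each faces real obstacles: Roy's and Alexander's proofs use the rotational symmetry of the disc and the geometry of disc intersections in an essential way; approximating $S$ by discs changes the critical intensity and does not obviously transfer crossing estimates \emph{at criticality}; and a sprinkling argument perturbs the intensity away from $\lambda_c(S)$, which is exactly what one cannot afford when the goal is a statement at the critical point. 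Without the analogue of Theorem~\ref{nontriv}, the analogue of Proposition~\ref{Bprop} has no input, the one-arm estimate behind Lemma~\ref{reveal:v} fails, and the whole chain Theorem~\ref{algthm} $\Rightarrow$ Theorem~\ref{BpNS} $\Rightarrow$ Theorem~\ref{noise} never gets started.

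A secondary gap is the occupied/vacant duality you flag for non-convex $S$. For discs, the complement of an occupied horizontal crossing of a rectangle contains a vacant vertical crossing by an essentially topological argument, and this duality is used repeatedly in Section~\ref{nontrivsec} (e.g.\ in~\eqref{eq:upperassumption} and in the partition argument inside the proof of Theorem~\ref{Alex:ext}). For a general simply connected $S$ this step is ``delicate,'' as you say, but acknowledging delicacy is not the same as resolving it; the Peierls-type and continuity arguments of Theorem~\ref{thm:Alex} and Corollary~\ref{Cor:Alex} cannot be declared to ``go through by essentially the same arguments'' until the duality and the RSW inputs are actually in place. The parts of your plan that genuinely do carry over verbatim --- Theorems~\ref{BKSp}, \ref{varB} and~\ref{algthm}, and the discretization of Section~\ref{T1sec} with a suitably enlarged bad event --- are correctly identified, but they are precisely the parts the paper already proves in model-independent generality. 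What remains is the hard, model-specific core, and that is exactly why the statement is a conjecture.
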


Of course, one could construct a much more general model, in which a random shape (of random size) is placed on each point in $\eta$. We suspect that such a model will also exhibit the same behaviour. The corresponding question for higher dimensions is likely to be much harder.

\begin{qu}
Is the Poisson Boolean model noise sensitive at criticality in $d$ dimensions?
\end{qu}

\subsection{Voronoi percolation}

Given a set $\eta \subset \RR^2$, the Voronoi tiling of $\eta$ (see~\cite{BR}, for example) is constructed by associating each point of $\RR^2$ with the point of $\eta$ closest to it. We call the set of points associated to $x \in \eta$ in this way the Voronoi cell of $x$. In Voronoi percolation we choose a random subset of the cells, by colouring each blue with probability $p$, and say that the model percolates if there exists an unbounded component of blue space. Bollob\'as and Riordan~\cite{BRvor} proved that if $\eta$ is chosen according to $\PB_{\lambda}$, then this model has critical probability $1/2$.

Given a Voronoi tiling $V$ of $\RR^2$, let $f_N^V \colon \{0,1\}^{V_N} \to \{0,1\}$ be the function which encodes crossings of $R_N$, where $V_N$ denotes the collection of cells which intersect $R_N$. Say that Voronoi percolation is noise sensitive at criticality if $(f_N^V)_{N \ge 1}$ is NS, almost surely (in $\PB_{\lambda}$).

Benjamini, Kalai and Schramm~\cite[Section~5]{BKS} asked whether knowing the Voronoi tiling, but not the colouring, gives (a.s.) any information as to whether or not there exists a blue horizontal crossing of $R_N$. We make the following, complementary conjecture.

\begin{conj}
Voronoi percolation is noise sensitive at criticality.
\end{conj}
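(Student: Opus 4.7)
The plan is to follow the two-stage strategy used in the proof of Theorem~\ref{noise}, with the outer randomness being the choice of the Voronoi tiling $V$ generated by the Poisson point process $\eta \sim \PB_\lambda$, and the inner randomness being the uniform colouring $\omega \in \{0,1\}^{V_N}$ of the cells that intersect $R_N$. Since $p_c = 1/2$ by Bollob\'as--Riordan~\cite{BRvor}, no biased version of BKS is needed: the $p = 1/2$ case of Theorem~\ref{algthm} applies directly, and the task reduces to exhibiting, for $\PB_\lambda$-almost every $V$, a sequence of deterministic algorithms determining $f_N^V$ whose revealment decays faster than $(\log N)^{-6}$.

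First I would adapt the Water Algorithm of Section~\ref{ProofSubsec} to the Voronoi setting: pour water in from the left edge of $R_N$, and iteratively reveal the colour of any cell adjacent to a cell already known to be wet and blue; use the symmetric procedure from the right edge for cells in the left half of $R_N$. Just as for the Poisson Boolean model, the probability that a fixed cell $c$ gets queried is bounded above by the probability of a monochromatic blue one-arm event from $c$ out to a distance comparable to the distance from $c$ to the nearer vertical edge of $R_N$, so it suffices to bound the one-arm probability at criticality.

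The natural approach to bounding this one-arm probability is the standard RSW--FKG argument: by the Voronoi RSW theorem of~\cite{BRvor} together with FKG and the non-triviality of crossing probabilities at $p = 1/2$, the probability of a vacant circuit in the tiling at any fixed scale around a fixed point is bounded below by a universal constant $c > 0$. Stacking $\Theta(\log N)$ disjoint annular scales then yields an \emph{annealed} one-arm bound of the form $N^{-\delta}$ (averaged over both $V$ and $\omega$), for some $\delta > 0$.

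The hard part will be the passage from this annealed statement to an almost-sure (quenched) statement over the tiling -- precisely the role played by Theorem~\ref{varB} and Proposition~\ref{Bprop} in the present paper. The natural replacement would be a variance bound showing that for $\PB_\lambda$-most $V$, the conditional probability (over the uniform colouring of $V$) of a vacant circuit around a fixed region at scale $\ell$ stays bounded below. Unlike the Poisson Boolean model, however, Voronoi cells at different scales are not independent functionals of the underlying point process, so Theorem~\ref{varB} does not apply off-the-shelf; one would need an analogous variance bound for crossing probabilities viewed as functionals of the Poisson process itself, perhaps by discretising the tiling on a fine grid and exploiting a generalisation of Bey's inequality. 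Once such a quenched RSW statement is in hand, a union bound over cells and scales, combined with Borel--Cantelli as in the proof of Lemma~\ref{reveal:A}, gives that almost surely the revealment is at most $N^{-\delta'}$ for some $\delta' > 0$, and the $p = 1/2$ case of Theorem~\ref{algthm} completes the argument.
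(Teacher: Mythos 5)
This statement is not proved in the paper: it appears in Section~\ref{opensec} as an open problem, stated as a conjecture precisely because the step you flag as ``the hard part'' was not known how to do. So there is no proof of the paper's to compare against, and your proposal should be judged as a research plan rather than a proof. As a plan it correctly mirrors the Gilbert-model strategy (algorithm method at $p=1/2$, one-arm bound via RSW and disjoint annuli, Borel--Cantelli), and you correctly identify that the missing ingredient is a \emph{quenched} RSW statement, i.e.\ concentration of the conditional crossing probability $\Pr(\,\cdot\mid V)$ over the tiling $V$. But the proposal contains a genuine gap at exactly that point, and the suggested repair does not work.

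The reason Theorem~\ref{varB} cannot be adapted ``off-the-shelf'' is structural, not merely technical. That theorem concerns $r_{\HH}(B,p)=\Pr^B_p(A\in\HH)$ for a hypergraph $\HH$ that is \emph{fixed in advance}, independently of $B$: in the Gilbert model, whether the discs centred at $A$ cross $R_N$ depends only on $A$, not on $B\setminus A$. In Voronoi percolation the event ``the blue cells cross $R_N$'' depends on the whole tiling, i.e.\ on all of $\eta$ and not just on the blue points, so there is no fixed hypergraph to which a Bey-type second-moment computation applies uniformly. Moreover, the bound in Theorem~\ref{varB} is $O\bigl(p(\log\tfrac1p)^2\bigr)$, which is useful only because in the Gilbert model one is free to split $\lambda_c=p\cdot(\lambda_c/p)$ with $p\to0$; in Voronoi percolation the colouring density is pinned at $p_c=1/2$, so even a formally analogous bound would give a constant, not something tending to zero. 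Thus your plan reduces the conjecture to the concentration of $\Pr^V_{1/2}(H(\omega,R_N,\text{blue}))$ around $1/2$ --- which is essentially the Benjamini--Kalai--Schramm question quoted just before the conjecture, itself open at the time of this paper --- rather than resolving it. (Both the quenched crossing concentration and the noise sensitivity of Voronoi percolation were established only in later work, by substantially different arguments.)
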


Alternatively, one could define noise sensitivity by resampling the Poisson point process, as well as the colouring; we expect the corresponding result to hold for this definition also.

\subsection{Stronger results for the Gilbert model}

In Corollary~\ref{cor:QNS} we show that it is possible to obtain quantitative estimates on how sensitive percolation crossings are to noise. Recently, \emph{very} strong results have been obtained by Schramm and Steif~\cite{SS} and Garban, Pete and Schramm~\cite{GPS} in the discrete case. In~\cite{SS} it was proven that the noise sensitivity exponents for bond percolation on the square lattice, and for site percolation on the triangular lattice, are positive. The latter was improved in~\cite{GPS} to show that the noise sensitivity exponent for the triangular lattice equals $3/4$. Such a precise result was possible to obtain due to the very precise information available on the decay rate of arms events. It would be interesting to determine the precise value of the noise sensitivity exponent in the continuum setting.



\begin{prob}\label{FS}
Determine for which $\alpha>0$ we with $\eps(N)=N^{-\alpha}$ have
\[
\lim_{N \to \infty} \ExB_{\lambda_c} \big[ f^G_N(\eta) f^G_{N}( \eta^{\eps(N)}) \big] - \ExB_{\lambda_c} \big[ f^G_N(\eta) \big]^2 \, =\, 0.
\]
\end{prob}

One possible application of a solution to Problem~\ref{FS} would be to Dynamical Continuum Percolation. To define this model, consider a set $\eta$ chosen according to a Poisson point process of intensity $\lambda_c$ in three dimensions (two space and one time), and suppose points of $\eta$ disappear at rate one. By Corollary~\ref{Cor:Alex}, at any given time there is (almost surely) no infinite component in $D(\eta)$; we therefore say that $t$ is an \emph{exceptional time} if there is an infinite component in $D(\eta)$ at time~$t$.

The following conjecture was proved for site percolation on the triangular lattice in~\cite{SS}, and for bond percolation on $\ZZ^2$ in~\cite{GPS}.

\begin{conj}
There exist exceptional times in Dynamical Continuum Percolation at criticality, almost surely.
\end{conj}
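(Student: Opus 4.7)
The plan is to adapt the second-moment strategy of Schramm and Steif~\cite{SS} to the continuum setting, using Corollary~\ref{cor:QNS} together with a one-arm analogue of the Water Algorithm as the essential ingredients. For each $N \in \N$ and $t \ge 0$ let $A_N(t)$ denote the event that, at time~$t$, the origin lies in a component of $D(\eta)$ of diameter at least~$N$, set $\pi(N) := \PB_{\lambda_c}(A_N(0))$, and define
\[
J_N \,:=\, \int_0^1 \mathbf{1}_{A_N(t)}\,\mathrm{d} t.
\]
By Fubini, $\ExB_{\lambda_c}[J_N] = \pi(N)$ and
\[
\ExB_{\lambda_c}\big[J_N^2\big] \,=\, 2 \int_0^1 (1-s)\, \PB_{\lambda_c}\big(A_N(0)\cap A_N(s)\big) \, \mathrm{d} s.
\]
A uniform bound $\ExB_{\lambda_c}[J_N^2] \le C \pi(N)^2$ gives, via Paley--Zygmund, $\PB_{\lambda_c}(J_N > 0) \ge 1/C$ for every $N$; since the events $\{J_N > 0\}$ are nested in $N$ and $[0,1]$ is compact, a diagonal extraction produces, with probability at least $1/C$, a random $t \in [0,1]$ at which $D(\eta)$ has an infinite component through the origin. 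Tiling the time axis by unit intervals and invoking a standard ergodicity-based zero--one law upgrades this to almost sure existence of exceptional times.

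The key observation is that $\eta$ at times $0$ and $s$ has the same joint law as the pair $(\eta,\eta^\varepsilon)$ of Definition~\ref{def:NScts} with $\varepsilon = 1 - e^{-s}$, since the rate-one deletions and rate-$\lambda_c$ space-time births precisely realize the $\varepsilon$-noise. The correlation $\PB_{\lambda_c}(A_N(0)\cap A_N(s))$ is therefore a noise-sensitivity quantity, and via the decomposition~\eqref{final} (applied to the indicator $g_N := \mathbf{1}_{A_N}$ in place of $f_N^G$) it reduces to bounding both $\VarB_{\lambda_c/p}\big(\Pr_p^B(A_N)\big)$ and, for $\PB_{\lambda_c/p}$-almost every $B$, the conditional spectral sum $\sum_{|S|=k} \hat{g}_N^{\,p}(S)^2$. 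The first is controlled by Theorem~\ref{varB} applied to the hypergraph encoding $A_N$. For the second, one modifies the Water Algorithm of Section~\ref{T1sec}: pour water outward from the origin through the occupied space, and query a vertex $v \in B$ only if water has reached its unit disc. A vertex $v$ is then queried only if $D(\eta)$ connects $v$ to the origin, so Proposition~\ref{Bprop} applied to $\Theta(\log N)$ disjoint annuli around $v$, exactly as in Lemma~\ref{reveal:v}, yields a polynomial revealment bound $N^{-\delta}$ for some $\delta > 0$. Proposition~\ref{prop:SS} then gives $\sum_{|S|=k} \hat{g}_N^{\,p}(S)^2 \le k\,\pi(N)\,N^{-\delta}$, and splitting this sum at $K \asymp N^\delta / \pi(N)$ before integrating in $s$ closes the loop.

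The main obstacle is the numerical tradeoff between the revealment exponent $\delta$ and the one-arm exponent $\rho := -\limsup_N \log \pi(N)/\log N$: the spectral argument outlined above requires $\delta > \rho$ after careful bookkeeping, while Corollary~\ref{cor:QNS} only guarantees $\delta > 0$ without quantifying it, and $\rho$ for the Gilbert model is not known. Two routes suggest themselves. The first, following Garban, Pete and Schramm~\cite{GPS}, is to sharpen the spectral analysis by exploiting quasi-multiplicativity of arm probabilities across disjoint annuli, which would also need to be established for the Poisson Boolean model. The second is to bypass the Fourier machinery entirely and prove a direct correlation estimate $\PB_{\lambda_c}(A_N(0)\cap A_N(s)) \le C \pi(N)^2 s^{-\gamma}$ with $\gamma < 1$, using FKG together with iterated applications of Theorem~\ref{nontriv} and the BK inequality; this would need only the qualitative decay $\pi(N) \to 0$, which is guaranteed by Corollary~\ref{Cor:Alex}. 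The second route appears closer in spirit to the present paper and should be the more accessible first step.
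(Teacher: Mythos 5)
First, a point of orientation: the statement you are proving is listed in Section~\ref{opensec} of the paper as an open \emph{conjecture}; the paper offers no proof of it, only the remark that the analogous results for $\TT$ and $\ZZ^2$ were obtained in~\cite{SS} and~\cite{GPS}. So the question is whether your proposal actually closes the problem, and it does not --- as you yourself half-concede in your final paragraph. The second-moment scheme requires the uniform bound $\ExB_{\lambda_c}[J_N^2]\le C\,\pi(N)^2$, which after the spectral decomposition amounts to $\sum_k \hat g_N(k)^2/k \le C\,\pi(N)^2$. Feeding the Schramm--Steif bound $\sum_{|S|=k}\hat g_N(S)^2\le k\,\pi(N)\,\delta_N$ into this and optimizing the cut-off gives roughly $\pi(N)\,\delta_N^{1/2}$, so you need $\delta_N^{1/2}\lesssim\pi(N)$, i.e.\ a \emph{quantitative} comparison between the revealment exponent $\delta$ and the one-arm exponent $\rho$. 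Corollary~\ref{cor:QNS} and the annulus argument of Lemma~\ref{reveal:v} only give \emph{some} $\delta>0$, and neither $\rho$ nor even its positivity in the required quantitative form is established for the Gilbert model. This is not bookkeeping; it is the entire content of~\cite{SS}, where the inequality $\delta>2\rho$ is verified on $\TT$ using the known value of the one-arm exponent. There is also a secondary gap you gloss over: for the radial (one-arm) function the revealment of points near the origin is close to $1$, so the flat bound $\delta_{[n]}(\A,1/2)\le N^{-\delta}$ is false for the natural outward-pouring algorithm; \cite{SS} handle this with a more refined, $k$-dependent spectral estimate, and that refinement would have to be redone here.

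Your proposed fallback --- a direct correlation estimate $\PB_{\lambda_c}\big(A_N(0)\cap A_N(s)\big)\le C\,\pi(N)^2 s^{-\gamma}$ via FKG, BK and Theorem~\ref{nontriv} --- does not work. FKG gives \emph{lower} bounds on correlations of increasing events, which is the wrong direction, and the BK inequality bounds probabilities of \emph{disjoint occurrence} within a single configuration, whereas $A_N(0)\cap A_N(s)$ concerns two correlated configurations and is in no useful sense a disjoint-occurrence event. If such an elementary decorrelation bound were available, exceptional times would follow without any Fourier analysis even in the discrete models, which is not the case. The honest summary is that your first two paragraphs correctly reproduce the known \emph{strategy} (and the reduction via~\eqref{final} and Theorem~\ref{varB} to the discrete model $\PP^B_p$ is exactly in the spirit of the paper), but the decisive quantitative input is missing and your suggested workaround is unsound; the statement remains open.
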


A related problem was studied by Benjamini and Schramm~\cite{BS2}. Alternatively, one might first choose the points of $\eta$ according to a Poisson point process of intensity $\lambda_c$, and then allow each to perform an independent Brownian motion (see also~\cite{BGS}); an interesting first step would be to prove a result corresponding to Theorem~\ref{noise} for this model.

\section*{Acknowledgements}

This research began during the Clay Mathematics Institute summer school on ``Probability and Statistical Physics in Two and more Dimensions" which took place in Buzios, Brazil in July, 2010. We would like to thank the organisers of that school, and the Clay Institute, for providing such a stimulating environment in which to work. We would also like to thank Yuval Peres for a useful conversation, and for encouraging us to work on the problem. The work was continued during a visit by D.~Ahlberg to IMPA, whom he thanks for their support and hospitality on several occasions. Finally, we would like to thank the two anonymous referees for their very careful and thorough reading of the paper.

\end{document}